\title[Generic regularity for the Stefan problem in 4+1 dimensions]{Generic regularity in time for solutions\\of the Stefan problem in 4+1 dimensions}
\date{}
\author{Giacomo Colombo}
\address{ETH Z\"urich, Department of Mathematics, R\"amistrasse 101, 8092, Z\"urich, Switzerland}
\email{giacomo.colombo@math.ethz.ch}
\begin{document}
\begin{abstract}
    We show that the free boundary of a solution of the Stefan problem in $\mathbb R^{4+1}$ is a $3$-dimensional manifold of class $C^\infty$ in $\R^4$ for almost every time.
    This is achieved by showing that for all dimensions $n$ the singular set $\Sigma\subset \R^{n+1}$ can be decomposed in two parts $\Sigma=\Sigma^\infty\cup \Sigma^*$, where $\Sigma^\infty$ is covered by one $(n-1)$-dimensional manifold of class $C^\infty$ in $\R^{n+1}$
    and its projection onto the time axis has Hausdorff dimension 0,
    while $\Sigma^*$ 
    is parabolically countably $(n-2)$-rectifiable.
\end{abstract}
\maketitle
\section{Introduction}
The Stefan problem~\cite{Stefan} describes the evolution of a block of ice melting in water. More precisely, the temperature $\theta\ge0$ satisfies $\partial_t\theta=\Delta\theta$ in $\{\theta>0\}$ (the region occupied by water) with the additional boundary condition $\theta_t=|\nabla_x\theta|^2$ on $\partial\{\theta>0\}$ (the interface ice/water).
Assuming non-zero speed of the free boundary $\partial\{\theta>0\}$, the Stefan problem is locally equivalent through the change of variables $u(x,t)=\int_0^t\theta(x,s)ds$ (see \cite{Duvaut}) to the parabolic obstacle problem
\begin{equation}\label{eq:SP}
    \begin{cases}
        (\Delta-\partial_t)u=\chi_{\{u>0\}} &\text{in }\Omega\times[0,T],\\
        u\ge0,\partial_tu\ge0 &\text{in }\Omega\times[0,T],\\
        \partial_tu>0   &\text{in }\{u>0\},
    \end{cases}\quad \Omega\times[0,T]\subset\R^n\times\R.
\end{equation}
The regularity of the free boundary for \eqref{eq:SP} was developed in the groundbreaking \cite{Caffarelli77}. The main result shows that the free boundary is smooth outside a closed set $\Sigma$ of singular points, which present a ``cusp-like'' behaviour, namely such that the contact set $\{u=0\}$ has zero density.

\smallskip\noindent
The center of further studies has been the singular part of the free boundary. In particular, there has been interest on the study of the size of the singular set and its structure \cite{LindgrenMonneau15,Blanchet06,FRS}.
If we denote by $\Sigma_t$ the singular set at time $t$, variants of the techniques used in the elliptic counterpart show that $\Sigma_t$ is contained in an $(n-1)$-dimensional $C^1$ manifold. Moreover, the whole singular set $\Sigma$ is contained in an $(n-1)$-dimensional manifold which is $C^1$ in space and $C^{1/2}$ in time \cite{Blanchet06,LindgrenMonneau15}.

This results are optimal in space, in the sense that for a single time $t$ the free boundary can be $(n-1)$-dimensional. However, it is still open the question of what is the size \textit{in time} of the singular set, i.e. for how many times the set $\Sigma_t$ is not empty.
In the seminal paper~\cite{FRS} the authors show the sharp bound on the parabolic dimension\footnote{The parabolic Hausdorff dimension of a set is the Hausdorff dimension in by the parabolic distance $\delta((x_1,t_1),(x_2,t_2))=\sqrt{|x_1-x_2|^2+|t_1-t_2|}$.} of the singular set $\dim_\pb(\Sigma)\le n-1$.
Moreover, if $n\le3$, the authors show that for almost every time $t$ the singular set $\Sigma_t$ is empty\footnote{More precisely, if $u$ solves \eqref{eq:SP} in $n+1$ dimensions and $\cS$ denotes the set of times such that $\Sigma_t$ is not empty, then $\dim_\cH(\cS)=0$ when $n=2$ and $\dim_\cH(\cS)\le 1/2$ when $n=3$.}.

Our main result is the following.
\begin{teo}\label{teo:generic R4}
    Let $\Omega\subset\R^4$, let $u\in L^\infty(\Omega\times(0,T))$ be a solution of the Stefan problem~\eqref{eq:SP}, and let
    \[
        \cS\coloneqq \{t\in(0,T)\,:\,\exists x\in\Omega \text{ s.t. }(x,t)\in\Sigma\}.
    \]
    Then
    \[
        \cH^1(\cS)=0.
    \]
    In particular, for almost every time $t\in(0,T)$, the free boundary is a 3-dimensional manifold of class $C^\infty$ in $\R^4$.
\end{teo}
Theorem~\ref{teo:generic R4} is based on the following result, valid in all dimensions.
\begin{teo}\label{teo:smooth covering}
    Let $\Omega\subset\R^n$ and let $u$ be a bounded solution of the Stefan problem \eqref{eq:SP} in $\Omega\times[0,T]$. Then there exists $\Sigma^\infty\subset\Sigma$ that can be covered by one $(n-1)$-dimensional manifold of class $C^\infty$ in $\R^{n+1}$ and such that
    \[
        \dim_\cH(\{t\in (0,T)\,:\, \exists (x,t)\in\Sigma^\infty\})=0,
    \]
    while $\Sigma\setminus\Sigma^\infty$ is parabolically countably $(n-2)$-rectifiable\footnote{A set is parabolically countably $m$-rectifiable if it can be covered by countably many sets of the form $f(E)$, where $E\subset\R^m$ and $f\colon E\to \R^{n+1}$ is Lipschitz with respect to the parabolic distance. By~\cite[Theorem 6.1]{Mattila}, this is stronger than rectifiability with respect to the euclidean structure.}.
\end{teo}

\begin{oss}
    The approach is solid and we expect it to apply also to less regular right hand sides.
    More precisely, assume that $u\ge0$ satisfying $\partial_tu\ge0$ and $\partial_tu>0$ in $\{u>0\}$ solves $(\Delta-\partial_t) u = f\chi_{\{u>0\}}$ with $f>0$ and $f\in C^{h,\beta}$ for some $h\ge3$ and $\beta\in(0,1)$. Then the first part of Theorem~\ref{teo:smooth covering} should hold with an $(n-1)$-dimensional manifold of class $C^{h+1,\alpha}$ for all $\alpha<\beta$.
    Similarly, Theorem~\ref{teo:generic R4} is expected to hold assuming $0<f\in C^\infty$.
\end{oss}

\subsection{Ideas of the proof}
The proof of Theorems~\ref{teo:generic R4} and \ref{teo:smooth covering} are based on a refinement of the results and the techniques used in \cite{FRS}. In particular, once Theorem~\ref{teo:smooth covering} is proven, to show Theorem~\ref{teo:generic R4} it is only necessary to control the size of the projection onto the time axis of $\Sigma\setminus\Sigma^\infty$.
Since for solutions in $\R^{4+1}$ this set is $2$-dimensional, it is sufficient to show a sharp ``quadratic cleaning'' of the free boundary, namely that at all singular points $(x_0,t_0)$
\begin{equation}\label{eq:intro quadratic cleaning}
    \partial\{u(\cdot,t_0+Cr^2)>0\}\cap B_r(x_0) =\emptyset\quad\forall r>0,
\end{equation}
and combine this with a covering argument.

We now illustrate the strategy adopted to prove Theorem~\ref{teo:smooth covering}. Let $(x_0,t_0)$ be a singular point, then
\begin{equation}\label{eq:first taylor}
    u(x_0+x,t_0+t) = p_{2,x_0,t_0} + o(|x|^2+|t|),
\end{equation}
where $p_{2,x_0,t_0}$ is a quadratic polynomial of the form $\frac12 x^t\cdot Ax$, with $A\ge0$ and $\tr A=1$.
In particular, the singular set can be stratified as $\Sigma=\cup_{k=0}^{n-1}\Sigma_k$, where
\[
    \Sigma_k\coloneqq\{\dim(\{p_{2,x_0,t_0}=0\})=k\}.
\]
One of the consequences of~\eqref{eq:first taylor} is that each stratum $\Sigma_k$ can be covered by a $k$-dimensional manifold of class $C^1$ in space and $C^{1/2}$ in time. Hence the main challenge to show Theorem~\ref{teo:smooth covering} is to prove that the stratum $\Sigma_{n-1}$ can be covered by a smooth (i.e. of class $C^\infty$ in space and time, with respect to the euclidean structure) $(n-1)$-dimensional manifold.
In order to increase the regularity of the covering manifolds, one needs to improve the expansion~\eqref{eq:first taylor} to higher order, thus Theorem~\ref{teo:smooth covering} is based on a pointwise smooth expansion at all points in the maximal stratum $\Sigma_{n-1}$.
Note that a solution of the Stefan problem is not smooth (as the optimal regularity for a solution of \eqref{eq:SP} is $C^{1,1}$ in space and $C^1$ in time), so one needs to introduce suitable functions (namely two-sided polynomial Ansätze, see Definition~\ref{defn:two sided ansatz}) to approximate the solution at all orders.

\begin{teo}\label{teo:smooth expansion}
For all $\rho,M,c>0$, $\alpha\in(0,1)$ and $k\ge3$ there exist $\bar r,\beta>0$ such that for all $u$ solving the Stefan problem \eqref{eq:SP} in $B_1\times[-1,1]$ and in the class $\cS(M,c,\rho)$ (see Definition~\ref{defn:non-degeneracy class}) and for all $(x_0,t_0)\in\Sigma_{n-1}(u)\cap B_{1-\rho}\times[-1+\rho^2,1]$ there is a two-sided polynomial Ansatz $\mathscr P_k$ (see Definition~\ref{defn:two sided ansatz}) such that
\[
    \|u(x_0+\cdot,t_0+\cdot) - \mathscr P_k\|_{L^2(B_r\times[-r^2,-r^{2+\beta}])} \le r^{k+\alpha}\quad \forall r<\bar r.
\]
\end{teo}
One of the many contributions of~\cite{FRS} consists in noting that to prove Theorem~\ref{teo:smooth expansion} it is actually sufficient to show a $C^{3+\beta}$ expansion, namely to prove that for all $(x_0,t_0)\in\Sigma_{n-1}$ there is a cubic two-sided polynomial ansatz $\mathscr P_3$ such that
\begin{equation}\label{eq:C3beta expansion introduction}
    u(x_0+x,t_0+t) = \mathscr P_3(x,t) + O((|x|+\sqrt{|t|})^{3+\beta}).
\end{equation}
As explained in \cite{FRS}, the main idea is that since $\partial_tu>0$ \eqref{eq:C3beta expansion introduction} implies that $u$ behaves like two regular solutions in the domain $\Omega^\beta\coloneqq\{|x|^{2+\beta}<-t\}$, whose scaling is subcritical with respect to the parabolic scaling.
The authors were able to show in \cite{FRS} a $C^\infty$ expansion at regular points solid enough to be applied in this context.

\medskip\noindent As pointed out in~\cite{FRS}, the proof of~\eqref{eq:C3beta expansion introduction} presents two main difficulties. The first one is showing the weaker expansion
\begin{equation}\label{eq:cubic expansion}
    u(x_0+x,t_0+t) = p_2(x) + O((|x|+\sqrt{|t|})^3),
\end{equation}
at all points in the maximal stratum.
This issue can be solved by proving the parabolic version of \cite{franceschinifrequencygap}.
This consists in establishing a frequency gap for the parabolic thin obstacle problem (see~\cite{DanielliPTOP} and references therein), excluding the existence of nontrivial (parabolically) $\lambda$-homogeneous solutions when $\lambda\in(2,3)$.

The most delicate part is improving the cubic expansion~\eqref{eq:cubic expansion} to the enhanced $C^{3+\beta}$ expansion \eqref{eq:C3beta expansion introduction}. This was done in \cite{FRS} ``at most points'', i.e. up to an $(n-2)$-dimensional set. Their approach, though, is not based on solid monotonicity formulas.
The main contribution of this paper is to simplify this step by fully exploiting the monotonicity of a cubic Weiss-type energy, showing an epiperimetric inequality at small enough scales at all points satisfying the cubic expansion \eqref{eq:cubic expansion}.\\
An epiperimetric inequality has been introduced for the first time in the context of minimal surfaces in~\cite{Reifenberg} and used in the context of a free boundary problem in~\cite{Weiss99}.
Since then, it has been extensively used in free boundary problems to show uniqueness and rate of convergence to blow-ups of solutions (see for instance \cite{ColomboSpolaorVelichkovObstacle,EngelsteinSpolaorVelichkov,CarducciEpiperimetric} and references therein for elliptic problems, and \cite{Shi20,ColomboSpolaorVelichkovParabolic} and references therein for parabolic problems).
We note in particular that a similar problem has been studied in \cite{CarducciEpiperimetric,SaYu}, namely an expansion of the type \eqref{eq:C3beta expansion introduction} at odd points in the Signorini problem.
However, to show the epiperimetric inequality (see Proposition~\ref{prop:epiperimetric stefan}) it seems best for our parabolic setting the approach used by~\cite{Shi20}.

Finally, we note that the techniques used here do not apply to lower strata of the singular set, since an expansion of type \eqref{eq:cubic expansion} is \textit{false} at all points of the lower strata (see \cite{FRS}). The study of those points becomes relevant for instance when trying to understand the size of the projection onto the time axis of the singular set of solutions in $5+1$ dimensions.

\subsection{Structure of the paper} In Section~\ref{sec:frequency gap} we show a frequency gap for the parabolic thin obstacle problem. In Section~\ref{sec:C3beta expansion} we prove a $C^{3+\beta}$ expansion at singular points in the maximal stratum, and in Section~\ref{sec:main results} we use this result to show Theorem~\ref{teo:smooth expansion}. In Section~\ref{sec:generic} we prove Theorem~\ref{teo:generic R4}.

\subsection{Acknowledgements} The author is grateful to Alessio Figalli for pointing out the usefulness of \cite[Lemma A.3]{FerryLemma}, as well as for his encouragement and supervision during this project.

\section{Notation}\label{sec:notation}
Given a point $(x_0,t_0)\in\R^{n+1}$ and $r>0$ we define the parabolic cylinder
\[
    C_r(x_0,t_0) = B_r(x_0)\times(t_0-r^2,t_0].
\]
We will write $C_r$ when $(x_0,t_0)=(0,0)$.
We will denote the heat operator by
\[
    \Heat = \Delta-\partial_t
\]
and the generator of parabolic dilations by
\[
    Z = x\cdot\nabla +2t\partial_t.
\]
Also, for $\lambda\in\R$, we will denote the Ornstein-Uhlenbeck operator by
\[
    \cL = \Delta-\frac x2\cdot\nabla,\quad \cL_\lambda= \cL + \frac\lambda2.
\]
We define the ``reverse heat kernel'' as
\begin{equation}\label{eq:gaussian kernels}
    G_n(x,t) = \frac1{(4\pi|t|)^{n/2}}e^{\frac{|x|^2}{4t}}\text{ for }t<0,\quad G_n=G_n(\cdot,-1)
\end{equation}
and the gaussian measure on $\R^n$ as
\[
    d\gamma_n = G_n(\cdot,-1)dx = (4\pi)^{-n/2}e^{-\frac{|x|^2}{4}} dx.
\]
We will denote $L^2(\gamma_n)=L^2(\R^n,\gamma_n)$ and
\[
    H^1(\gamma_n)=\{f\in H^1_\loc(\R^n,\cL^n)\,:\,f,\nabla f\in L^2(\gamma_n)\}.
\]
Finally, through the paper we will denote by $\zeta$ a smooth spatial cutoff function satisfying
\begin{equation}\label{eq:spatial cutoff}
    \zeta\in C^\infty_c(B_{1/2}),\quad0\le\zeta\le1,\quad \zeta\equiv 1\text{ in }B_{1/4}.
\end{equation}

\section{Frequency gap in the parabolic Signorini}\label{sec:frequency gap}

    In this section we show a frequency gap for the parabolic Signorini problem. As a consequence, the only possible frequency at a singular point in the maximal stratum of a solution of the Stefan problem is 3.
    A Lipschitz function $q\colon\R^n\times(-\infty,0)$ solves the parabolic Signorini problem in $\R^n$ with zero obstacle provided
    \begin{equation}\label{eq:ptop}\begin{cases}
    q\mathbf Hq=0    &\text{in }\R^n\times(-\infty,0];\\
    \mathbf Hq=0            &\text{in }\R^n\times(-\infty,0]\setminus\{x_n=0,q=0\};\\
    q\ge0\text{ and }\partial_n q\le0   &\text{in }\{x_n=0\}\times(-\infty,0],
    \end{cases}\end{equation}
    where $\Heat=\Delta-\partial_t$ denotes the heat operator. 
    We say that $q$ is (parabolically) $\lambda$-homogeneous for $\lambda\in\R$ if
    \[
        q(r x,r^2 t)=r^\lambda q(x,t)\quad\text{for all }x\in\R^n, t<0.
    \]
    The main result of this section is the following. It is the parabolic equivalent of~\cite[Theorem 1]{franceschinifrequencygap}.
    \begin{teo}[Frequency gap]\label{teo:frequency gap}
        Let $q$ be a $\lambda$-homogeneous solution of~\eqref{eq:ptop} for some $\lambda\in(2,3)$ satisfying
        $\int_{\{t=-1\}}(q^2+|\nabla q|^2)d\gamma_n<+\infty$.
        Then $q\equiv0$.
    \end{teo}
    A function $q$ is a (parabolically) $\lambda$-homogeneous solution of~\eqref{eq:ptop} if and only if
    \[\begin{split}
        \cL_\lambda q(\cdot,-1)=0\quad\text{in }\R^n\setminus\{q(\cdot,-1)=0,x_n=0\},\\ q(\cdot,-1)\ge0\quad \text{and}\quad\partial_nq(\cdot,-1)\le0\quad\text{in }\{x_n=0\},
    \end{split}\]
    where $\cL$ is defined in Section~\ref{sec:notation}.
    We will need the following Lemma, whose proof is postponed at the end of this Section.
    \begin{lem}\label{lem:test function}
        For all $2\le\lambda\le3$ there is a unique (up to a scalar multiple) non-trivial solution $p_\lambda(t)$ of
        \begin{equation}\label{eq:eigenfunctions}
            p_\lambda'' -\frac t2 p_\lambda' + \frac\lambda2 p_\lambda =0 \quad\text{in }(0,+\infty)
        \end{equation}
        such that $\int_0^{+\infty} (p_\lambda'^2+p_\lambda^2)\,d\gamma_1<+\infty$.
        Moreover, for all $2<\lambda<3$ we have $p_\lambda'(0) \cdot p_\lambda(0)>0$.
    \end{lem}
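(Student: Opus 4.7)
My plan is to read~\eqref{eq:eigenfunctions} as the one-dimensional eigenvalue equation $\cL_\lambda p=0$ and to combine the asymptotic analysis of the ODE at $+\infty$ with the spectral properties of the Ornstein--Uhlenbeck operator on the whole line. The proof splits into three independent steps.

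\textbf{Step 1 (existence and uniqueness).} Being a second-order linear ODE, \eqref{eq:eigenfunctions} has a two-dimensional solution space on $(0,+\infty)$. A WKB ansatz $p=e^{S(t)}$ at $t\to+\infty$ yields two asymptotic regimes: a polynomial branch $p\sim t^\lambda$ (balancing $-\tfrac t2 p'$ against $\tfrac\lambda2 p$) and an exponential branch $p\sim e^{t^2/4}\,t^{-\lambda-1}$ (balancing $p''$ against $-\tfrac t2 p'$); equivalently these are the two Kummer solutions $M$ and $U$ of the ODE. Only the polynomial branch satisfies $p,p'\in L^2(d\gamma_1)$ on $(0,+\infty)$, since $\int_0^\infty t^{2\lambda}e^{-t^2/4}\,dt<\infty$ whereas $\int_0^\infty e^{t^2/4}t^{-2\lambda-2}\,dt=\infty$. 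This gives a one-dimensional space of admissible solutions, proving existence and uniqueness up to a scalar.

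\textbf{Step 2 (non-vanishing at $0$).} Here I would use a reflection trick. A direct computation shows that~\eqref{eq:eigenfunctions} is preserved by $t\mapsto -t$ on even functions and by $(t,p)\mapsto(-t,-p)$ on odd functions. If $p_\lambda(0)=0$, the ODE evaluated at $t=0$ also yields $p_\lambda''(0)=0$, so the odd extension $\tilde p(t)\coloneqq\operatorname{sign}(t)\,p_\lambda(|t|)$ is $C^2$ across $0$ and solves~\eqref{eq:eigenfunctions} on all of $\R$; analogously, if $p_\lambda'(0)=0$, the even extension $\tilde p(t)\coloneqq p_\lambda(|t|)$ is a $C^2$ global solution. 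In either case $\tilde p\in L^2(\gamma_1)$ and $\cL_\lambda\tilde p=0$. Expanding $\tilde p$ in the complete orthonormal Hermite basis $\{He_n(t/\sqrt 2)/\sqrt{n!}\}_{n\ge 0}$ of $L^2(\gamma_1)$, which diagonalizes $\cL$ with eigenvalues $-n/2$, forces $\tilde p\equiv 0$ unless $\lambda\in\{0,1,2,\ldots\}$. Since $\lambda\in(2,3)$, both $p_\lambda(0)$ and $p_\lambda'(0)$ are nonzero.

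\textbf{Step 3 (sign by continuity).} I would normalize so that $p_\lambda(t)/t^\lambda\to 1$ as $t\to+\infty$; by continuous dependence of the ODE on $\lambda$ (via the integral representation of Tricomi's $U$ function, or a uniform WKB argument) the map $\lambda\mapsto(p_\lambda(0),p_\lambda'(0))$ is continuous on $[2,3]$. At the endpoints the decaying solution is a Hermite polynomial: $p_2(t)=t^2-2$, giving $p_2(0)=-2<0$ and $p_2'(0)=0$; and $p_3(t)=t^3-6t$, giving $p_3(0)=0$ and $p_3'(0)=-6<0$. Since Step~2 rules out vanishing of either coordinate on $(2,3)$, the intermediate value theorem forces $p_\lambda(0)<0$ and $p_\lambda'(0)<0$ throughout, and hence $p_\lambda(0)\cdot p_\lambda'(0)>0$.

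The main technical obstacle is establishing the continuity of the $t^\lambda$-normalized decaying branch up to the endpoints $\lambda=2,3$, where the two asymptotic branches coalesce into Hermite polynomials; this is handled via a careful choice of representative together with uniform asymptotic expansions of Kummer's functions. The reflection/spectral argument of Step~2, by contrast, is conceptually clean once one notices the discrete symmetries of~\eqref{eq:eigenfunctions}.
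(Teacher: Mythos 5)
Your proposal is correct, but the existence--uniqueness part takes a genuinely different route from the paper. The paper never touches the asymptotics of the ODE at $+\infty$: it constructs $p_\lambda$ variationally, as the first Dirichlet eigenfunction of the Ornstein--Uhlenbeck operator on half-lines $I_a=(a,+\infty)$ (using the compactness of $(-\cL)_a^{-1}$ from Lemma~\ref{lem:compact operator}), identifies $\lambda_1(\sqrt2)=2$ and $\lambda_1(\sqrt6)=3$ via the explicit polynomials $t^2-2$ and $t(t^2-6)$, obtains every $\lambda\in[2,3]$ by continuity of $a\mapsto\lambda_1(a)$, and then proves uniqueness by integrating $p\cL_\lambda p_\lambda-p_\lambda\cL_\lambda p$ by parts on $I_{a_\lambda}$ to force $p(a_\lambda)=0$, hence proportionality by ODE uniqueness. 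Your Step 1 instead classifies the two asymptotic branches (recessive $\sim t^\lambda$ versus dominant $\sim e^{t^2/4}t^{-\lambda-1}$) and observes that only the recessive one has finite Gaussian energy; this is a legitimate and arguably more elementary argument, provided you note that any combination with a nonzero dominant component inherits the dominant growth, and it buys you continuity (indeed analyticity) in $\lambda$ for free, since the recessive branch is, up to rescaling, the Hermite function $H_\lambda(t/2)$, which is entire in $\lambda$ -- so the ``coalescence of branches at the endpoints'' you flag as the main technical obstacle is not actually an issue, and one could even read off $p_\lambda(0),p_\lambda'(0)$ from the classical formulas for $H_\lambda(0),H_\lambda'(0)$. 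Your Steps 2 and 3 (odd/even reflection plus Hermite expansion to exclude vanishing of $p_\lambda(0)$ or $p_\lambda'(0)$ for non-integer $\lambda$, then continuity in $\lambda$ and the endpoint polynomials to pin the signs) are essentially identical to the paper's sign argument, with the small difference that you normalize by the behaviour at infinity while the paper normalizes in $L^2(\gamma_1)$ with a sign convention; what the paper's spectral construction buys in exchange for avoiding special-function asymptotics is a soft, self-contained argument (Lax--Milgram, Gaussian Poincar\'e and log-Sobolev) whose eigenvalue framing also directly yields the smooth dependence on $\lambda$ used in the sign step.
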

    Using the lemma above, we can prove Theorem~\ref{teo:frequency gap} following the argument in \cite{franceschinifrequencygap}.
    \begin{proof}[Proof of Theorem~\ref{teo:frequency gap}]
    In $\R^n_+=\{x_n>0\}$ we consider $p_\lambda(x)=p_\lambda(x_n)$ a non-trivial solution of $\cL
    _\lambda p_\lambda=0$ in $\{x_n>0\}$ given by Lemma~\ref{lem:test function}.
    Writing $G_n=G_n(\cdot,-1)$, as $\nabla G_n = -\tfrac x2G_n$ for any functions $f,g$ we have $G_nf\cL g=f\divop(G_n\nabla g)$.
    Thus an integration by parts gives
    \begin{equation}\label{eq:frequency gap integration by parts}\begin{split}
        0&=\int_{\{x_n>0\}} (p_\lambda \cL_\lambda q-q\cL_\lambda p_\lambda)\,d\gamma_n= \int_{\{x_n>0\}} (p_\lambda \cL q-q\cL p_\lambda)\,d\gamma_n \\
        &=\int_{\{x_n>0\}}p_\lambda\divop(G_n\nabla q)-q\divop(G_n\nabla p_\lambda)\,dx\\
        &=\int_{\{x_n>0\}} \divop(p_\lambda G_n\nabla q-qG_n\nabla p_\lambda) \,dx\\
        &=\int_{\{x_n=0\}} (q\partial_np_\lambda - p_\lambda\partial_n q)G_n\,dx
    \end{split}\end{equation}
    which implies
    \[
        p_\lambda'(0)\int_{\{x_n=0\}}q\,G_ndx=p_\lambda(0)\int_{\{x_n=0\}}\partial_nq\,G_ndx.
    \]
    Since $q\ge0$ and $\partial_nq\le0$ in $\{x_n=0\}$, Lemma~\ref{lem:test function} implies that for $2<\lambda<3$ the two terms have opposite sign, unless $\partial_nq\equiv q\equiv0$ in $\{x_n=0\}$. By unique continuation this implies $q\equiv0$, as we wanted.
\end{proof}

Before proving Lemma~\ref{lem:test function}, we recall spectral properties of the Ornstein-Uhlenbeck operator with Dirichlet boundary conditions in unbounded intervals in $\R$.
Given $a\in\R$ we set $I_a\coloneqq (a,+\infty)$ and we define
\[\begin{split}
    L^2(I_a,\gamma_1)&\coloneqq\left\{f\colon I_a\to\R\colon \int_a^{+\infty}f^2\,d\gamma_1<+\infty\right\},\\ H^1_0(I_a,\gamma_1)&\coloneqq\left\{f\colon I_a\to\R\colon \int_a^{+\infty}(f^2+f'^2)\,d\gamma_1<+\infty,f(a)=0\right\}.
\end{split}\]
We recall the Ornstein-Uhlenbeck operator
\[
    -\cL u= -G_1^{-1} (G_1 u')'=-u''+\frac t2 u'.
\]
For $a\in\R$ and $f\in L^2(I_a,G_1)$ we denote by $u=(-\cL)_a^{-1}f$ the solution (provided it exists and is unique) $u\in H^1_0(I_a,G_1)$ of
\[\begin{cases}
    -\cL u=f &\text{in }I_a,\\
    u(x)=0  &\text{for }x=a.
\end{cases}\]

We will need the following properties of the operators $\cL_a^{-1}$ for $a\in[\sqrt 2,\sqrt 6]$, which are a consequence of the Gaussian log-Sobolev and Poincar\'e inequalities.
\begin{lem}\label{lem:compact operator}
    The operator $(-\cL)_a^{-1}\colon L^2(I_a,\gamma_1)\to L^2(I_a,\gamma_1)$ is well defined, compact, and self-adjoint for all $a\ge0$.
    Thus, its spectrum is discrete.
\end{lem}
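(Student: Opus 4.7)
The plan is to apply Lax-Milgram to produce $(-\cL)_a^{-1}$ as a bounded linear operator $L^2(I_a,\gamma_1)\to H^1_0(I_a,\gamma_1)$, then to factor it as $L^2(I_a,\gamma_1)\xrightarrow{(-\cL)_a^{-1}} H^1_0(I_a,\gamma_1)\hookrightarrow L^2(I_a,\gamma_1)$ and prove compactness of the inclusion. Consider the symmetric bilinear form
\[
B(u,v)=\int_{I_a} u'(t)\,v'(t)\,d\gamma_1(t)
\]
on the Hilbert space $H^1_0(I_a,\gamma_1)$. The correct notion of weak solution is $B(u,v)=\langle f,v\rangle_{L^2(\gamma_1)}$ for all $v\in H^1_0(I_a,\gamma_1)$, which encodes both the equation $-u''+\tfrac{t}{2}u'=f$ and the Dirichlet condition at $t=a$ (after integration against $G_1$ and using $G_1'=-\tfrac t2 G_1$). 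The first task is to establish a Poincaré-type inequality on $I_a$ for functions vanishing at $a$, namely $\|u\|_{L^2(\gamma_1)}\le C_a\|u'\|_{L^2(\gamma_1)}$; writing $u(t)=\int_a^t u'(s)\,ds$, Cauchy--Schwarz gives $u^2(t)\le (t-a)\int_a^t u'^2(s)\,ds$, and after multiplying by $e^{-t^2/4}$ and applying Fubini the inner integral $\int_s^\infty (t-a)e^{-t^2/4}\,dt$ is uniformly comparable to $e^{-s^2/4}$ on $s\ge a$, yielding the desired constant.

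Once the Poincaré inequality is in hand, $B$ is coercive and continuous on $H^1_0(I_a,\gamma_1)$, and $v\mapsto \int_{I_a} fv\,d\gamma_1$ is bounded for each $f\in L^2(I_a,\gamma_1)$. Lax-Milgram then produces a unique $u=(-\cL)_a^{-1}f\in H^1_0(I_a,\gamma_1)$ with $\|u\|_{H^1(\gamma_1)}\le C\|f\|_{L^2(\gamma_1)}$, proving the operator is well-defined and bounded into $H^1_0$. Self-adjointness is immediate from the symmetry of $B$: testing the equation for $u_1=(-\cL)_a^{-1}f_1$ against $u_2=(-\cL)_a^{-1}f_2$ and vice versa gives
\[
\int_{I_a} f_1 u_2\,d\gamma_1 = B(u_1,u_2)=B(u_2,u_1)=\int_{I_a} f_2 u_1\,d\gamma_1.
\]

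The main work is in the compactness of the inclusion $H^1_0(I_a,\gamma_1)\hookrightarrow L^2(I_a,\gamma_1)$. Let $\{u_k\}$ be bounded in $H^1_0(I_a,\gamma_1)$; extracting a subsequence, we may assume $u_k\rightharpoonup u$ weakly in this space. For any $R>a$, the unweighted norm $\|u_k\|_{H^1((a,R))}$ is controlled by $e^{R^2/8}\|u_k\|_{H^1(\gamma_1)}$, so the classical Rellich--Kondrachov theorem yields strong convergence $u_k\to u$ in $L^2((a,R))$, and hence in $L^2((a,R),\gamma_1)$. To handle the tail, note that since $u_k(a)=0$ we have $u_k^2(t)\le (t-a)\|u_k'\|_{L^2((a,t))}^2$; integrating against the Gaussian yields
\[
\int_R^\infty u_k^2\,d\gamma_1 \le C\|u_k\|_{H^1(\gamma_1)}^2 \int_R^\infty (t-a)\,e^{-t^2/4}\,dt,
\]
and the right-hand side vanishes uniformly as $R\to\infty$. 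Combining the two estimates gives strong convergence in $L^2(I_a,\gamma_1)$, which is the delicate step. Compactness of the bounded operator $(-\cL)_a^{-1}\colon L^2\to H^1_0\hookrightarrow L^2$ follows, and the spectral theorem for compact self-adjoint operators on a Hilbert space yields the discrete spectrum.
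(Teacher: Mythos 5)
Most of your write-up is sound and parallels the paper: Lax--Milgram on the form $B(u,v)=\int_{I_a}u'v'\,d\gamma_1$, a Poincar\'e inequality for functions vanishing at $a$ (your Fubini argument works, since for $a\ge 0$ one has $\int_s^\infty(t-a)e^{-t^2/4}dt\le 2e^{-s^2/4}$ for $s\ge a$; the paper instead quotes the Gaussian Poincar\'e inequality applied to an odd extension), self-adjointness from symmetry of the form, and the factorization through the embedding $H^1_0(I_a,\gamma_1)\hookrightarrow L^2(I_a,\gamma_1)$. The genuine gap is in the tail estimate for the compactness of this embedding, which is exactly the delicate point. From $u_k^2(t)\le (t-a)\int_a^t u_k'^2(s)\,ds$ you deduce $\int_R^\infty u_k^2\,d\gamma_1\le C\|u_k\|^2_{H^1(\gamma_1)}\int_R^\infty(t-a)e^{-t^2/4}dt$, i.e.\ you bound the \emph{unweighted} integral $\int_a^t u_k'^2(s)\,ds$ by the Gaussian-weighted norm. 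That is not legitimate: the best one can say is $\int_a^t u_k'^2\,ds\le \sqrt{4\pi}\,e^{t^2/4}\int_a^t u_k'^2\,d\gamma_1$, and the factor $e^{t^2/4}$ exactly cancels the Gaussian decay you are trying to exploit.

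In fact your displayed tail inequality is false, not merely unproved: for $a=0$ take $u_R(t)=e^{R^2/8}\min\{(t-R)_+,1\}$. Then $u_R(0)=0$ and $\|u_R\|_{H^1(\gamma_1)}\le C$ uniformly in $R$, while $\int_R^\infty u_R^2\,d\gamma_1\ge c\,e^{-R}$ for large $R$, which is far larger than the claimed rate $Ce^{-R^2/4}$. So no Gaussian-rate tail bound holds uniformly on bounded sets of $H^1_0(I_a,\gamma_1)$, and your argument only gives tail smallness for each fixed function, not the uniform smallness needed for compactness. This is precisely where the paper works harder: it derives the uniform tail bound from the Gaussian log-Sobolev inequality (applied to $F=\max\{|f|,1\}$, gaining a factor $1/\log\lambda$). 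An elementary repair of your route would be the moment bound $\int_{I_a}t^2u^2\,d\gamma_1\le C\bigl(\int_{I_a}u^2\,d\gamma_1+\int_{I_a}u'^2\,d\gamma_1\bigr)$, obtained by integrating by parts with $\tfrac{d}{dt}e^{-t^2/4}=-\tfrac t2e^{-t^2/4}$ (the boundary term at $t=a$ vanishes since $u(a)=0$, the one at infinity is handled by truncation) and Young's inequality; Chebyshev then gives $\int_{\{t>R\}}u^2\,d\gamma_1\le CR^{-2}\|u\|^2_{H^1(\gamma_1)}$, which is uniform and suffices. The remaining steps (local Rellich on $(a,R)$, weak limits, spectral theorem for compact self-adjoint operators) are fine.
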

\begin{proof}
By the Gaussian Poincar\'e inequality (see \cite[Theorem 4.6.3]{BakryMarkov}) there is $C>0$ such that
\[
\int_\R f^2d\gamma_1-\left(\int_\R fd\gamma_1\right)^2\le C\int_\R f'^2d\gamma_1\quad \text{for all }f\in H^1(\R,\gamma_1).
\]
Given $a\ge0$ and $f\in H^1_0(I_a,\gamma_1)$ we can apply this inequality to the extension $\tilde f\in H^1(\R,\gamma_1)$ given by
\begin{equation}\label{eq:extension proof spectrum}
    \tilde f (x) = \begin{cases}
        0   &\text{for }|x|<a,\\
        f(x)    &\text{for }x>a,\\
        -f(-x)  &x<-a.
    \end{cases}
\end{equation}
Since $\int_\R \tilde fd\gamma_1=0$, we find
\[
\int_a^{+\infty}f^2d\gamma_1\le C\int_a^{+\infty}f'^2d\gamma_1.
\]
It follows by the Lax-Milgram Theorem (see for instance~\cite[Corollary 5.8]{Brezis}) that for all $f\in L^2(I_a,\gamma_1)$ there is a unique solution $u\in H^1_0(I_a,\gamma_1)$ of
\[
\begin{cases}
    -\cL u=f&\text{in }I_a,\\
    u(a)=0
\end{cases}
\]
and satisfying
\[
    \int_a^{+\infty} u'^2d\gamma_1\le C\int_a^{+\infty}f^2d\gamma_1
\]
for some $C>0$ (independent from $f$).
Thus the operator $(-\cL)_a^{-1}\colon L^2(I_a,\gamma_1)\to H_0^1(I_a,\gamma_1)$ is well defined, bounded and symmetric.
If we show that the embedding $H^1_0(I_a,\gamma_1)\hookrightarrow L^2(I_a,\gamma_1)$ is compact then $(-\cL)_a^{-1}\colon L^2(I_a,\gamma_1)\to L^2(I_a,\gamma_1)$ will be compact.
Standard arguments for compact symmetric operators will then allow us to conclude (see e.g.~\cite[Theorem 6.11]{Brezis}).

Since $\gamma_1$ is locally equivalent to the standard Lebesgue measure,
the embeddings $H^1_0(I_a\cap B_R,\gamma_1)\hookrightarrow L^2(I_a\cap B_R,\gamma_1)$ are compact for all $R>0$.
Thus, it is sufficient to show that for all $M,\eps>0$ there is $R>0$ so that
\[
    \int_{\{|x|>R\}} f^2d\gamma_1\le \eps
\]
for all $f\in H^1(\R,\gamma_1)$ satisfying
\[
    \int_\R (f^2+f'^2)d\gamma_1\le M,
\]
as given $f\in H^1_0(I_a,\gamma_1)$ we can apply this to the extension $\tilde f\in H_0^1(\R,\gamma_1)$ defined in~\eqref{eq:extension proof spectrum} together with a standard diagonal argument.
Recall the Gaussian log-Sobolev inequality (see \cite[Proposition 5.5.1]{BakryMarkov})
\[
\int_\R F^2\log F^2d\gamma_1\le C\int_\R F'^2d\gamma_1+\left(\int_\R F^2d\gamma_1\right)\log\left(\int_\R F^2d\gamma_1\right).
\]
Fix $\lambda,R>0$ large to be set later and consider $F=\max\{|f|,1\}$. Since $F=|f|$ on $\{f^2>\lambda\}$, $1\le F^2\le 1+f^2$ and $|F'|\le|f'|$ we find
\[\begin{split}
    \log(\lambda^2)\int_{\{|x|>R\}} f^2\chi_{\{f^2>\lambda\}}d\gamma_1&\le\int_\R F^2\log F^2\chi_{\{f^2>\lambda\}}d\gamma_1\le \int_\R F^2\log F^2d\gamma_1\\
    &\le C\int_\R F'^2d\gamma_1 + \left(\int_\R F^2d\gamma_1\right) \log\left(\int_\R F^2d\gamma_1\right)\\
    &\le C\int_\R f'^2d\gamma_1 + \left(1+\int_\R f^2d\gamma_1\right)\log\left(1+\int_\R f^2 d\gamma_1\right)\\
    &\le CM + (1+M)\log(1+M).
\end{split}\]
Thus,
\[\begin{split}
    \int_{\{|x|>R\}} f^2d\gamma_1 &\le \lambda\int_{\{|x|>R\}}d\gamma_1 + \int_{\{|x|>R\}} f^2\chi_{\{f^2>\lambda\}}d\gamma_1\\
    &\le \lambda \int_{\{|x|>R\}}d\gamma_1 +\frac1{\log \lambda} C(M).
\end{split}\]
Choosing first $\lambda$ so that $(\log\lambda)^{-1}C(M)<\eps/2$ and then $R$ so that $\lambda\int_{\{|x|>R\}}d\gamma_1<\eps/2$ we find the claim.
\end{proof}

\begin{proof}[Proof of Lemma~\ref{lem:test function}]
\textit{Existence.} By Lemma~\ref{lem:compact operator} the operator $(-\cL)_a^{-1}$ is compact and symmetric for all $\sqrt 2\le a\le\sqrt6$. In particular there is a first eigenvalue $\lambda_1(a)>0$ and functions $p_a$ with finite energy that are positive on $I_a$ and solve
\[
    p_a''-\frac t2p_a' + \frac{\lambda_1(a)}{2}p_a=0\quad\text{in }I_a,\quad p_a(a)=0.
\]
Since the functions
\begin{equation}\label{eq:proof eigenfunctions polynomials}
    p_2(t)=c_2(t^2-2),\quad p_3(t)=c_3t(t^2-6),\quad c_2,c_3>0
\end{equation}
solve
\[
    -(G_1 u')'=\tfrac\lambda2 G_1u\quad\text{in }(a,+\infty),\quad u(a)=0
\]
with $a=\sqrt 2,\sqrt 6$ and $\lambda=2,3$ and are positive in $I_{\sqrt 2}$ and $I_{\sqrt6}$ respectively,
it follows that $\lambda_1(\sqrt 2)=2$ and $\lambda_1(\sqrt 6)=3$.
As the first eigenvalue depends continuously on $a$, for all $2\le \lambda\le3$ there is $a_\lambda\in[\sqrt 2,\sqrt 6]$ such that $\lambda_1(a_\lambda)=\lambda$.
Existence of solutions of~\eqref{eq:eigenfunctions} with finite energy follows by extending $p_{a_\lambda}$ to $\R$ by solving a Cauchy problem for the ODE.

\smallskip\noindent\textit{Uniqueness.} Assume that $p$ is a finite energy solution of~\eqref{eq:eigenfunctions} for some $2\le\lambda\le3$, let $a$ be such that $\lambda_1(I_a)=\lambda$ and denote by $p_\lambda$ the first eigenfunction on $I_a$.
Then, we compute as in~\eqref{eq:frequency gap integration by parts}
\[
    0=\int_a^{+\infty}(p\cL_\lambda p_\lambda-p_\lambda\cL_\lambda p)\,d\gamma_1= G_1(a)(p_\lambda(a)p'(a)-p_\lambda'(a)p(a)).
\]
Since $p_\lambda(a)=0$ and $p_\lambda'(a)\neq0$ (otherwise $p_\lambda\equiv0$) this implies that also $p(a)=0$. It follows that there is $c\in\R$ such that both $cp$ and $p_\lambda$ solve~\eqref{eq:eigenfunctions} with $cp(a)=p_\lambda(a)=0$ and $cp'(a)=p_\lambda'(a)$, which implies $p\equiv cp_\lambda$ as we wanted.

\smallskip\noindent\textit{Sign condition.} The family of solutions $p_\lambda$, $2\le\lambda\le3$, chosen so that $\int_0^{+\infty}p_\lambda^2\,d\gamma_1=1$ and $p_\lambda>0$ for $t>\sqrt6$ depends smoothly on $\lambda$. Moreover, for $\lambda=2,3$ they coincide with the functions in~\eqref{eq:proof eigenfunctions polynomials} for some $c_2,c_3>0$ chosen so that $\int_0^{+\infty} p_2^2d\gamma_1=\int_0^{+\infty} p_3^2d\gamma_1=1$.
We note in addition that if $p_\lambda(0)=0$ then an odd reflection of $p_\lambda$ will be an odd eigenfunction of $\cL$ in the whole $\R$, which implies that $\lambda$ is an odd integer. Similarly, $p_\lambda'(0)=0$ implies $\lambda$ is an even integer.
Considering now the function $\lambda\mapsto p_\lambda(0)$, since it is continuous in $\lambda$, vanishes exactly when $\lambda=3$ and $p_2(0)<0$ it follows that $p_\lambda(0)<0$ for all $2\le\lambda<3$.
Similarly, since $p_\lambda'(0)$ depends continuously on $\lambda$, vanishes exactly for $\lambda=2$ and $p_3'(0)<0$, we also find $p_\lambda'(0)<0$ for all $2<\lambda\le3$, as we wanted.
\end{proof}

\section{Pointwise $C^{3+\beta}$ expansion in the maximal stratum}\label{sec:C3beta expansion}

We consider solutions $u$ of \eqref{eq:SP} in $B_1\times[-1,1]$ with uniform nondegeneracy of the time derivative at singular points of the maximal stratum.
\begin{defn}\label{defn:non-degeneracy class}
    Given $M,c>0$, $\rho\in(0,1)$ and $u\colon B_1\times[-1,1]\to[0,+\infty)$, we say that $u\in \cS(M,c,\rho)$ if it solves \eqref{eq:SP} and it satisfies:
    \begin{enumerate}[label=\roman*)]
        \item $|u| \le M$ in $B_{1-\rho/2}\times[-1+\rho^2/4,1]$;
        \item for all $(x_0,t_0)\in \Sigma_{n-1}(u)\cap B_{1-\rho}\times[-1+\rho^2,1]$ and all $r<\rho/100$ we have
        \begin{equation}\label{eq:condition nondegeneracy partial_t}
            \fint_{C_r(x_0,t_0)}\partial_t u \ge cr,
        \end{equation}
        where $C_r(x_0,t_0)=B_r(x_0)\times (t_0-r^2,t_0]$.
    \end{enumerate}
\end{defn}
We note the following consequence of \cite[Lemma 8.4]{FRS} together with local $L^\infty$ bounds for solutions of \eqref{eq:SP}.
\begin{lem}\label{lem:non-degeneracy time derivative single solution}
    Let $u\colon B_1\times[-1,1]\to[0,+\infty)$ solve \eqref{eq:SP}. Then, for all $\rho\in(0,1)$ there are $M,c>0$ so that $u\in\cS(M,c,\rho)$.
\end{lem}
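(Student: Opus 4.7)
The plan is to verify the two conditions of Definition~\ref{defn:non-degeneracy class} separately. Condition (i) is a standard local $L^\infty$ bound from interior parabolic regularity; condition (ii) follows from \cite[Lemma 8.4]{FRS} after a compactness argument to uniformize the constants over the compact set of maximal-stratum singular points, and this uniformization is the main technical step.

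For (i), since $u\ge 0$ and $(\Delta-\partial_t)u = \chi_{\{u>0\}} \in [0,1]$ on $B_1\times[-1,1]$, the $C^{1,1}$-in-space and $C^1$-in-time interior regularity of \cite{Caffarelli77} yields $M \coloneqq \|u\|_{L^\infty(B_{1-\rho/2}\times[-1+\rho^2/4,1])} < \infty$, depending only on $u$ and $\rho$.

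For (ii), \cite[Lemma 8.4]{FRS} supplies at each $(x_0,t_0)\in \Sigma_{n-1}(u)$ a pointwise nondegeneracy $\fint_{C_r(x_0,t_0)}\partial_t u \ge c_{x_0,t_0}\, r$ valid at small scales. I would uniformize it over $K\coloneqq \overline{B_{1-\rho}}\times[-1+\rho^2,1]$ by contradiction. Assuming sequences $(x_k,t_k)\in\Sigma_{n-1}(u)\cap K$ and $r_k\in(0,\rho/100)$ with $\fint_{C_{r_k}(x_k,t_k)}\partial_t u < k^{-1}r_k$, extract a subsequence with $(x_k,t_k)\to(x_\infty,t_\infty)\in K$. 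If $\liminf r_k > 0$, then after passing to $r_k\to r_\infty > 0$ dominated convergence (using $\partial_t u \in L^\infty_\loc$) forces $\fint_{C_{r_\infty}(x_\infty,t_\infty)}\partial_t u = 0$, hence $u\equiv 0$ on $\overline{C_{r_\infty}(x_\infty,t_\infty)}$ (since $\{u>0\}$ is open and $\partial_t u > 0$ there); this contradicts Caffarelli's parabolic nondegeneracy $\sup_{C_r(x_\infty,t_\infty)} u \ge c\,r^2 > 0$ applied at the (closed) free boundary point $(x_\infty,t_\infty)$. If instead $r_k\to 0$, consider the parabolic rescalings
\[
    u_k(y,s)\coloneqq r_k^{-2}\,u(x_k+r_ky,\,t_k+r_k^2 s),
\]
uniformly bounded on $C_1$ by $C^{1,1}$ regularity together with $u(x_k,t_k)=|\nabla u(x_k,t_k)|=0$. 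Extracting $u_k\to u_\infty$ in $C^1_\loc$, the hypothesis rescales to $\fint_{C_1}\partial_s u_\infty = 0$, forcing $\partial_s u_\infty \equiv 0$ on $C_1$; but \cite[Lemma 8.4]{FRS} applied at the origin to $u_\infty$ gives $\fint_{C_r(0,0)}\partial_s u_\infty \ge c\,r > 0$ for small $r$, a contradiction.

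The main obstacle is the $r_k \to 0$ case: ensuring that $(0,0)\in \Sigma_{n-1}(u_\infty)$, so that \cite[Lemma 8.4]{FRS} remains applicable to the blowup limit. This stability of the maximal stratum under rescaling limits rests on the continuity of the blowup polynomial $(x_0,t_0)\mapsto p_{2,x_0,t_0}$ on $\Sigma$ — a consequence of the uniqueness of tangent polynomials at singular points via Weiss-type monotonicity — together with the observation that the characterizing structure of $\Sigma_{n-1}$ (rank-$1$ spatial Hessian paired with strictly positive time coefficient) is preserved in the limit of rescalings centered at $\Sigma_{n-1}$ points.
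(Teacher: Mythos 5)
Your proof of condition (i) and your ``$\liminf r_k>0$'' case are fine, but the case $r_k\to 0$ --- which you yourself identify as the main technical step --- contains a genuine gap, and it is not the one you flag. The class \eqref{eq:SP} is not closed under the quadratic rescalings $u_k=r_k^{-2}u(x_k+r_k\cdot,t_k+r_k^2\cdot)$: the condition $\partial_t u>0$ in $\{u>0\}$ is purely qualitative and is destroyed in the limit. In fact, by uniqueness of the first blow-up at singular points with a modulus that is uniform on compact subsets (the Weiss/Monneau-type statement quoted in Step~1 of the proof of Theorem~\ref{teo:C3beta expansion}), one has $|u(x_k+ry,t_k+r^2s)-p_{2,x_k,t_k}(ry)|\le \omega(r)r^2$ uniformly in $k$, so your limit $u_\infty$ is exactly the time-independent polynomial $\lim_k p_{2,x_k,t_k}=\tfrac12(x\cdot e)^2$. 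For this $u_\infty$ one has $\fint_{C_r}\partial_s u_\infty=0$ for every $r$, so the conclusion of \cite[Lemma 8.4]{FRS} is false for it and the lemma cannot be ``applied at the origin to $u_\infty$'': its hypotheses require a genuine solution of \eqref{eq:SP}, which $u_\infty$ is not (you have just shown $\partial_s u_\infty\equiv0$ on $C_1$ while $u_\infty>0$ there). In other words, after rescaling at vanishing scales the contradiction hypothesis passes to the limit as the innocuous statement that $u_\infty$ is stationary, which is consistent with everything; no contradiction is available at the level of $u_\infty$, and ensuring $(0,0)\in\Sigma_{n-1}(u_\infty)$ (your stated obstacle) does not help. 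The quadratic normalization simply discards the information the lemma is about; the small-scale nondegeneracy at a fixed point is precisely the content of \cite[Lemma 8.4]{FRS}, whose proof goes through the second blow-up, i.e.\ $u-p_2$ normalized by $H(r,\cdot)^{1/2}$, together with $\partial_{tt}u\ge-C$, $(D^2u)_-\le C\partial_tu$ and barrier/Harnack arguments (compare Lemmas~\ref{lem:estimates}, \ref{lem:cleaning nondegeneracy dt} and \ref{lem:cleaning dt barrier}); your compactness scheme would have to reprove that lemma and cannot.

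Note also that the constant in condition (ii) cannot depend only on $n$ and $M$: solutions arbitrarily close to the stationary profile $\tfrac12x_n^2$ (obtained from a tiny initial temperature in the water region) have maximal-stratum singular points but $\fint_{C_r}\partial_tu$ as small as one likes at unit scales. Consequently the constant in \cite[Lemma 8.4]{FRS} must be quantified in terms of scale-one information on the given solution near the point (an $L^\infty$ bound together with a unit-scale positivity of the average of $\partial_tu$), and this is why the paper treats the present lemma as an immediate consequence of that lemma plus local $L^\infty$ bounds: for a fixed $u$ these scale-$\rho$ quantities are uniformly controlled on the compact set $\overline B_{1-\rho}\times[-1+\rho^2,1]$ --- the $L^\infty$ bound by interior estimates (your point (i)), and the positivity of $\fint_{C_{\rho/100}(x_0,t_0)}\partial_tu$ by exactly the continuity--compactness--positivity reasoning of your fixed-scale case. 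So the correct proof is your Case A applied at scale comparable to $\rho$, followed by a citation of \cite[Lemma 8.4]{FRS} at each point; your Case B should be removed rather than repaired.
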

The main result of this section is the following.
\begin{teo}\label{teo:C3beta expansion}
    Given $M,c,\rho>0$ there are $C_0>0$ and $\beta\in(0,1/2)$ such that the following holds:
    
    Let $u\in \cS(M,c,\rho)$ (see Definition~\ref{defn:non-degeneracy class}) and let
    $(x_0,t_0)\in \Sigma_{n-1}\cap B_{1-\rho}\times[-1+\rho^2,1-\rho^2]$. Then there is a parabolically 3-homogeneous function $p_3$ solving the parabolic thin obstacle problem \eqref{eq:ptop} such that, up to a rotation in space,
    \[
        |u(x_0+r\cdot,t_0+r^2\cdot)-\tfrac12r^2x_n^2-r^3p_3|\le C_0 r^{3+\beta}\quad\text{in }C_1\quad\forall r\in(0,\rho).
    \]
\end{teo}
We first collect some useful properties.
Given $f\colon\R^n\times(-1,0)\to\R$ we set
\begin{equation}\label{eq:weiss}
     W_3(f,r) = r^{-6}\int_{\{t=-r^2\}} (r^2|\nabla v|^2-\tfrac32v^2)G_n(x,t)dx,\quad r\in(0,1),
\end{equation}
where we recall the reversed heat kernel \eqref{eq:gaussian kernels}.
If $u\colon B_1\times(-1,1)\to[0,+\infty)$ solves \eqref{eq:SP} with
$(0,0)\in\Sigma$ and recalling the cutoff $\zeta$ defined in~\eqref{eq:spatial cutoff}, then \cite[Lemma 5.3]{FRS} yields
\begin{equation}\label{eq:almost mononicity weiss}
    \partial_r W_3(\zeta(u-p_2),r) \ge \frac1{r^7}\int_{\{t=-r^2\}} (Z(\zeta(u-p_2))-3\zeta(u-p_2))^2d\gamma_n - Ce^{-\frac1r}\quad\forall r\in(0,\tfrac12),
\end{equation}
for some $C>0$ depending only on $n, \zeta$ and $\|u(\cdot,0)\|_{L^\infty(B_1)}$, where $Z$ is defined in Section~\ref{sec:notation}.\\
Finally, since $\nabla G_n = -\frac y2G_n$, we have $f\divop(G_n\nabla g) = G_nf\cL g$ for all $f,g$ sufficiently regular. Thus, an integration by parts yields
\begin{equation}\label{eq:integration by parts}
        \int_{\R^n}\nabla f\cdot\nabla g\,d\gamma_n =
        - \int_{\R^n}f\cL g\,d\gamma_n.
\end{equation}

\subsection{An epiperimetric inequality} To show Theorem~\ref{teo:C3beta expansion}, we prove an epiperimetric inequality for \eqref{eq:weiss}. We will work in conformal coordinates $(y,s)$ given by
\begin{equation}\label{eq:conformal coordinates}
    (x,t) = (e^{-s/2}y,-e^{-s}).
\end{equation}
Given $f\colon \R^n\times(-1,0)$ we define $\tilde f\colon \R^n\times(0,+\infty)$ by
\begin{equation}\label{eq:conformal transformation}
    \tilde f(y,s) = e^{3s/2}f(x(y,s),t(s)).
\end{equation}
Notice that given $r>0$ and $f\colon\R^n\times(-\infty,0)$,
the parabolic 3-homogeneous rescaling
\[
    f_{3,r}(x,t)\coloneqq r^{-3}f(rx,r^2t)
\]
corresponds to a translation in time
\[
    \tilde f_{3,r}=\tilde f(\cdot,\cdot-2\log r).
\]
Moreover, if we set $v=u-p_2$ then
\begin{equation}\label{eq:conformal Stefan}
    (\cL_3-\partial_s)\tilde v = - e^{s/2}\chi_{\{\tilde u=0\}}\quad\text{in }B_{e^{s/2}}\times[0,+\infty),
\end{equation}
where $\cL_3$ is defined in Section~\ref{sec:notation}.\\
We define
\[
    \cP_3^+\coloneqq\{p \,:\, \cL_3p = 0 \text{ in }\R^n\setminus\{y_n=0\}\text{ and }\cL_3p\le0\text{ in }\R^n,
    p\equiv0\text{ in }\{y_n=0\}\}.
\]
We note that $p\in\cP_3^+$ if and only if $p=q(\cdot,-1)$ for some $q$ 3-homogeneous solution of \eqref{eq:ptop}.
It follows from \cite[Lemma 9.2]{FRS} that the 3-homogeneous extension in $\R^n\times(-\infty,0)$ of any $p\in \cP_3^+$ will be of the form
\begin{multline}\label{eq:cubic blow-ups}
    p(x',x_n,t) = (x_n)_+ (\tfrac {a^+}6x_n^2 + (a^+-b^+)t - \tfrac12B^+x'\cdot x')\\
    +(x_n)_- (\tfrac {a^-}6x_n^2 + (a^--b^-)t - \tfrac12B^-x'\cdot x'),
\end{multline}
where the parameters $a^\pm,b^\pm\in\R$, $B^\pm\in \mathrm{Sym}(n-1,\R)$ satisfy
\[
    B^++B^-\ge0,\quad b^\pm=\tr B^\pm\quad\text{and}\quad a^\pm\ge b^\pm
\]
and we write $x=(x',x_n)\in\R^{n-1}\times\R$ for all $x\in\R^n$.
We also note that the vector space generated by $\cP_3^+$ is
\[
    \cP_3 \coloneqq \{p\,:\, \cL_3 p = 0 \text{ in }\R^n\setminus\{y_n=0\}\text{ and }p\equiv0\text{ in }\{y_n=0\}\}.
\]
Setting $w= \zeta(u-p_2)$, in conformal coordinates the Weiss energy~\eqref{eq:weiss} corresponds to
\begin{equation}\label{eq:conformal weiss}
    W_3(\tilde{w},s) \coloneqq \int_{\R^n}(|\nabla \tilde{w}|^2-\tfrac32 \tilde{w}^2)d\gamma_n
\end{equation}
and the almost monotonicity formula \eqref{eq:almost mononicity weiss} reads
\begin{equation}\label{eq:conformal monotonicity weiss}
    \|\partial_s \tilde w(\cdot,s)\|_{L^2(\gamma_n)}^2 \le -\tfrac12\partial_s W_3(\tilde w,s) + C \exp(-e^{s/2})\quad\forall s>-2\log 2.
\end{equation}
Since we will need to absorb some higher order terms in $s$,
we prove an epiperimetric inequality for the following modified Weiss energy:
\begin{equation}\label{eq:modified weiss}
    \widetilde W_3(\tilde w,s) \coloneqq W_3(\tilde w,s) + e^{-s/2}.
\end{equation}

\begin{prop}[Epiperimetric inequality]\label{prop:epiperimetric stefan}
    For all $c', M>0$ there are constants $\eps_0,\delta_0,s_0>0$ depending only on $c',M,n$ such that the following holds:

    Let $u$ solve \eqref{eq:SP} in $B_1\times[-1,0]$ with $|u|\le M$ and $(0,0)\in\Sigma_{n-1}$ with blow-up $p_2$, and set $v=u-p_2$ and $w=\zeta v$ where $\zeta$ is as in \eqref{eq:spatial cutoff}.
    Assume in addition that there are $\sigma>s_0$ and $p_3\in\cP_3^+$ with $\partial_tp_3^\even\ge c'|x_n|$ such that
    \[
        \int_\sigma^{\sigma+1}\|\tilde w-p_3\|^2_{L^2(\gamma_n)}ds\le \delta_0^2.
    \]
    Then
    \[
        \widetilde W(\tilde w,\sigma+1) \le (1-\eps_0)\widetilde W(\tilde w,\sigma).
    \]
\end{prop}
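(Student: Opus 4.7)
The plan is to argue by contradiction via a blow-up/compactness scheme in the style of~\cite{Shi20}. Suppose the conclusion fails. Then there exist sequences $u_k$ (satisfying the stated bounds), times $\sigma_k\to+\infty$, and cubic profiles $p_{3,k}\in\cP_3^+$ with $\partial_tp_{3,k}^\even\ge c'|x_n|$ such that $d_k^2:=\int_{\sigma_k}^{\sigma_k+1}\|\tilde w_k-p_{3,k}\|_{L^2(\gamma_n)}^2\,ds\to0$ while $\widetilde W(\tilde w_k,\sigma_k+1)>(1-1/k)\widetilde W(\tilde w_k,\sigma_k)$. Choosing each $p_{3,k}$ so as to minimize $d_k^2$ over $\cP_3^+$ subject to the non-degeneracy constraint (an open condition under the bound $\partial_tp_3^\even\ge c'|x_n|$), the remainder $h_k:=\tilde w_k(\cdot,\cdot+\sigma_k)-p_{3,k}$ may be taken $L^2$-orthogonal to $\cP_3$ on $[0,1]\times\R^n$ with the measure $d\gamma_n\otimes ds$. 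Set $\hat h_k:=h_k/d_k$, so $\int_0^1\|\hat h_k\|_{L^2(\gamma_n)}^2\,ds=1$.

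The first step is to pass to a linear limit. A Taylor expansion of $W_3$ around $p_{3,k}$, using that $p_{3,k}$ is parabolically $3$-homogeneous (so $W_3(p_{3,k})=0$), shows that the leading-order term is the quadratic form $h\mapsto-\int h\,\cL_3 h\,d\gamma_n$, yielding $\widetilde W(\tilde w_k,\sigma_k)\lesssim d_k^2$ modulo exponentially small errors. Integrating the almost-monotonicity~\eqref{eq:conformal monotonicity weiss} on $[\sigma_k,\sigma_k+1]$ and dividing by $d_k^2$ then forces $\int_0^1\|\partial_s\hat h_k\|_{L^2(\gamma_n)}^2\,ds\to0$. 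Together with parabolic regularity away from $\{y_n=0\}$ (applied to the equation~\eqref{eq:conformal Stefan}, in which the source $-e^{s/2}\chi_{\{\tilde u_k=0\}}$ is controlled thanks to the non-degeneracy), this gives enough compactness to extract a subsequential limit $\hat h_\infty$ that is independent of $s$ and solves $\cL_3\hat h_\infty=0$ in $\R^n\setminus\{y_n=0\}$.

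The main obstacle is identifying the correct limit boundary condition on $\{y_n=0\}$. The one-sided constraint $u_k\ge0$, rewritten in terms of $h_k$ as a condition on the coincidence set of $p_{2,k}+p_{3,k}$, must be shown to pass to the limit as the linearized parabolic Signorini complementarity relation for $\hat h_\infty$: $\cL_3\hat h_\infty\le 0$ as a distribution, with the sign conditions on $\partial_n\hat h_\infty$ and $\hat h_\infty$ on $\{y_n=0\}$. The hypothesis $\partial_tp_{3,k}^\even\ge c'|x_n|$ is exactly what is needed to quantitatively control the coincidence set of $p_{3,k}$ and to exclude degenerations of the linearization. Granting this, $\hat h_\infty(\cdot)$ is the slice at $t=-1$ of a parabolically $\lambda$-homogeneous solution of~\eqref{eq:ptop} for some admissible $\lambda$, and the orthogonality to $\cP_3$ eliminates the $\lambda=3$ component.

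The proof is then finished by the frequency gap, Theorem~\ref{teo:frequency gap}: no $\lambda\in(2,3)$ is admissible, so every nonzero mode of $\hat h_\infty$ has an $\cL_3$-eigenvalue of absolute value at least some $\eta_0>0$ depending only on $n$. Reinstating the $s$-dependence at the level of $\hat h_k$, positive-eigenvalue modes decay exponentially in $s$ and give a positive contribution to $W_3$, while negative-eigenvalue modes grow and give a negative contribution; in both cases one gets the strict decrease
\[
    W_3(\tilde w_k,\sigma_k+1)\le W_3(\tilde w_k,\sigma_k)-\eta_0 d_k^2+o(d_k^2).
\]
Because $\sigma_k\to+\infty$, the correction $e^{-s/2}$ appearing in $\widetilde W$ is $o(d_k^2)$, and this contradicts $\widetilde W(\tilde w_k,\sigma_k+1)/\widetilde W(\tilde w_k,\sigma_k)\to 1$, proving the proposition.
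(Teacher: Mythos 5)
There is a genuine gap, and it sits exactly at the step you flag as ``the main obstacle.'' First, a structural problem: minimizing the distance over the constrained cone $\cP_3^+$ (with $\partial_tp_3^\even\ge c'|x_n|$) does \emph{not} make the remainder $L^2$-orthogonal to the linear space $\cP_3$; a constrained minimizer over a convex cone only yields one-sided variational inequalities. The paper keeps two objects: the given $p_k\in\cP_3^+$ (used only through the nondegeneracy hypothesis) and a separate projection $q_k\in\cP_3$ onto the \emph{vector space}, which is what produces the orthogonality relations, so your normalization step needs to be repaired along those lines. Second, and more seriously, your identification of the limit and the subsequent use of Theorem~\ref{teo:frequency gap} do not work. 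Since $\partial_s\hat h_k\to0$, the limit is $s$-independent, i.e.\ parabolically $3$-homogeneous; the frequency gap only excludes homogeneities $\lambda\in(2,3)$ and therefore says nothing about such a limit. What must be shown is that the limit vanishes \emph{identically} on $\{y_n=0\}$, so that it lies in $\cP_3$ and contradicts orthogonality plus unit norm. This is where the hypothesis $\partial_tp_3^\even\ge c'|x_n|$ actually enters: combined with the $L^2$--$L^\infty$ estimate and Lemma~\ref{lem:barrier}, it gives $\tilde w_k\le Ce^{-\sigma_k/2}$ on the thin space, and together with the quantitative bound $e^{-\sigma_k/2}\le (C/k)\,\delta_k^2$ (extracted from the near-constancy of the \emph{modified} Weiss energy, which is precisely why the correction $e^{-s/2}$ is added) one gets $f_\infty\equiv0$ on $\{y_n=0\}$. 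Your proposal never produces this Dirichlet condition; asserting that the one-sided constraint ``must be shown to pass to the limit as the linearized Signorini complementarity relation'' is exactly the missing argument, not a proof of it.

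The final step is also not sound as written. The mode-by-mode reasoning (``positive-eigenvalue modes decay, negative-eigenvalue modes grow'') presupposes that $\tilde w_k$ evolves by the linear Ornstein--Uhlenbeck flow, whereas it solves the nonlinear equation \eqref{eq:conformal Stefan} with the source $-e^{(s+\sigma_k)/2}\chi_{\{\tilde u_k=0\}}$, so eigenmodes do not evolve independently and the claimed inequality $W_3(\tilde w_k,\sigma_k+1)\le W_3(\tilde w_k,\sigma_k)-\eta_0 d_k^2+o(d_k^2)$ does not follow from the stated steps. Likewise, ``$e^{-s/2}=o(d_k^2)$ because $\sigma_k\to+\infty$'' is a non sequitur: both quantities tend to zero, and the comparison $e^{-\sigma_k/2}=o(d_k^2)$ is itself a nontrivial output of the almost-constancy of $\widetilde W$ (Step~2 of the paper's proof), needed both here and in the thin-space vanishing. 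Note finally that no spectral/dynamical argument is needed at all: once the limit is shown to be $s$-independent, to solve $\cL_3 f_\infty=0$ off $\{y_n=0\}$, and to vanish on $\{y_n=0\}$, it belongs to $\cP_3$, and the contradiction with the orthogonality to $\cP_3$ and the normalization is immediate.
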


We will make use of the following results.
\begin{lem}[{\cite[Lemma 10.2]{FRS}}]\label{lem:barrier}
    Let $u$ be a bounded solution of \eqref{eq:SP} in $B_1\times[-1,1]$. Given positive constants $c_0, C_0,r_0,R_0$, there is $\delta>0$ such that the following holds: Let $Q_2$ be any 2-homogeneous caloric polynomial satisfying $\Heat (x_nQ_2)=0$ and $\|Q_2\|_{L^2(C_1)}\le C_0$. Assume that
    \[
        \frac{(u-p_2)_r}{r^3}\le c_0 |x_n|t + C_0|x_n|^3 + Q_2(x')x_n+\delta\quad\text{in }\{-4\le t\le-1,|x|\le 2R_0\}
    \]
    for all $r\in(0,r_0)$.
    Then
    \[
        u(r\cdot,r^2\cdot)\le Cr^4\quad\text{in }\{x_n=0,|x|\le R_0,t=-1\}
    \]
    for all $r\in(0,r_0)$.
\end{lem}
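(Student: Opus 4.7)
The conclusion requires a gain of one power of $r$ compared to the naive bound obtained by restricting the hypothesis to $\{x_n=0\}$: there all the explicit terms vanish and the hypothesis directly yields only $u(rx',0,-r^2)\le r^3\delta$. The plan is a barrier argument in which the maximum principle for the obstacle problem converts an $O(r^3\delta)$ pointwise bound into the sharper $O(r^4)$ bound by combining the hypothesis with an iteration over scales.

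After rotating so that $p_2=\tfrac12 x_n^2$, I would observe that $u$ is subcaloric, $\Heat u=\chi_{\{u>0\}}\ge 0$, so bounds on $u$ from above can be obtained via the classical comparison principle against supercaloric majorants. Direct computation shows that the explicit cubic $R(x,t)=c_0|x_n|t+C_0|x_n|^3+Q_2(x')x_n$ in the right hand side of the hypothesis satisfies $\Heat R=(6C_0-c_0)|x_n|$ away from the axis (since $x_n Q_2(x')$ is caloric by assumption and $\Delta|x_n|^3=6|x_n|$ off $\{x_n=0\}$), and has a convex kink across the axis, namely $\partial_n R(0^+)-\partial_n R(0^-)=2c_0t\le-2c_0<0$ for $t\le-1$. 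Adding a bounded supercaloric corrector (e.g.\ a suitable multiple of $-|x|^3$, whose $\Heat$ dominates $(6C_0-c_0)|x_n|$ in absolute value) one obtains a function $\widetilde R$ which is a parabolic Signorini-type supersolution of the heat equation, and the hypothesis reads $g(x,t):=r^{-3}(u-p_2)(rx,r^2 t)\le\widetilde R+C_1(1+\delta)$ on the parabolic boundary of $A=\{|x|\le 2R_0,\,-4\le t\le -1\}$.

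The single-scale comparison then yields $g\le\widetilde R+C_1(1+\delta)$ in $A$, which only recovers the trivial bound on the axis. The missing factor of $r$ is extracted by running the same barrier argument at the sequence of rescaled scales $r,r/2,r/4,\ldots,r_0$: at each step the hypothesis at scale $r'$ provides the boundary data for a comparison on the cylinder of size $r'$, and exploiting that the $c_0|x_n|t$ piece in the upper bound has the \emph{strictly negative} sign $c_0|x_n|t\le-c_0|x_n|$ for $t\le-1$, one gains a fixed geometric factor at each doubling. Summing the resulting geometric series across scales produces the sharp $u\le Cr^4$ bound on the axis at time $-r^2$. The main obstacle is to ensure that the supercaloric correctors required at each scale remain uniformly bounded, so that the Signorini jump condition and the geometric gain at each step are preserved across all scales: the smallness of $\delta$ in the statement is precisely what is needed to close this induction, and organizing the bookkeeping carefully is the technical heart of \cite[Lemma 10.2]{FRS}, which the present paper cites rather than reproves.
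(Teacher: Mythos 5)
First, note that this paper does not prove the lemma at all: it is imported verbatim as \cite[Lemma 10.2]{FRS}, so there is no internal proof to compare your argument against, and your own closing sentence (deferring ``the technical heart'' to \cite{FRS}) already signals that what you wrote is a strategy outline rather than a proof.

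As an outline it has a genuine gap precisely where the lemma has content. Your single-scale comparison of the subcaloric $u$ against a supercaloric majorant built from $R=c_0|x_n|t+C_0|x_n|^3+Q_2(x')x_n$ (your computation $\Heat R=(6C_0-c_0)|x_n|$ off the axis and the jump $\partial_nR(0^+)-\partial_nR(0^-)=2c_0t\le-2c_0$ are correct, though this is a concave, not convex, kink) reproduces, as you concede, only the trivial bound $u\le \delta r^3$ on $\{x_n=0\}$. The entire assertion of the lemma is the upgrade from $\delta r^3$ to $Cr^4$, and for this you only claim that iterating over dyadic scales ``gains a fixed geometric factor at each doubling'' -- but no mechanism is given: you do not construct a barrier that transfers the negativity of the slab term $c_0|x_n|t$ to the axis, you do not explain why the per-step improvement is strong enough (it must amount to a factor $1/2$ per doubling to sum to the extra power of $r$), and you do not address uniformity in $Q_2$, whose size on $B_{2R_0}$ is only controlled by $C_0R_0^2$ and can dominate $c_0$, so the combined linear-in-$x_n$ coefficient $c_0t+Q_2(x')$ need not even be negative. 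Moreover, your scheme uses only that $u$ is bounded, nonnegative and subcaloric; the structure of the Stefan problem \eqref{eq:SP} -- $\partial_tu\ge0$ and the fact that $u-p_2$ is caloric in $\{u>0\}$ with $u-p_2=-p_2\le0$ on the contact set -- never enters, whereas it is exactly this obstacle-problem structure that the argument in \cite{FRS} exploits to convert the hypothesis into the fourth-order bound on the thin set. As written, the proposal records the easy part of the comparison and replaces the decisive quantitative step by an appeal to the reference being proved.
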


\begin{lem}[{\cite[Proposition 6.6]{FRS}}]\label{lem:closeness contactSet}
    Let $u \colon B_1\times(-1,1)\to[0,\infty)$ be a bounded solution of \eqref{eq:SP}, $(0,0)\in\Sigma_{n-1}$, and set $w\coloneqq u-p_2$.
    Then
    \[
        \{u(\cdot,t) = 0\}\cap B_r \subset \{x \mid \dist(x,\{p_2 = 0\}) \le Cr^2\}
    \]
    for all $r\in(0,1/2)$ and $t\ge-r^2$. In addition, the constant $C$ depends only on $n$ and $\|u(\cdot,0)\|_{L^\infty(B_1)}$.
\end{lem}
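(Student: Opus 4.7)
Since $u \ge 0$ and $\partial_t u \ge 0$, if $u(x, t) = 0$ for some $t \ge -r^2$ then $u(x, -r^2) = 0$ as well; hence $\{u(\cdot, t) = 0\} \subset \{u(\cdot, -r^2) = 0\}$ for all $t \ge -r^2$, and it suffices to establish the inclusion at $t = -r^2$. Rotating coordinates so that $\{p_2 = 0\} = \{x_n = 0\}$ (possible because $(0,0) \in \Sigma_{n-1}$), we have $p_2 = \tfrac{1}{2} x_n^2$, and the claim reduces to: if $x \in B_r$ and $u(x, -r^2) = 0$, then $|x_n| \le Cr^2$.

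\textbf{Barrier reformulation.} At a contact point $x_0 \in B_r$ with $u(x_0, -r^2) = 0$, one has $\tfrac{1}{2} (x_0)_n^2 = p_2(x_0) = -w(x_0, -r^2)$, where $w := u - p_2$. The function $w$ is super-caloric, $\Heat w = -\chi_{\{u=0\}} \le 0$, and vanishes together with $\nabla w$ at $(0,0)$. Thus the desired bound follows once we prove the quantitative lower bound
\[
    w(x,t) \ge -A r^4 \quad \text{for all } (x,t) \in B_r \times [-r^2, 0],
\]
since this immediately yields $|(x_0)_n| \le \sqrt{2A}\, r^2$.

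\textbf{Implementation and main obstacle.} To establish this $r^4$-bound on $w$, I would rescale $u_\rho(y, s) := \rho^{-2} u(\rho y, \rho^2 s)$ and exploit uniqueness of the $p_2$-blow-up at $(0,0) \in \Sigma_{n-1}$. The Caffarelli $C^{1,1}$-regularity gives uniform $C^{1,1}_{\text{loc}}$ bounds on $\{u_\rho\}$ depending only on $\|u\|_{L^\infty}$; combined with almost-monotonicity of the quadratic Weiss-type energy (analogous to~\eqref{eq:almost mononicity weiss}) and a Monneau-type second monotonicity formula, this produces $\|u_\rho - p_2\|_{L^\infty(C_1)} \le \sigma(\rho)$ with $\sigma(\rho) \to 0$ at some initial polynomial rate. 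The main technical obstacle is to upgrade this rate to $\sigma(\rho) = O(\rho^2)$, which translates exactly to the target $r^4$-bound on $w$ after undoing the rescaling. I would do so via a bootstrap/improvement-of-flatness scheme: a preliminary estimate $\sigma(\rho) \lesssim \rho^{2\alpha}$ for some $\alpha \in (0,1)$ already forces the contact set into a narrow tube $\{|x_n| \lesssim \rho^{1+\alpha}\}$ about the hyperplane, and on this tube the \emph{quadratic} vanishing of $p_2$ transverse to its zero set, together with the self-similarity of the Stefan problem, sharpens the rate at the next scale; iterating finitely many times delivers $\alpha = 1$, and hence the claimed $Cr^2$ bound on the distance of the contact set to $\{p_2 = 0\}$.
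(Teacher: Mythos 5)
Your reduction to the time slice $t=-r^2$ (via $u\ge0$, $\partial_t u\ge0$) and the observation that $p_2=\tfrac12x_n^2$ at a point of $\Sigma_{n-1}$ are fine, as is the remark that the conclusion would follow from $w\ge -Ar^4$ at the contact points. But the way you propose to obtain that bound does not work, and this is a genuine gap rather than a detail. Your scheme aims at $\|u_\rho-p_2\|_{L^\infty(C_1)}\le C\rho^2$, i.e.\ $|u-p_2|\le Cr^4$ (or at least $u-p_2\ge -Ar^4$) on all of $B_r\times[-r^2,0]$. This is false in general: at a maximal-stratum singular point the second blow-up is a nontrivial $3$-homogeneous $p_3\in\cP_3^+$, so $u-p_2$ is generically of size $r^3$ in $C_r$, and its \emph{negative} part is also of size $r^3$ at points with $|x_n|\sim r$ — see the explicit form \eqref{eq:cubic blow-ups}, where terms like $-\tfrac12(x_n)_\pm\,B^\pm x'\cdot x'$ are negative of order one on $C_1$ away from $\{x_n=0\}$. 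Consequently an improvement-of-flatness iteration for $u-p_2$ cannot "deliver $\alpha=1$": it is obstructed exactly at the cubic term, and no Monneau/Weiss-type monotonicity will push the two-sided rate past $r^3$. Nor does restricting attention to the thin strip containing the contact set rescue the argument at this level of information: even granting the full $C^{3+\beta}$ expansion of Theorem~\ref{teo:C3beta expansion} and using that $p_3$ vanishes linearly in $x_n$, a contact point at distance $d$ from $\{x_n=0\}$ only gives $\tfrac12 d^2\le C(dr^2+r^{3+\beta})$, i.e.\ $d\lesssim r^{(3+\beta)/2}$, which is weaker than $Cr^2$ since $\beta<1$; and invoking Theorem~\ref{teo:C3beta expansion} here would in any case be circular, because its proof (through the epiperimetric inequality) uses precisely this lemma.

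Note also that the paper does not prove this statement: it is quoted verbatim from \cite[Proposition 6.6]{FRS}, whose argument is of a different nature — a comparison/barrier argument exploiting $\partial_t u\ge0$ (so that the contact set only shrinks in time) together with the structure of the parabolic obstacle problem — not a flatness-improvement scheme for $u-p_2$. Any self-contained proof you write should follow a mechanism of that kind, since, as explained above, no pointwise expansion of $u-p_2$ of the order currently available can produce the $Cr^2$ containment.
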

\begin{lem}[{\cite[Theorem 4.16]{Wang}}]\label{lem:parabolic ABP}
    Let $w$ be such that either $(\cL_3-\partial_s) w\ge0$ or $\Heat w\ge0$ in $C_1$. Then
    \[
        \sup_{C_{1/2}} w \le C \left(\int_{C_1}w_+^2\right)^{1/2}
    \]
    for some dimensional $C>0$.
\end{lem}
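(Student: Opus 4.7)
The plan is to prove Lemma~\ref{lem:parabolic ABP} by a standard parabolic Moser iteration, handling both operators simultaneously by observing that on $C_1$ the drift and zeroth-order coefficients of $\cL_3-\partial_s$ are uniformly bounded, and hence the operator is of the same quality as $\Heat$ for a local boundedness argument.

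First I would reduce to $w_+\ge 0$: since $\Heat$ and $\cL_3-\partial_s$ are linear with nonpositive distributional contribution on the set $\{w\le 0\}$ when paired against the characteristic function of $\{w>0\}$, the positive part $w_+$ also satisfies the corresponding subsolution inequality, and the conclusion for $w_+$ yields the lemma. It then suffices to prove
\[
\sup_{C_{1/2}} w_+ \le C\|w_+\|_{L^2(C_1)}.
\]

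Next I would fix a sequence of radii $r_k = \tfrac12 + 2^{-k-1}\downarrow\tfrac12$ and parabolic cutoffs $\eta_k\in C_c^\infty(C_{r_k})$ with $\eta_k\equiv 1$ on $C_{r_{k+1}}$ and $|\nabla\eta_k|^2+|\partial_t\eta_k|\le C\cdot 4^k$. Testing the subsolution inequality against $\eta_k^2 w_+^{q-1}$ for $q\ge 2$ and integrating by parts gives the Caccioppoli-type energy estimate
\begin{equation*}
\sup_{t\in[-r_{k+1}^2,0]}\int_{B_{r_{k+1}}} w_+^q\,dx + \int_{C_{r_{k+1}}}|\nabla(w_+^{q/2})|^2 \le C q^2 \, 4^k \int_{C_{r_k}} w_+^q.
\end{equation*}
In the $\cL_3-\partial_s$ case the extra drift term $-\tfrac{x}{2}\cdot\nabla w_+$ is absorbed into the gradient energy via Young's inequality (using $|x|\le 1$ on $C_1$), while the zeroth-order term $\tfrac32 w_+$ contributes only a harmless lower-order piece; this costs nothing in the powers of $q$ and $4^k$. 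Applying the parabolic Sobolev embedding $\|f\|_{L^{2\kappa}(C_{r_{k+1}})}\le C(\|f\|_{L_t^\infty L_x^2}+\|\nabla f\|_{L^2})$ with $\kappa = (n+2)/n>1$ to $f=\eta_k w_+^{q/2}$ upgrades this to the reverse Hölder inequality
\begin{equation*}
\|w_+\|_{L^{\kappa q}(C_{r_{k+1}})} \le (C q^2 \, 4^k)^{1/q}\, \|w_+\|_{L^q(C_{r_k})}.
\end{equation*}
Iterating with $q_k = 2\kappa^k$ and letting $k\to\infty$, the product $\prod_k(Cq_k^2\, 4^k)^{1/q_k}$ converges because the exponents $1/q_k$ are summable, producing $\sup_{C_{1/2}} w_+ \le C\|w_+\|_{L^2(C_1)}$, which is the claim.

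The main obstacle I expect is not in either step individually but in running them cleanly in parallel for the two operators: one must check that the extra gradient term from $\cL_3$ does not destroy the constants in the Caccioppoli estimate at each stage. The key observation that makes this routine is the boundedness of $|x|$ on $C_1$, so that $\frac{x}{2}\cdot\nabla w_+$ can be dominated by $\frac12|\nabla(w_+^{q/2})|^2 + Cq^2 w_+^q$ and absorbed into the left-hand side. (As an alternative route, one could use the conformal change $(x,t)=(e^{-s/2}y,-e^{-s})$ to convert a subsolution of $\cL_3-\partial_s$ into a subsolution of the backwards heat equation and invoke the first case directly; but since the image of $C_1$ is not a standard parabolic cylinder, verifying containment $C_{1/2}\subset$ image makes the direct iteration cleaner.)
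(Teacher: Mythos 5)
Your proof is correct in substance: for these two specific operators a parabolic Moser iteration works exactly as you describe, since on $C_1$ the drift $-\tfrac x2\cdot\nabla$ is bounded by $\tfrac12$ and the zeroth-order term $\tfrac32$ is bounded, so both cases reduce to the standard local boundedness estimate for subsolutions of a uniformly parabolic equation with bounded coefficients, with constant depending only on $n$. The reduction to $w_+$ (or, equivalently, testing directly with $\eta^2 w_+^{q-1}$, which vanishes where $w\le0$), the Caccioppoli estimate, the parabolic Sobolev embedding with $\kappa=(n+2)/n$, and the convergence of the iteration product are all standard and correctly assembled; the only points to state carefully are that the cutoffs must vanish on the lateral and bottom parabolic boundary but not at the top slice $t=0$ (since $C_{1/2}=B_{1/2}\times(-1/4,0]$ includes the top), and that in the paper's applications $w$ is only a distributional subsolution (e.g.\ $\Heat w$ is a measure), so the subsolution inequality should be used in the weak/distributional sense — which is precisely the setting in which the energy method is most natural. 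Note, however, that the paper does not prove this lemma at all: it is quoted from Wang's regularity theory for fully nonlinear parabolic equations (an ABP/Krylov--Tso-type local maximum principle for viscosity subsolutions), which is stated in a non-divergence, fully nonlinear framework and with more general norms on the right-hand side. Your divergence-form Moser argument is a more elementary, self-contained route that suffices here because the two operators are linear with bounded coefficients on $C_1$; the citation buys generality (no divergence structure needed, arbitrary $L^p$ norms, viscosity framework) at the cost of invoking heavier machinery.
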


\begin{oss}\label{oss:sign wDeltaw}
    If $\tilde v$ solves \eqref{eq:conformal Stefan}, then
    \[
        \tilde v(\cL_3-\partial_s) \tilde v = \tilde p\chi_{\{\tilde u=0\}}\ge 0.
    \]
\end{oss}
\begin{lem}[\cite{Caffarelli77}]
    Let $u\colon B_1\times[-1,1]\to[0,+\infty)$ be a bounded solution of \eqref{eq:SP} with $u(0,0)=0$. Then
\begin{equation}\label{eq:local C11 estimates}
    \sup_{B_{1/2}\times[-1/2,0]} |D^2u|+|\partial_tu|\le C\|u(\cdot,0)\|_{L^\infty(B_1)}.
\end{equation}
\end{lem}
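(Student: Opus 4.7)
The plan is to reduce the $C^{1,1}_x \cap C^1_t$ bound to a quadratic growth estimate at free boundary points, and then combine it with interior parabolic estimates on the positivity set. Set $F = \partial\{u > 0\}$. On the interior of the contact set $\{u = 0\}$ both $D^2 u$ and $\partial_t u$ vanish, while on $\{u > 0\}$ the equation $\Heat u = 1$ holds classically, so interior parabolic Schauder estimates bound $D^2 u$ and $\partial_t u$ in terms of the local supremum of $u$ divided by the square of the parabolic distance to $F$. The lemma therefore reduces to the growth statement
\[
    \sup_{C_r(x_0, t_0)} u \le C r^2 \|u(\cdot, 0)\|_{L^\infty(B_1)}
\]
for every $(x_0, t_0) \in F \cap (B_{3/4} \times [-3/4, 0])$ and every $r$ below a fixed threshold.

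First I would prove the growth estimate by a Caffarelli-style contradiction/rescaling argument. If it fails, there exist sequences $(x_k, t_k) \in F$, $r_k \to 0$ and $M_k \to \infty$ such that the rescaling
\[
    v_k(x, t) \coloneqq \frac{u(x_k + r_k x, t_k + r_k^2 t)}{M_k r_k^2}
\]
satisfies $v_k \ge 0$, $\partial_t v_k \ge 0$, $v_k(0, 0) = 0$, $\|v_k\|_{L^\infty(C_1)} = 1$, and $\Heat v_k = M_k^{-1}\chi_{\{v_k > 0\}} \to 0$ in $L^\infty$. Interior parabolic regularity on the open sets $\{v_k > 0\}$, together with the uniform bound and the vanishing of $v_k$ on the (parabolically uniformly bounded) contact set, gives local compactness; a subsequence converges locally uniformly to a limit $v_\infty \ge 0$ on $\R^n \times (-\infty, 0]$ that is globally caloric, monotone in time, and satisfies $v_\infty(0, 0) = 0$. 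Monotonicity propagates this zero to $v_\infty(0, t) = 0$ for all $t \le 0$, and then the strong minimum principle for the heat equation forces $v_\infty \equiv 0$, contradicting $\|v_\infty\|_{L^\infty(C_1)} = 1$.

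With the growth estimate in hand, the pointwise $C^{1,1}$ bound follows routinely. Fix $(x_1, t_1) \in B_{1/2} \times [-1/2, 0]$ with $u(x_1, t_1) > 0$, let $d$ be its parabolic distance to $F$, and choose $(x_0, t_0) \in F$ realizing that distance. On $C_{d/2}(x_1, t_1) \subset \{u > 0\}$ the function $u - \tfrac{|x|^2}{2n}$ is caloric, so heat-equation interior estimates give
\[
    |D^2 u(x_1, t_1)| + |\partial_t u(x_1, t_1)| \le C\Big(d^{-2}\sup_{C_{d/2}(x_1, t_1)} u + 1\Big),
\]
and since $C_{d/2}(x_1, t_1) \subset C_{2d}(x_0, t_0)$, the growth estimate bounds the supremum by $C d^2 \|u(\cdot, 0)\|_{L^\infty(B_1)}$, cancelling the $d^{-2}$ factor.

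The main obstacle is the Liouville closure of the rescaling argument, which relies essentially on the monotonicity $\partial_t u \ge 0$ built into the Stefan problem: it is this monotonicity that propagates the single zero of $v_\infty$ at $(0,0)$ backward in time and unlocks the strong minimum principle. Without it, a general nonnegative caloric function on a half-space of time need not be identically zero just because it vanishes at one point, and the argument would collapse. Verifying the compactness of the $v_k$ so that the limit $v_\infty$ exists and is caloric requires a careful use of the regularity that $v_k$ inherits away from its own free boundary, without any circular appeal to the $C^{1,1}$ bound being proved.
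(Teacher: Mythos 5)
The paper itself does not prove this lemma (it is quoted from \cite{Caffarelli77}), so your argument stands or falls on its own, and as written it has a genuine structural gap at the covering step. Because $\partial_t u\ge 0$, the contact set shrinks in time, so for a point $(x_1,t_1)\in\{u>0\}$ the free boundary point $(x_0,t_0)$ realizing the parabolic distance $d$ will in general lie strictly in the past, $t_0<t_1$ (think of a point just after a cusp has disappeared). Then $C_{d/2}(x_1,t_1)=B_{d/2}(x_1)\times(t_1-d^2/4,t_1]$ contains times later than $t_0$, and the claimed inclusion $C_{d/2}(x_1,t_1)\subset C_{2d}(x_0,t_0)$ fails, since in this paper's notation $C_{2d}(x_0,t_0)$ is a backward cylinder ending at time $t_0$. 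Monotonicity in time makes this fatal rather than cosmetic: $u$ is largest precisely at those later times, so the backward growth estimate $\sup_{C_r(x_0,t_0)}u\le Cr^2$ produced by your blow-up does not bound $\sup_{C_{d/2}(x_1,t_1)}u$. What you actually need is quadratic growth \emph{forward} in time past a free boundary point, $u\le Cr^2$ on $B_r(x_0)\times(t_0,t_0+r^2]$, and that statement is essentially equivalent to the time-Lipschitz bound you are proving, so invoking it would be circular. The classical route decouples the time derivative first: since $\{u>0\}$ expands in time, the incremental quotients $h^{-1}(u(x,t+h)-u(x,t))\ge0$ are subcaloric (their heat operator is $h^{-1}(\chi_{\{u>0\}}(x,t+h)-\chi_{\{u>0\}}(x,t))\ge0$), so the local maximum principle bounds them by their $L^1$ norm on a larger cylinder, which telescopes in time to $C\|u\|_{L^\infty}$; this gives $0\le\partial_t u\le C\|u\|_{L^\infty}$ directly, after which $\Delta u=\chi_{\{u>0\}}+\partial_t u\in L^\infty$ and the spatial $C^{1,1}$ bound can be run slice by slice (or by your covering argument, now legitimately).

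There is a second gap in the blow-up/Liouville step itself. Your $v_k$ are normalized only on $C_1$; nothing controls $\|v_k\|_{L^\infty(C_R)}$ for $R>1$, so there is no compactness on $\R^n\times(-\infty,0]$ and no ``globally caloric'' limit as claimed. Worse, interior estimates give convergence only on compact subsets of $B_1\times(-1,0]$, so the near-maximum points of $v_k$ may drift to the lateral or initial boundary of $C_1$ and the limit can be identically zero, in which case the strong minimum principle yields no contradiction with $\|v_k\|_{L^\infty(C_1)}=1$. This is the standard pitfall of naive rescaling proofs of growth estimates; it is repaired by the usual maximal (dyadic) scale selection: choose $r_k$ so that $r_k^{-2}\sup_{C_{r_k}(x_k,t_k)}u$ is, up to a factor $2$, maximal among all larger admissible scales, which gives $\sup_{C_R}v_k\le 2R^2$ for every fixed $R$, hence both global compactness and nontriviality of the limit. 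With that fix the Liouville argument (monotonicity propagating the zero backward, then the strong minimum principle) is fine, but without the scale selection, and above all without an independent source of forward-in-time control, the proposal does not yet prove the lemma.
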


We will also need to control the errors introduced in the equations by multiplying with the cutoff $\zeta$ defined in \eqref{eq:spatial cutoff}, which corresponds to \cite[Lemma 5.2]{FRS} in conformal coordinates.
Setting $\zeta_\sigma(y) = \zeta(e^{-\sigma/2}y)$, given $v\colon B_{e^{\sigma/2}}\times(0,+\infty)$ sufficiently regular, we note that
\[
    \widetilde{\zeta v}(\cdot,\cdot+\sigma) = \zeta_\sigma \tilde v(\cdot,\cdot+\sigma).
\]
\begin{lem}\label{lem:exponential errors}
If $v=u-p_2$ where $u$ solves~\eqref{eq:SP} in $B_1\times[-1,1]$ and $\tilde w=\widetilde{\zeta v}(\cdot,\cdot+\sigma)$, then
\begin{multline}\label{eq:exponential errors}
    \|\cL_3\tilde w(\cdot,0)-\zeta_\sigma \cL_3\tilde v(\cdot,\sigma)\|_{L^2(\gamma_n)}+\|\partial_s\tilde w(\cdot,0)-\zeta_\sigma\partial_s\tilde v(\cdot,\sigma)\|_{L^2(\gamma_n)}\\
    \le C \exp(-e^{\sigma/2})\quad\forall \sigma>-2\log 2.
\end{multline}
for some constant $C>0$ depending only on $n, \|u(\cdot,0)\|_{L^\infty(B_1)}$ and $\zeta$.
\end{lem}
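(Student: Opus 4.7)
The plan is to identify each error as a commutator between multiplication by the cutoff $\zeta_\sigma$ and the operator $\cL_3$ (respectively $\partial_s$), and then exploit the fact that these commutators are supported far from the origin in the conformal coordinates, where the Gaussian weight $d\gamma_n$ provides super-exponential decay.

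First, applying~\eqref{eq:conformal transformation} to the product $\zeta v$ factorises as $\widetilde{\zeta v}(y,s)=\zeta(e^{-s/2}y)\tilde v(y,s)=\zeta_s(y)\tilde v(y,s)$, so that at $s=\sigma$ one has $\tilde w(\cdot,0)=\zeta_\sigma\tilde v(\cdot,\sigma)$. Since $\cL_3=\cL+\tfrac32$, the Leibniz rule gives the explicit commutator
\[
    \cL_3\tilde w(\cdot,0)-\zeta_\sigma\,\cL_3\tilde v(\cdot,\sigma)=\tilde v(\cdot,\sigma)\,\cL\zeta_\sigma+2\nabla\zeta_\sigma\cdot\nabla\tilde v(\cdot,\sigma),
\]
while differentiating $\widetilde{\zeta v}=\zeta_s\tilde v$ in $s$ and using $\partial_s\zeta_s(y)=-\tfrac12 y\cdot\nabla\zeta_s(y)$ yields
\[
    \partial_s\tilde w(\cdot,0)-\zeta_\sigma\,\partial_s\tilde v(\cdot,\sigma)=-\tfrac12(y\cdot\nabla\zeta_\sigma)\,\tilde v(\cdot,\sigma).
\]

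Because $\zeta\in C_c^\infty(B_{1/2})$ is identically $1$ on $B_{1/4}$, both $\nabla\zeta_\sigma$ and $\cL\zeta_\sigma$ are supported on the annulus $A_\sigma:=\{e^{\sigma/2}/4\le|y|\le e^{\sigma/2}/2\}$, corresponding under $x=e^{-\sigma/2}y$ to the fixed annulus $\{1/4\le|x|\le1/2\}\subset B_{3/4}$ at time $t=-e^{-\sigma}\in(-4,0)$. On that set the $C^{1,1}$ estimate~\eqref{eq:local C11 estimates} together with the obvious bounds on $p_2$ gives $|v|,|\nabla v|\le C$ uniformly, which translate via the conformal scaling into $|\tilde v(y,\sigma)|\le Ce^{3\sigma/2}$ and $|\nabla\tilde v(y,\sigma)|\le Ce^{\sigma}$ on $A_\sigma$. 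A direct computation also yields $\|\cL\zeta_\sigma\|_{L^\infty}+\|y\cdot\nabla\zeta_\sigma\|_{L^\infty}\le C$ and $\|\nabla\zeta_\sigma\|_{L^\infty}\le Ce^{-\sigma/2}$, so each of the two pointwise errors is bounded by $Ce^{3\sigma/2}\chi_{A_\sigma}$. Integrating against $d\gamma_n$ and using $|y|^2/4\ge e^{\sigma}/64$ on $A_\sigma$ gives
\[
    \|\mathrm{error}\|^2_{L^2(\gamma_n)}\le Ce^{(3+n/2)\sigma}\exp\!\bigl(-e^{\sigma}/64\bigr),
\]
and the double-exponential factor overwhelms the polynomial-in-$e^\sigma$ prefactor, so in particular this quantity is bounded by $C^2\exp(-2e^{\sigma/2})$ uniformly for $\sigma>-2\log 2$, yielding the desired estimate.

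No step really constitutes an obstacle: the Gaussian crush is extraordinarily wasteful once the commutator structure is recognised. The only point requiring minor care is ensuring that the constants remain uniform over the allowed range of $\sigma$, which is automatic because the $x$-annulus $\{1/4\le|x|\le1/2\}$ is fixed inside $B_{3/4}$ where the $C^{1,1}$ bound~\eqref{eq:local C11 estimates} applies uniformly.
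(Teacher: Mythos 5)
Your proof is correct and follows essentially the same route as the paper: isolate the commutator terms $\tilde v\,\cL\zeta_\sigma$, $\nabla\zeta_\sigma\cdot\nabla\tilde v$ and $(y\cdot\nabla\zeta_\sigma)\tilde v$, note they are supported where $|y|\sim e^{\sigma/2}$, bound $\tilde v,\nabla\tilde v$ there by $Ce^{3\sigma/2}$ via \eqref{eq:local C11 estimates}, and let the Gaussian factor $\exp(-ce^{\sigma})$ absorb everything down to $C\exp(-e^{\sigma/2})$. The only cosmetic difference is that you track the slightly sharper bounds $|\nabla\tilde v|\le Ce^{\sigma}$ and $\|\nabla\zeta_\sigma\|_\infty\le Ce^{-\sigma/2}$ separately, whereas the paper lumps them into a single $Ce^{3\sigma/2}$ estimate.
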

\begin{proof}
    We compute
    \[
        |\partial_s (\zeta_\sigma\tilde v(\cdot,\sigma)) - \zeta_\sigma\partial_s\tilde v(\cdot,\sigma)|\le |\tilde v(\cdot,\sigma) \partial_s\zeta_\sigma|
    \]
    and
    \[
        |\cL_3(\zeta_\sigma\tilde v)-\zeta_\sigma\cL_3\tilde v| \le |\tilde v\Delta \zeta_\sigma | + |\tilde vy\cdot\nabla \zeta_\sigma|+ |\nabla\tilde v(\cdot,\sigma)\cdot\nabla\zeta_\sigma|.
    \]
    Thus
    \[
        \int_{\R^n} \left|\partial_s (\zeta_\sigma\tilde v(\cdot,\sigma)) - \zeta_\sigma\partial_s\tilde v(\cdot,\sigma)\right|^2d\gamma_n\le \int_{\R^n}\tilde v(\cdot,\sigma)^2|\partial_s\zeta_\sigma|^2G_n(\cdot,-1)dx
    \]
    and
    \[
        \int_{\R^n} \left|\cL_3(\zeta_\sigma\tilde v)-\zeta_\sigma\cL_3\tilde v\right|^2d\gamma_n\le C\int_{\R^n}\tilde v(\cdot,\sigma)^2(|\Delta \zeta_\sigma |^2+|y\nabla \zeta_\sigma|^2)+|\nabla\tilde v(\cdot,\sigma)\cdot\nabla\zeta_\sigma|^2)G_n(\cdot,-1)dx.
    \]
    Note that \eqref{eq:spatial cutoff} yields $\zeta_\sigma\equiv1$ in $B_{e^{\sigma/2}/4}$ and $\zeta_\sigma\equiv0$ on $B_{e^\sigma/2}$. Moreover, $e^{(3+n)\sigma/2}G_n\le C\exp(-e^{\sigma}/100)$ in $\R^n\setminus B_{e^{\sigma/2}/4}$.
    Finally, recalling the relation between $\tilde v$ and $u$, \eqref{eq:local C11 estimates} yields
    \[
        |\nabla \tilde v(\cdot,\sigma)|+|\tilde v(\cdot,\sigma)|\le C e^{3\sigma/2}\quad \text{in }B_{\frac12e^{\sigma/2}}
    \]
    for some $C>0$ depending only on $n,\|u(\cdot,0)\|_{L^\infty(B_1)}$.
    Thus
    \begin{multline*}
        \|\partial_s (\zeta_\sigma\tilde v(\cdot,\sigma)) - \zeta_\sigma\partial_s\tilde v(\cdot,\sigma)\|^2_{L^2(\gamma_n)}
        \le C\|\tilde v(\cdot,\sigma)\|^2_{L^\infty(\frac12B_{e^{\sigma/2}})} \|\partial_s\zeta_\sigma\|^2_{\infty}e^{n\sigma/2}\exp(-e^{\sigma}/4)\\
        \le C\exp(-e^{\sigma/2}),
    \end{multline*}
    and similarly for the other term, as we wanted.
\end{proof}

We will also need the following compactness result.
\begin{lem}\label{lem:cpt}
    Let $f_k$ be such that
    \[
        \int_0^1(\|f_k(s)\|^2_{H^1(\gamma_n)}+\|\partial_s f_k\|_{L^2(\gamma_n)}^2)ds\le C.
    \]
    Then up to a subsequence
    \[
        f_k\to f_\infty\quad\text{strongly in }L^2((0,1);L^2(\gamma_n)).
    \]
\end{lem}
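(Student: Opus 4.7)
The statement is a special case of the Aubin--Lions compactness lemma adapted to the Gaussian-weighted spaces, so the strategy is to combine a compact spatial embedding with the time equicontinuity forced by the $\partial_s f_k$ bound. The plan is to (i) establish $H^1(\gamma_n)\hookrightarrow\hookrightarrow L^2(\gamma_n)$, (ii) derive uniform $1/2$-H\"older continuity of $s\mapsto f_k(s)$ in $L^2(\gamma_n)$, and (iii) combine the two by mollification in time plus an Arzel\`a--Ascoli / diagonal extraction.

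For step (i) the idea is exactly the one used in the proof of Lemma~\ref{lem:compact operator}: the Gaussian log-Sobolev inequality applied to $F=\max\{|f|,1\}$ gives a uniform smallness estimate
\[
    \int_{\{|x|>R\}} f^2\,d\gamma_n \le \frac{C(M)}{\log\lambda}+\lambda\int_{\{|x|>R\}}d\gamma_n
\]
for any $f$ with $\|f\|_{H^1(\gamma_n)}\le M$, and on each ball $B_R$ the measure $\gamma_n$ is equivalent to $\cL^n$, so the classical Rellich--Kondrachov theorem gives local precompactness in $L^2(B_R,\gamma_n)$. A diagonal argument in $R\to\infty$ then produces the compact embedding $H^1(\gamma_n)\hookrightarrow\hookrightarrow L^2(\gamma_n)$ on the whole space. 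Step (ii) is immediate from Cauchy--Schwarz: the hypothesis gives
\[
    \|f_k(s_1)-f_k(s_2)\|_{L^2(\gamma_n)}\le |s_1-s_2|^{1/2}\left(\int_0^1\|\partial_sf_k\|^2_{L^2(\gamma_n)}ds\right)^{1/2}\le C|s_1-s_2|^{1/2},
\]
so $\{f_k\}$ is uniformly $1/2$-H\"older continuous as maps $[0,1]\to L^2(\gamma_n)$.

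For step (iii) I would mollify in time: fix $\eps>0$ and set $f_k^\eps=f_k\ast\eta_\eps$. Jensen's inequality bounds $\|f_k^\eps(s)\|_{H^1(\gamma_n)}^2\le C/\eps$ for every $s\in[\eps,1-\eps]$, so by step (i) the family $\{f_k^\eps(s)\}_k$ is precompact in $L^2(\gamma_n)$ at each fixed time. A diagonal extraction over a countable dense set of times in $[\eps,1-\eps]$, combined with the equicontinuity of step (ii) (which is inherited by $f_k^\eps$), yields a subsequence for which $f_k^\eps$ converges in $C([\eps,1-\eps];L^2(\gamma_n))$. On the other hand, step (ii) also gives $\|f_k-f_k^\eps\|_{L^\infty((\eps,1-\eps);L^2(\gamma_n))}\le C\sqrt{\eps}$ uniformly in $k$. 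A second diagonal extraction, letting $\eps_j\to 0$, produces a subsequence of $\{f_k\}$ that is Cauchy in $L^2((0,1);L^2(\gamma_n))$, giving the desired strong convergence to some limit $f_\infty$.

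\textbf{Main obstacle.} Only step (i), the Gaussian compact embedding, is non-routine; however, the log-Sobolev argument required for it is essentially already carried out in Lemma~\ref{lem:compact operator}, so the present lemma really amounts to packaging Aubin--Lions in the Gaussian setting.
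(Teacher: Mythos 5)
Your proposal is correct, and its key ingredient coincides with the paper's: the compact embedding $H^1(\gamma_n)\hookrightarrow L^2(\gamma_n)$ obtained from the Gaussian log-Sobolev inequality exactly as in Lemma~\ref{lem:compact operator}. The only difference is in how the time variable is handled: the paper simply invokes the Aubin--Lions compactness theorem at that point, whereas you give a self-contained proof of that step, deriving the uniform $1/2$-H\"older bound $\|f_k(s_1)-f_k(s_2)\|_{L^2(\gamma_n)}\le C|s_1-s_2|^{1/2}$ from the $\partial_s f_k$ bound and then combining time mollification with a vector-valued Arzel\`a--Ascoli and diagonal extractions. This buys self-containedness (essentially a direct proof of the relevant Aubin--Lions/Simon statement) at the cost of length; both routes are valid. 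Two routine points you should make explicit if you write this out: first, the identity $f_k(s_1)-f_k(s_2)=\int_{s_2}^{s_1}\partial_s f_k\,ds$ requires working with the continuous-in-time representative of $f_k$, which exists because $\partial_s f_k\in L^2((0,1);L^2(\gamma_n))$; second, your Cauchy estimate in $L^2((0,1);L^2(\gamma_n))$ only controls $[\eps,1-\eps]$, so near the endpoints you need the uniform bound $\sup_{s\in[0,1]}\|f_k(s)\|_{L^2(\gamma_n)}\le C$ (which follows from the integrated $H^1(\gamma_n)$ bound at some time $s_0$ together with the H\"older estimate), so that the excluded tails contribute only $O(\eps)$ and the diagonal argument over $\eps_j\to0$ closes.
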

\begin{proof}
    As a consequence of the gaussian log-Sobolev inequality, $H^1(\gamma_n)$ is compactly embedded in $L^2(\gamma_n)$ (the proof is identical to the one given in Lemma~\ref{lem:compact operator}). Then the Aubin-Lions compactness Theorem \cite{AubinLions} yields the desired result.
\end{proof}

\begin{proof}[Proof of Proposition~\ref{prop:epiperimetric stefan}]
    We argue by contradiction. Assume there are $u_k$ solving \eqref{eq:SP} in $B_1\times[-1,0]$ with $|u_k|\le M$, $\sigma_k\to+\infty$ and $p_k\in\cP_3^+$ satisfying $\partial_tp_k\ge c'|x_n|$ such that, setting
    \[
        v_k = u-p_2,\quad \tilde w_k =\widetilde{\zeta v_k}(\cdot,\cdot+\sigma_k),
    \]
    they satisfy
    \[
        \int_0^1\|\tilde w_k-p_k\|^2_{L^2(\gamma_n)}ds\le \frac1k\quad\text{and}\quad \sigma_k\ge k
    \]
    but
    \[
        \widetilde W(\tilde w_k(\cdot,\cdot-\sigma_k),\sigma_k+1) \ge (1-\tfrac1k)\widetilde W(\tilde w_k(\cdot,\cdot-\sigma_k),\sigma_k).
    \]
    In particular, this together with~\eqref{eq:conformal monotonicity weiss} implies
    \begin{equation}\label{eq:constant weiss}
        W(\tilde w_k,0)-W(\tilde w_k,1) + \frac14e^{-\sigma_k/2}\le \frac2k \int_0^1W(\tilde w_k,s)ds\quad\text{for }k\gg1.
    \end{equation}
    Indeed, by definition of $\widetilde W$ we find
    \[
        W_3(\tilde w_k,0) - W_3(\tilde w_k,1) + e^{-\sigma_k/2} (1-\tfrac1k-e^{-1/2})\le \tfrac 1kW_3(\tilde w_k,0)
    \]
    On the other hand,
    \[
        W_3(\tilde w_k,0)\le \frac{k}{k-1} W_3(\tilde w_k,1) +e^{-(\sigma_k+1)/2}(\tfrac{k}{k-1}-e^{1/2})\le \frac{k}{k-1}W_3(\tilde w_k,1)\quad\text{for }k\gg1
    \]
    Finally, \eqref{eq:conformal monotonicity weiss} implies
    \[
        W_3(\tilde w_k,1)\le W_3(\tilde w_k,s) + \tfrac 1{10}e^{-\sigma_k/2}\quad\forall s\in (0,1)\quad\text{for }k\gg1,
    \]
    hence \eqref{eq:constant weiss} follows.\\
    For each $k$ take $q_k\in\cP_3$ such that
    \[
        q_k\in\mathrm{argmin}\left\{\int_0^1\|\tilde w_k(s)-q\|^2_{L^2(\gamma_n)}ds,\quad q\in\cP_3\right\}.
    \]
    Since $\cP_3$ is a vector space, $q_k$ satisfies
    \begin{equation}\label{eq:orthogonality conditions}
        \int_0^1 \int_{\R^n}q(\tilde w_k(s)-q_k)d\gamma_nds =0\quad\forall q\in\cP_3.
    \end{equation}

    \medskip\noindent\textit{Step 1.} For all $R>0$ there is $C(R)$ such that, for $k$ large enough,
    \begin{equation}\label{eq:L2 Linfty}
        |\tilde w_k - p_k|^2\le C(R)\left(\int_0^1\|\tilde w_k-p_k\|^2_{L^2(\gamma_n)}ds + e^{-\sigma_k}\right)\quad\text{in }B_R\times(R^{-2},1).
    \end{equation}
    Note that if $k$ is large enough then $\zeta_{\sigma_k}\equiv1$ on $B_{2R}\times(0,1)$, hence $\tilde w_k =\tilde v_k(\cdot,\cdot+\sigma_k)$ and
    \[
        (\cL_3-\partial_s) \tilde w_k = -e^{(s+\sigma_k)/2}\chi_{\{\tilde u(\cdot,\cdot+\sigma_k)=0\}}\quad\text{on }B_{2R}\times(0,1),\quad k\gg1.
    \]
    Since $p_k\in\cP_3^+$ then $(\cL_3-\partial_s)p_k=\cL_3p_k\le0$, thus
    \[
        (\cL_3-\partial_s) (\tilde w_k-p_k)\ge(\cL_3-\partial_s) \tilde w_k = -e^{(s+\sigma_k)/2}\chi_{\{\tilde u(\cdot,\cdot+\sigma_k)=0\}}\quad\text{on }B_{2R}\times(0,1),\quad k\gg1.
    \]
    Since for all $p\in\cP_3$ (as they are divisible by $x_n$)
    \begin{equation}\label{eq:polynomial lower bound}
        r^2p_2 + r^3p = r^2\frac12\left(x_n + r\frac{p}{x_n}\right)^2-\frac{r^4}2\frac{p^2}{x_n^2}\ge -C r^4.
    \end{equation}
    This implies
    \[
        \tilde u(\cdot,\cdot+\sigma_k) > \tilde p_2+p+Ce^{-\sigma_k/2}\ge0\quad\text{on }\{\tilde v_k-p-Ce^{-\sigma_k/2}>0\},
    \]
    hence
    \[
        (\cL_3-\partial_s) (\tilde w_k-p_k- C e^{-\sigma_k/2})_+ \ge0\quad\text{on }B_{2R}\times(0,1),\quad k\gg1.
    \]
    Thus, Lemma \ref{lem:parabolic ABP} (together with a standard covering argument) yields
    \begin{equation}\label{eq:partial L2 Linfty}
        (\tilde w_k-p_k)_+^2 \le C(R)\left(\int_0^1\|\tilde w_k-p\|^2_{L^2(\gamma_n)}ds+e^{-\sigma_k}\right)\quad\text{on }B_{R}\times(R^{-2},1),\quad k\gg1.
    \end{equation}
    Similarly, we have
    \[
        (\cL_3-\partial_s) (p_k-\tilde w_k) = (\cL_3-\partial_s) p_k + e^{\sigma_k/2}\chi_{\{\tilde u(\cdot,\cdot+\sigma)=0\}}\ge (\cL_3-\partial_s) p_k\quad\text{on }B_{2R}\times(0,1)
    \]
    for $k\gg1$.
    Since $\cL_3 p_k=0$ in $\R^n\setminus\{y_n=0\}$ and $p_k-\tilde w_k =-\tilde u_k\le0$ on $\{y_n=0\}$, this implies
    \[
        (\cL_3-\partial_s) (p_k-\tilde w_k)_+\ge0\quad\text{on }B_{2R}\times(0,1),\quad k\gg1,
    \]
    hence Lemma~\ref{lem:parabolic ABP} together with a covering argument yields
    \[
        (p_k-\tilde w_k)_+^2\le C \int_0^1\|p_k-\tilde w_k\|^2_{L^2(\gamma_n)}ds\quad\text{on }B_{R}\times(R^{-2},1),\quad k\gg1.
    \]
    Combining this with \eqref{eq:partial L2 Linfty} we find \eqref{eq:L2 Linfty}.

    \medskip\noindent\textit{Step 2.} There is $C>0$ independent from $k$ so that for $k$ large enough
    \begin{equation}\label{eq:almost homogeneity}
        \int_0^1\|\partial_s\tilde w_k\|^2_{L^2(\gamma_n)}ds+ \frac1{10}e^{-\sigma_k/2} \le \frac Ck\left(\int_0^1\|\tilde w_k-q_k\|^2_{L^2(\gamma_n)}ds\right).
    \end{equation}
    Note that \eqref{eq:conformal monotonicity weiss} and \eqref{eq:constant weiss} yield
    \begin{equation}\label{eq:exploit constancy weiss}
        \int_0^1 \|\partial_s\tilde w_k\|^2_{L^2(\gamma_n)}ds + \frac19e^{-\sigma_k/2}\le \frac1k\int_0^1W_3(\tilde w_k,s)ds
    \end{equation}
    for $k$ large enough.
    We now bound $W_3(\tilde w_k,s)$. To show this bound, we omit the dependence on $k$ to simplify the notation. Since $W_3(q)=0$ for all $q\in\cP_3$, we compute
    \begin{equation*}\begin{split}
        W_3(\tilde w,s) = W_3(\tilde w,s)-W_3(q)
        &= -W_3(q-\tilde w,s)- 2\int_{\R^n} (\nabla \tilde w\cdot \nabla(q-\tilde w)-\tfrac32 \tilde w(q-\tilde w))d\gamma_n\\
        &\le \tfrac32 \|\tilde w-q\|^2_{L^2(\gamma_n)} + 2\int_{\R^n} \cL_3\tilde w (q-\tilde w) d\gamma_n,
    \end{split}\end{equation*}
    where we used $W_3(q-\tilde w,s) \ge -\tfrac32\|\tilde w(s)-q\|^2_{L^2(\gamma_n)}$ and \eqref{eq:integration by parts}.
    Since $\tilde v(\cdot,\cdot+\sigma)$ solves \eqref{eq:conformal Stefan} and using \eqref{eq:exponential errors} we find
    \begin{multline}\label{eq:proof bound weiss 1}
        W_3(\tilde w,s)\le \tfrac32 \|\tilde w(s)-q\|^2_{L^2(\gamma_n)} + 2\int_{\R^n\times\{s\}}\partial_s\tilde w(q-\tilde w)d\gamma_n\\
        -2\int_{\R^n\times\{s\}}(q-\tilde w)\chi_{\{\tilde u(\cdot,\cdot+\sigma)=0\}}e^{(s+\sigma)/2}d\gamma_n+C\exp(-e^{\sigma/2}),
    \end{multline}
    for some constant $C>0$ depending only on $n,\zeta,M$.
    We now note that $\tilde w\chi_{\{\tilde u(\cdot,\cdot+\sigma)=0\}}=-\zeta_\sigma\tilde p_2 \chi_{\{\tilde u(\cdot,\cdot+\sigma)=0\}}\le0$. Moreover, Lemma~\ref{lem:closeness contactSet} yields $\{\tilde u(\cdot,\cdot+\sigma)=0\}\subset \{|y_n|\le Ce^{-\sigma/2}\}$ for some $C>0$ depending only on $n,M$. Finally, since $q$ is a polynomial vanishing on $\{y_n=0\}$, it must satisfy $q\ge -C|y_n|$ for some constant $C>0$ depending only on $\|q\|$, which in turn depends only on $n,M$.
    Thus, this yields
    \begin{equation}\label{eq:proof bound weiss 2}
        -2\int_{\R^n\times\{s\}}(q-\tilde w)\chi_{\{\tilde u(\cdot,\cdot+\sigma)=0\}}e^{(s+\sigma)/2}d\gamma_n\le Ce^{-\sigma/2}\quad \forall s\in(0,1)
    \end{equation}
    for some $C>0$ depending only on $n,M$.
    Moreover, by Hölder's inequality
    \begin{equation}\label{eq:proof bound weiss 3}
        2\int_{\R^n\times\{s\}}\partial_s\tilde w(q-\tilde w)d\gamma_n\le \|q-\tilde w(s)\|^2_{L^2(\gamma_n)} + \|\partial_s\tilde w(s)\|^2_{L^2(\gamma_n)}.
    \end{equation}
    Thus, \eqref{eq:proof bound weiss 1} together with \eqref{eq:proof bound weiss 2}, \eqref{eq:proof bound weiss 3} implies
    \[
        \int_0^1 W_3(\tilde w_k,s)ds \le C\int_0^1\|\tilde w_k-q_k\|^2_{L^2(\gamma_n)}ds + \int_0^1\|\partial_s\tilde w_k\|^2_{L^2(\gamma_n)}ds  + Ce^{-\sigma_k/2}.
    \]
    Using this to estimate the right hand side in \eqref{eq:exploit constancy weiss} we find
    \[
        (1-\tfrac1k)\int_0^1\|\partial_s\tilde w_k\|^2_{L^2(\gamma_n)}ds + (\tfrac19-\tfrac Ck)e^{-\sigma_k/2} \le \frac Ck\int_0^1\|\tilde w_k-q_k\|^2_{L^2(\gamma_n)}ds
    \]
    for some $C>0$ independent from $k$. 
    Choosing $k$ large enough \eqref{eq:almost homogeneity} follows.

    \medskip\noindent\textit{Step 3.} There is $C>0$ independent from $k$ such that
    \begin{equation}\label{eq:caccioppoli}
        \int_0^1\|\nabla (\tilde w_k-q_k)\|^2_{L^2(\gamma_n)}ds \le C\left(\int_0^1\|\tilde w_k-q_k\|^2_{L^2(\gamma_n)}ds+e^{-\sigma_k/2}\right)
    \end{equation}
    We omit the dependence on $k$ to simplify the notation.
    We first note that for all functions $w$ sufficiently regular and all $q\in\cP_3$ it holds
    \begin{equation}\label{eq:weiss decomposition}
        W_3(\tilde w-q,s) = W_3(\tilde w,s) + 2\int_{\R^n\times\{s\}}\tilde w\cL_3qd\gamma_n,
    \end{equation}
    since we can compute
    \begin{equation*}\begin{split}
        W_3(\tilde w,s) = W_3(\tilde w-q,s)+W_3(q) &+ 2\int_{\R^n\times\{s\}}(\nabla q\cdot\nabla(\tilde w-q)-\tfrac32 q(\tilde w-q))d\gamma_n\\
        &=W_3(\tilde w-q,s) -2\int_{\R^n\times\{s\}}\tilde w\cL_3qd\gamma_n,
    \end{split}\end{equation*}
    where we used \eqref{eq:integration by parts} and the fact that $W_3(q)=0$ and $q\cL_3q\equiv0$ for all $q\in\cP_3$.
    By rearrangement \eqref{eq:weiss decomposition} follows.\\
    Note also that \eqref{eq:integration by parts} yields, for all sufficiently regular functions $\tilde w$,
    \begin{equation}\label{eq:proof caccioppoli 1}\begin{split}
        W_3(\tilde w,s) = -\int_{\R^n\times\{s\}}\tilde w\cL_3\tilde wd\gamma_n&=-\int_{\R^n\times\{s\}}q\cL_3\tilde wd\gamma_n - \int_{\R^n\times\{s\}} (\tilde w-q)\cL_3\tilde wd\gamma_n\\
        &= -\int_{\R^n\times\{s\}}\tilde w\cL_3qd\gamma_n- \int_{\R^n\times\{s\}} (\tilde w-q)\cL_3\tilde wd\gamma_n.
    \end{split}\end{equation}
    Using now \eqref{eq:conformal Stefan} and \eqref{eq:exponential errors} we have
    \begin{multline}\label{eq:proof caccioppoli 2}
        -\int_{\R^n\times\{s\}}(\tilde w-q)\cL_3\tilde w d\gamma_n\le -\int_{\R^n\times\{s\}}(\tilde w-q)\partial_s\tilde wd\gamma_n\\
        + \int_{\R^n\times\{s\}} (\tilde w-q)e^{(s+\sigma)/2}\chi_{\{\tilde u(\cdot,\cdot+\sigma)=0\}}d\gamma_n+C\exp(-e^{\sigma/2})
    \end{multline}
    We now estimate the second term in \eqref{eq:proof caccioppoli 2} similarly to \eqref{eq:proof bound weiss 2}. Indeed,
    by \eqref{eq:polynomial lower bound} we have
    \[
        \tilde w -q=-e^{(s+\sigma)/2}y_n^2/2-q\le Ce^{-(s+\sigma)/2},
    \]
    hence
    \[
        (\tilde w-q)e^{(s+\sigma)/2}\chi_{\{\tilde u(\cdot,\cdot+\sigma)=0\}}\le C\chi_{\{\tilde u(\cdot,\cdot+\sigma)=0\}}.
    \]
    Since $\{\tilde u(\cdot,\cdot+\sigma)=0\}\subset\{|y_n|\le Ce^{-\sigma/2}\}$ by Lemma~\ref{lem:closeness contactSet}, we compute
    \begin{equation}\label{eq:proof caccioppoli 3}
        \int_{\R^n\times\{s\}}(\tilde w-q)e^{(s+\sigma)/2}\chi_{\{\tilde u(\cdot,\cdot+\sigma)=0\}}d\gamma_n\le C e^{-\sigma/2}.
    \end{equation}
    We can estimate the first term in \eqref{eq:proof caccioppoli 2} by using \eqref{eq:exploit constancy weiss} and computing
    \begin{multline}\label{eq:proof caccioppoli 4}
        \int_0^1\int_{\R^n}(\tilde w-q)\partial_s\tilde wd\gamma_nds\le\left(\int_0^1\|\tilde w-q\|^2_{L^2(\gamma_n)}ds\right)^{1/2}\left(\int_0^1\|\partial_s\tilde w\|^2_{L^2(\gamma_n)}ds\right)^{1/2}\\
        \le \left(\frac1k\int_0^1\|\tilde w-q\|^2_{L^2(\gamma_n)}\right)^{1/2}\left(\int_0^1W_3(\tilde w,s)ds\right)^{1/2}\\\le \frac1{2k}\int_0^1\|\tilde w-q\|^2_{L^2(\gamma_n)}ds + \frac12\int_0^1W_3(\tilde w,s)ds.
    \end{multline}
    Thus, integrating \eqref{eq:proof caccioppoli 1} in $s\in(0,1)$ and using \eqref{eq:proof caccioppoli 2} together with \eqref{eq:proof caccioppoli 3}, \eqref{eq:proof caccioppoli 4} we find
    \[
        \frac12\int_0^1W_3(\tilde w,s)ds + \int_0^1\int_{\R^n}\tilde w\cL_3qd\gamma_nds \le \frac1{2k}\int_0^1\|\tilde w-q\|^2_{L^2(\gamma_n)}ds+Ce^{-\sigma/2}.
    \]
    Now \eqref{eq:weiss decomposition} yields
    \[
        \frac12\int_0^1 W_3(\tilde w-q,s)ds\le \frac1{2k}\int_0^1\|\tilde w-q\|^2_{L^2(\gamma_n)}ds+Ce^{-\sigma/2},
    \]
    thus
    \[
        \int_0^1\|\nabla(\tilde w_k-q_k)\|^2_{L^2(\gamma_n)}ds\le \left(\frac32+\frac1k\right)\int_0^1\|\tilde w_k-q_k\|^2_{L^2(\gamma_n)}ds+ Ce^{-\sigma_k/2},
    \]
    as we wanted.

    \medskip\noindent\textit{Step 4.} We conclude. Set
    \[
        \delta^2_k\coloneqq \int_0^1\|\tilde w_k-q_k\|^2_{L^2(\gamma_n)}ds,\quad f_k\coloneqq \frac{\tilde w_k-q_k}{\delta_k}.
    \]
    Note that \eqref{eq:almost homogeneity} implies $e^{-\sigma_k/2}=o(\delta_k^2)$. Thus, by \eqref{eq:caccioppoli} and \eqref{eq:almost homogeneity} there is a constant $C>0$ independent from $k$ such that
    \[
        \|f_k\|_{L^2((0,1);H^1(\gamma_n))} + \|\partial_sf_k\|_{L^2((0,1);L^2(\gamma_n))}\le C.
    \]
    By Lemma~\ref{lem:cpt} there is $f_\infty\in L^2((0,1);L^2(\gamma_n))$ such that up to a subsequence
    \[
        f_k\to f_\infty\quad\text{strongly in }L^2((0,1);L^2(\gamma_n)).
    \]
    Note that by \eqref{eq:orthogonality conditions} and strong convergence in $L^2((0,1);L^2(\gamma_n))$ we have
    \begin{equation}\label{eq:contradiction}
        \int_0^1\int_{\R^n}qf_\infty d\gamma_nds =0\quad\forall q\in\cP_3 \quad\text{and}\quad\int_0^1\|f_\infty\|^2_{L^2(\gamma_n)}ds=1.
    \end{equation}
    \eqref{eq:almost homogeneity} yields $\partial_sf_k\to0$ as $k\to\infty$, thus
    \[
        \partial_sf_\infty\equiv0\quad\text{in }\R^n\times(0,1)
    \]
    and, since
    \[
        (\cL_3-\partial_s) f_k = 0\quad\text{in }B_{e^{\sigma_k/2}/10}\times(0,1)\setminus \{|y_n|>Ce^{-\sigma_k/2}\},
    \]
    letting $k\to+\infty$ and using that $\partial_sf_\infty\equiv0$ we find
    \[
        \cL_3f_\infty=0\quad\text{on }\R^n\setminus\{y_n\neq0\}.
    \]
    Moreover, recalling the relation between $u$ and $\tilde w_k$, since $\partial_tp_3\ge c'|y_n|$ and $\int_0^1\|\tilde w_k-p_k\|^2ds\to0$, for all $\delta,R_0>0$ \eqref{eq:L2 Linfty} yields, provided $k$ is large enough,
    \[
        \frac{(u_k-p_2)_{r_k}}{r_k^3} \le -c'|x_n| + C_0|x_n|^3+ x_nQ_2+\delta\quad\text{in }\{-4\le t\le-1,|x|\le 2R_0\}.
    \]
    Thus Lemma \ref{lem:barrier} yields
    \[
        \tilde w_k \le Ce^{-\sigma_k/2}\quad\text{in }B_R\times [R^{-2},1]\cap\{y_n=0\}.
    \]
    for $k$ large enough. Since $q\equiv0$ on $\{y_n=0\}$ and $e^{-\sigma_k/2}=o(\delta_k^2)$, this yields
    \[
        0\le f_k=\frac{\tilde w_k}{\delta_k} \le C\frac{e^{-\sigma_k/2}}{\delta_k}=o(1)\quad\text{in }B_R\times [R^{-2},1]\cap\{y_n=0\}.
    \]
    Letting $k\to\infty$ we find that $f_\infty$ solves
    \[\begin{cases}
        \cL_3f_\infty=0     &\text{in }\R^n\setminus\{y_n=0\},\\
        f_\infty\equiv0     &\text{in }\{y_n=0\}.
    \end{cases}\]
    As a consequence $f_\infty\in\cP_3$, contradicting \eqref{eq:contradiction}.
\end{proof}

\subsection{Proof of $C^{3+\beta}$ expansion}
Before proving the main result of this section we recall few useful facts.
Given $f\colon\R^n\times(-1,0)$ and the reverse heat kernel $G_n$ (see \eqref{eq:gaussian kernels}), we define the parabolic frequency as
\[
    \phi(r,f) = \frac{D(r,f)}{H(r,f)}
\]
where
\[
    D(r,f) = r^2\int_{\R^n}|\nabla f(\cdot,-r^2)|^2G(\cdot,-r^2)dx,\quad H(r,f) = \int_{\R^n}f^2(\cdot,-r^2)G(\cdot,-r^2)dx.
\]
\begin{lem}[{\cite[Corollary 8.5]{FRS}}]\label{lem:uniform cubic decay}
    Let $u\colon B_1\times[-1,1]\to[0,M]$ solve \eqref{eq:SP}, satisfying \eqref{eq:condition nondegeneracy partial_t} with $(0,0)\in\Sigma_{n-1}$ and first blow-up $p_2$.
    Then there is $C>0$ depending only on $c,M$ such that
    \[
        C^{-1}r^6\le H(\zeta(u-p_2),r)\le Cr^6\quad \forall r\in(0,1/2).
    \]
\end{lem}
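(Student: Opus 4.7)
The strategy is to pin down the parabolic frequency $\phi(r,v):=D(r,v)/H(r,v)$ of $v=\zeta(u-p_2)$ at the value $3$. Both bounds will follow from the algebraic identity
\[
    W_3(v,r) = r^{-6}H(v,r)\bigl(\phi(r,v)-\tfrac32\bigr),
\]
combined with the almost-monotonicity of $W_3$ in \eqref{eq:almost mononicity weiss}.

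For the upper bound I would use Theorem \ref{teo:frequency gap} together with standard Almgren-type monotonicity. At any $(x_0,t_0)\in\Sigma_{n-1}$ every parabolic blow-up of $v$ is a $\lambda$-homogeneous solution of the Signorini problem \eqref{eq:ptop} with $\lambda>2$, the quadratic part having been removed; the frequency gap then forces $\lambda\ge 3$. Almost-monotonicity of $\phi$ (itself a consequence of \eqref{eq:almost mononicity weiss}) gives $\phi(r,v)\to\lambda\ge 3$ and, via a compactness argument over $\cS(M,c,\rho)$, a uniform $r_0>0$ with $\phi(r,v)\ge 5/2$ for $r<r_0$. On the other hand, \eqref{eq:almost mononicity weiss} combined with a direct estimate of $W_3(v,\rho/100)$ via the $L^\infty$ bound on $u$ gives $W_3(v,r)\le C(M,\rho)$. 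The identity above then yields $H(v,r)\le W_3(v,r)\cdot r^6\le Cr^6$.

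For the lower bound I would argue by contradiction via a blow-up/compactness argument. Suppose there exist $u_k\in\cS(M,c,\rho)$, singular points $(x_k,t_k)\in\Sigma_{n-1}$ and scales $r_k<\rho/100$ with $H(v_k,r_k)/r_k^6\to 0$. Set $\hat v_k(y,\tau)=r_k^{-3}v_k(x_k+r_ky,t_k+r_k^2\tau)$; by \eqref{eq:local C11 estimates} the $\hat v_k$ are uniformly bounded in $C^{1,1}_{\mathrm{loc}}$, so along a subsequence $\hat v_k\to\hat v_\infty$ in $C^1_{\mathrm{loc}}$. The analysis from the upper bound identifies $\hat v_\infty$ as a $3$-homogeneous solution of \eqref{eq:ptop}. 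The assumption gives $\|\hat v_\infty(\cdot,-1)\|_{L^2(\gamma_n)}=0$, and $3$-homogeneity then forces $\hat v_\infty\equiv 0$ on $\R^n\times(-\infty,0]$. But the rescaled nondegeneracy \eqref{eq:condition nondegeneracy partial_t} yields
\[
    \fint_{C_1}\partial_\tau\hat v_k = r_k^{-1}\fint_{C_{r_k}(x_k,t_k)}\partial_tu_k\ge c,
\]
which passes to the limit to give $\fint_{C_1}\partial_\tau\hat v_\infty\ge c>0$, contradicting $\hat v_\infty\equiv 0$.

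The main obstacle is the rigidity step: identifying the blow-up limit $\hat v_\infty$ as a $3$-homogeneous solution of the Signorini problem, and ensuring all estimates are uniform over the class $\cS(M,c,\rho)$. This requires transferring the almost-monotonicity of $\phi$ and the closeness-of-contact-set bound (Lemma \ref{lem:closeness contactSet}) to the blown-up sequence, together with a unique continuation argument for parabolic solutions to conclude $\hat v_\infty\equiv 0$ from its vanishing on a time slice; these are all standard ingredients in this setting but must be assembled carefully to preserve uniformity.
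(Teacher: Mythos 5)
Your overall scheme is the right one, and in fact it is the scheme of the result the paper merely quotes: the paper does not reprove this lemma, its ``proof'' is a citation of \cite[Corollary 8.5]{FRS} with the single observation that the constant there depends only on $M,c$. Your upper bound (frequency limit $3$ from the gap theorem, almost-monotonicity of $\phi$, the identity $W_3=r^{-6}H(\phi-\tfrac32)$, and a bound on $W_3$ at a fixed scale) is essentially sound, with one caution: inside this paper the clean almost-monotonicity \eqref{eq:frequency almost monotonicity} is itself proved \emph{using} Lemma~\ref{lem:uniform cubic decay} (to absorb the $H$ in the denominator of the error term), so to avoid circularity you must work with the original FRS version of the frequency monotonicity, whose error is of the form $e^{-2/r}/H(r,\cdot)$, or argue directly from \eqref{eq:almost mononicity weiss}. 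Also, no compactness over the class is really needed for the uniform $r_0$: the error in the monotonicity depends only on $n,M,c$, so $\phi(r)\ge 5/2$ for $r<r_0(n,M,c)$ follows by integrating from $0^+$, and the range $r\in[r_0,1/2]$ is handled trivially by $H\le C(M)$.

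The genuine gap is in the lower bound. The claim that the cubic rescalings $\hat v_k=r_k^{-3}v_k(x_k+r_k\cdot,t_k+r_k^2\cdot)$ are ``uniformly bounded in $C^{1,1}_{\loc}$ by \eqref{eq:local C11 estimates}'' is false: the estimates \eqref{eq:local C11 estimates} are invariant under the quadratic parabolic scaling, so after dividing by $r_k^3$ they only give $|D^2_y\hat v_k|+|\partial_\tau \hat v_k|\le Cr_k^{-1}$, and the sup bound only gives $|\hat v_k|\le \omega(r_k)/r_k$ on $C_1$, neither of which is uniform. Uniform local bounds on the cubic rescalings are essentially \emph{equivalent} to the upper bound of this very lemma combined with $L^2$--$L^\infty$ estimates (this is exactly what \eqref{eq:a priori estimates} records, citing \cite[Corollary 6.2, Lemma 6.5]{FRS} \emph{and} Lemma~\ref{lem:uniform cubic decay}), so as written your compactness step is unjustified, and if repaired by quoting those a priori estimates it becomes circular. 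Two clean fixes: (i) normalize by $H(r_k)^{1/2}$ instead of $r_k^3$, as in \cite[Proposition 6.7]{FRS} (this is how the paper argues in Lemma~\ref{lem:cleaning nondegeneracy dt}), and derive the contradiction from $\fint_{C_1}\partial_\tau\tilde w_k\ \ge\ c\,r_k^3/H(r_k,\zeta v_k)^{1/2}\to\infty$ while the limit has finite time derivative; or (ii) first establish the upper bound and then upgrade the resulting $L^2(\gamma_n)$ control to locally uniform bounds via Lemma~\ref{lem:parabolic ABP}-type arguments before passing to the limit. In either case, the identification of the limit as a $3$-homogeneous solution (needed to go from vanishing on the slice $\{t=-1\}$ to vanishing on $C_1$; no unique continuation is needed once homogeneity is known) requires frequency pinching uniform in $k$ and Gaussian tail control for the slice-$L^2$ convergence, which you acknowledge only in passing; these are precisely the points where the argument must be assembled carefully.
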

\begin{proof}
    It follows from the proof of \cite[Corollary 8.5]{FRS}, noting that the constant depends only on $M,c$.
\end{proof}
\begin{lem}[{\cite[Proposition 5.4]{FRS}}]
    Let $u\colon B_1\times[-1,1]\to[0,M]$ solve \eqref{eq:SP}, satisfying \eqref{eq:condition nondegeneracy partial_t} with $(0,0)\in\Sigma_{n-1}$ and first blow-up $p_2$.
    Then
\begin{equation}\label{eq:frequency almost monotonicity} 
    \frac d{dr}\phi(\zeta(u-p_2),r)\ge -C\exp(-1/2r)
\end{equation}
for some $C>0$ depending only on $n$, $\|u\|_{L^\infty(C_1)}$ and $c$.
\end{lem}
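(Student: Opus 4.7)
The plan is to carry out the standard Poon-type frequency monotonicity for $\phi(w,\cdot)$ with $w = \zeta(u-p_2)$, tracking the two sources of non-caloricity: the cutoff $\zeta$, and the obstacle identity $\Heat(u-p_2) = -\chi_{\{u=0\}}$ (which follows from $\Heat u = \chi_{\{u>0\}}$ and $\Heat p_2 = \Delta p_2 = 1$ after rotating so that $\{p_2 = 0\} = \{x_n = 0\}$). Both sources are controlled via the Gaussian weight $G_n(\cdot,-r^2)$: the cutoff contribution is exponentially small because $\nabla\zeta$ and $\Delta\zeta$ are supported on $\{|x| \geq 1/4\}$, while the obstacle contribution is small thanks to the thinness of the contact set given by Lemma~\ref{lem:closeness contactSet}.

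The cleanest setup is the conformal framework of Section~\ref{sec:C3beta expansion}. Setting $s = -2\log r$, a direct change of variables gives $H(r,w) = r^6\tilde H(s)$ and $D(r,w) = r^6\tilde D(s)$, so that $\phi(r,w) = \tilde\phi(s) := \tilde D(s)/\tilde H(s)$ with $\tilde H = \|\tilde w\|^2_{L^2(\gamma_n)}$ and $\tilde D = \|\nabla \tilde w\|^2_{L^2(\gamma_n)}$. Since $ds/dr = -2/r$, the claim is equivalent to showing $\tilde\phi'(s) \leq C\exp(-e^{s/2}/2)$. Differentiating and applying \eqref{eq:integration by parts} yields
\[
    \tilde\phi'(s)\,\tilde H(s) = -2\int \partial_s\tilde w\,(\cL\tilde w + \tilde\phi\,\tilde w)\,d\gamma_n,
\]
and in the caloric case $\partial_s\tilde w = \cL_3\tilde w = \cL\tilde w + \tfrac{3}{2}\tilde w$, expanding the integrand and applying Cauchy--Schwarz via $(\int \tilde w\,\cL\tilde w\,d\gamma_n)^2 \leq \tilde H\int (\cL\tilde w)^2\,d\gamma_n$ gives $\tilde\phi'(s) \leq 0$, the classical monotonicity.

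For our non-caloric $\tilde w$, one writes $\partial_s\tilde w = \cL_3\tilde w + F$, so that the correction contributes at most $-(2/\tilde H)\int F\,(\cL\tilde w + \tilde\phi\,\tilde w)\,d\gamma_n$ to $\tilde\phi'$. By \eqref{eq:conformal Stefan} and Lemma~\ref{lem:exponential errors} (applied at $\sigma = s$), one decomposes $F = \zeta_s\,e^{s/2}\chi_{\{\tilde u=0\}} + F_{\mathrm{cut}}$ with $\|F_{\mathrm{cut}}\|_{L^2(\gamma_n)} \leq C\exp(-e^{s/2})$, and $\tilde H \geq c>0$ by Lemma~\ref{lem:uniform cubic decay}. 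The cutoff part is then bounded by $C\exp(-e^{s/2})\,\|\cL\tilde w + \tilde\phi\,\tilde w\|_{L^2(\gamma_n)}$, where the second factor is at most polynomial in $e^{s/2}$ by the local $C^{1,1}$-bounds \eqref{eq:local C11 estimates}, yielding the required double-exponentially small error. The main obstacle is the contact-set contribution: here one uses $\{\tilde u = 0\}\cap B_1^y \subset \{|y_n| \leq Ce^{-s/2}\}$ from Lemma~\ref{lem:closeness contactSet} together with the pointwise identity $\tilde w = -\zeta_s\,\tilde p_2$ on $\{\tilde u = 0\}$, which gives $|\tilde w| \lesssim e^{-s/2}$ on the contact set and explicit expressions for $\cL\tilde w$ there, and combines these with the favorable sign coming from $\tilde u \geq 0$ to absorb this term into an exponentially small error. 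Converting back via $\frac{d}{dr}\phi = -(2/r)\tilde\phi'(s)$ then yields the claim; see \cite[Proposition~5.4]{FRS} for the careful bookkeeping.
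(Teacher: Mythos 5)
Your reduction to conformal variables, the identity $\tilde\phi'(s)\,\tilde H(s)=-2\int\partial_s\tilde w\,(\cL\tilde w+\tilde\phi\,\tilde w)\,d\gamma_n$, the treatment of the cutoff errors via Lemma~\ref{lem:exponential errors}, and the use of Lemma~\ref{lem:uniform cubic decay} to bound $\tilde H$ from below are all fine; that last point is in fact the only new ingredient in the paper, whose proof consists of citing \cite[Proposition 5.4]{FRS} and observing that $H(r,\zeta(u-p_2))\ge C^{-1}r^6$ (this is where the constant $c$ enters) turns the FRS error $e^{-2/r}/H$ into $e^{-1/r}$ with uniform constants, letting their truncation parameter $\gamma\to\infty$. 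The genuine gap is in your handling of the obstacle term, which is the heart of the matter. In your organization that term is $-\tfrac2{\tilde H}\int F(\cL\tilde w+\tilde\phi\,\tilde w)\,d\gamma_n$ with $F=\zeta_s e^{s/2}\chi_{\{\tilde u=0\}}\ge0$. On $\{\tilde u=0\}$ one has a.e.\ $\nabla u=0$ and $D^2u=0$, so where $\zeta_s\equiv1$ one gets $\cL\tilde w=-e^{s/2}(1-\tfrac{y_n^2}{2})\approx -e^{s/2}$, while Lemma~\ref{lem:closeness contactSet} only gives $\gamma_n(\{\tilde u(\cdot,s)=0\})\lesssim e^{-s/2}$, a bound that is actually attained at nondegenerate points of $\Sigma_{n-1}$ (the contact set at time $-r^2$ is a slab of thickness comparable to $r^2$). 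Hence $-\int F\,\cL\tilde w\,d\gamma_n$ is nonnegative and of size $e^{s/2}=1/r$: it carries the unfavorable sign for the required upper bound $\tilde\phi'(s)\le C\exp(-e^{s/2}/2)$ and is polynomially large, not exponentially small; the remaining piece $-\tilde\phi\int F\tilde w\,d\gamma_n$ is likewise nonnegative, and even its size is only $O(e^{-s/2})$. The sign coming from $\tilde u\ge0$ (Remark~\ref{oss:sign wDeltaw}) gives $\int F\tilde w\le0$, which does not help in this pairing.

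Consequently, once you discard the Cauchy--Schwarz deficit $-2\bigl(\tilde H\int(\cL\tilde w)^2\,d\gamma_n-\tilde D^2\bigr)$, the stated inequality cannot follow: your estimates would at best yield $\frac d{dr}\phi\ge -C/r$, not \eqref{eq:frequency almost monotonicity}. The actual mechanism in \cite[Proposition 5.4]{FRS} is a cancellation, not a size bound: the large contact-set contribution must be absorbed by the square/deficit terms, equivalently the right-hand side must be paired with quantities that vanish on $\{u=0\}$ up to cutoff terms (e.g.\ $Zw-2w=-p_2\,Z\zeta$ there, supported in $\{|x|\ge 1/4\}$) together with the signed terms; this is also why FRS work with a truncated frequency and an error divided by $H$. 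Deferring exactly this step to the ``careful bookkeeping'' of FRS leaves the main point unproved, and the mechanism you describe (thinness of the contact set plus the sign of $\tilde u$) is not the one that makes the lemma true. Either reproduce the FRS pairing and cancellation in full, or do as the paper does: cite \cite[Proposition 5.4]{FRS} and add only the uniform lower bound on $H$ from Lemma~\ref{lem:uniform cubic decay}.
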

\begin{proof}
    The proof is the same as \cite[Proposition 5.4]{FRS}, simply noting that in our context Lemma~\ref{lem:uniform cubic decay} yields $-Ce^{-2/r}/H(r,w)\ge - Ce^{-1/r}$, for $C>0$ depending only on $M,c$, and letting $\gamma\to+\infty$.
\end{proof}
We also point out the following consequence of Theorem~\ref{teo:frequency gap}.
\begin{cor}\label{cor:frequency gap}
        Let $u\colon B_1\times[-1,1]\to [0,+\infty)$ be a bounded solution of the Stefan problem \eqref{eq:SP} such that $(0,0)\in\Sigma_{n-1}$. Then $\lim_{r\to0^+}\phi(\zeta(u-p_2),r)=3$.
    \end{cor}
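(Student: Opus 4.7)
The plan is to identify $\lambda_0:=\lim_{r\to 0^+}\phi(\zeta(u-p_2),r)$ by bracketing it via the two-sided $H$-bounds of Lemma~\ref{lem:uniform cubic decay} and then appealing to Theorem~\ref{teo:frequency gap} to rule out intermediate values.

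First, Lemma~\ref{lem:non-degeneracy time derivative single solution} gives $u\in\cS(M,c,\rho)$ for suitable constants, so both \eqref{eq:frequency almost monotonicity} and Lemma~\ref{lem:uniform cubic decay} are available. The almost-monotonicity \eqref{eq:frequency almost monotonicity} produces the limit $\lambda_0\in\R\cup\{+\infty\}$. Using the standard parabolic frequency-height identity $\partial_r\log H(w,r)=2\phi(w,r)/r+O(e^{-1/r})$ together with the lower bound $H(w,r)\ge C^{-1}r^6$ of Lemma~\ref{lem:uniform cubic decay}, one integrates to obtain $\lambda_0\le 3$; in particular $\lambda_0$ is finite.

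Next, I would perform the gaussian blow-up $w_r(x,t):=w(rx,r^2t)/\sqrt{H(w,r)}$ with $w=\zeta(u-p_2)$. By construction $\|w_r(\cdot,-1)\|_{L^2(\gamma_n)}=1$, and combining the frequency identity with the upper bound $H(w,r)\le Cr^6$ bounds the gaussian $H^1$-norm of $w_r(\cdot,-1)$ uniformly in $r$. A gaussian compactness argument (in the spirit of Lemma~\ref{lem:cpt}) then yields a subsequential limit $q\not\equiv 0$ which is parabolically $\lambda_0$-homogeneous, the homogeneity coming from the convergence $\phi(w,r)\to\lambda_0$ applied to the frequency of $w_r$. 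Passing to the limit in \eqref{eq:SP} and using Lemma~\ref{lem:closeness contactSet} to squeeze $\{u=0\}$ into a shrinking neighborhood of $\{p_2=0\}=\{x_n=0\}$ (after rotation), $q$ is seen to solve the parabolic Signorini problem \eqref{eq:ptop} with finite gaussian energy. Theorem~\ref{teo:frequency gap} therefore excludes $\lambda_0\in(2,3)$.

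Finally, I would rule out $\lambda_0=2$ by noting that $v=u-p_2$ vanishes strictly faster than quadratically at $(0,0)$ since $p_2$ is by construction the $2$-homogeneous blow-up of $u$; this forces $H(w,r)=o(r^4)$ and hence $\lambda_0>2$. Combined with $\lambda_0\le 3$ and the frequency gap, this gives $\lambda_0=3$. The only real subtlety is the gaussian $H^1$-compactness for the blow-ups and the passage to the limit in the Signorini complementarity on $\{x_n=0\}$; this is handled by exactly the same gaussian tools used in Step~4 of the proof of Proposition~\ref{prop:epiperimetric stefan}.
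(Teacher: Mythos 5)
Your overall strategy (existence of the limit via \eqref{eq:frequency almost monotonicity}, the bound $\lambda_0\le 3$ from the non-degeneracy lower bound $H\gtrsim r^6$, and a normalized blow-up converging to a homogeneous parabolic Signorini solution that Theorem~\ref{teo:frequency gap} kills for $\lambda_0\in(2,3)$) is the same route the paper takes, except that the paper outsources the blow-up machinery entirely to \cite[Lemma 5.8(b), Proposition 6.7(b)]{FRS}. The problem is your last step. The implication ``$H(w,r)=o(r^4)\Rightarrow\lambda_0>2$'' is false: almost-monotonicity of $\phi$ only gives, for every $\delta>0$, the lower bound $H(r)\ge c_\delta r^{2\lambda_0+2\delta}$, and if $\lambda_0=2$ this reads $H(r)\ge c_\delta r^{4+2\delta}$, which is perfectly compatible with $H(r)=o(r^4)$ (think $H(r)\sim r^4/\log(1/r)$). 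Nor can you invoke Theorem~\ref{teo:frequency gap} at the endpoint: nontrivial parabolically $2$-homogeneous solutions of \eqref{eq:ptop} with finite Gaussian energy do exist (e.g.\ $-t-\tfrac12x_n^2$, or $x_1^2-x_n^2$), so the gap genuinely stops at the open interval $(2,3)$. Excluding $\lambda_0=2$ requires a separate, problem-specific argument (classification of $2$-homogeneous blow-ups together with the constraints $\partial_t q\ge0$ and a Monneau-type orthogonality to quadratic caloric polynomials coming from the definition of $p_2$), and this is precisely part of what the cited FRS results supply; your proposal does not contain it.

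Two smaller points in the blow-up step. First, the uniform $H^1(\gamma_n)$ bound on $w_r(\cdot,-1)=w(r\cdot,-r^2)/\sqrt{H(w,r)}$ should come from boundedness of the frequency itself, since $\int|\nabla w_r(\cdot,-1)|^2d\gamma_n=\phi(w,r)$; deriving it from the upper bound $H\le Cr^6$ of Lemma~\ref{lem:uniform cubic decay} is both unnecessary and dangerous, because that upper bound encodes decay of order at least $3$, i.e.\ essentially the conclusion you are proving (the only part of Lemma~\ref{lem:uniform cubic decay} that is safely ``upstream'' of the corollary is the lower bound coming from \eqref{eq:condition nondegeneracy partial_t}). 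Second, the full blow-up $q$ need not satisfy $\partial_nq\le0$ on $\{x_n=0\}$: from $\Heat q\le0$ one only controls the jump of the normal derivative, so before applying Theorem~\ref{teo:frequency gap} you must pass to the even part of $q$ (the odd part is caloric across $\{x_n=0\}$ and is excluded by its non-integer parabolic homogeneity). These are standard repairs, but as written the proposal has a genuine gap at $\lambda_0=2$ and a shaky justification of compactness.
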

    \begin{proof}
        It is an immediate consequence of Theorem~\ref{teo:frequency gap} together with \cite[Lemma 5.8 (b) and Proposition 6.7 (b)]{FRS}.
    \end{proof}
\begin{proof}[Proof of Theorem~\ref{teo:C3beta expansion}]
    We split the proof in several steps. In Step 1 we show that we can apply the epiperimetric inequality at all scale  $0<r<\bar r$ for some $\bar r$ depending only on $M,c,\rho$.
    In Step 2 we apply Proposition~\ref{prop:epiperimetric stefan}, working in conformal coordinates. In Step 4, using an $L^2-L^\infty$ estimate from Step 3, we conclude.

    Given $M,c,\rho>0$, let $\delta_0$ from Proposition~\ref{prop:epiperimetric stefan} applied with $M,c'$, where $c'$ depends only on $n,c$ and will be set in Step 1.

    Given a solution $u$ of \eqref{eq:SP} we set
    \[
        v= u-\tfrac12x_n^2.
    \]
    Note also that, in $(x,t)$ coordinates, the modified Weiss energy defined in \eqref{eq:modified weiss} is
    \[
        \widetilde W(\zeta v,r) = W_3(\zeta v,r) + r,
    \]
    where $W_3$ is defined in \eqref{eq:weiss} and $\zeta$ in \eqref{eq:spatial cutoff}.

    \medskip\noindent\textit{Step 1.} There is $\bar r$ (depending only on $M,c,\rho$) such that for all $u\in\cS(M,c,\rho)$, all $(x_0,t_0)\in \Sigma_{n-1}(u)\cap B_{1-\rho}\times[-1+\rho^2,1]$ and all $r<\bar r$ there are $p_r\in\cP_3^+$ satisfying $\partial_tp_r\ge c|x_n|$ and such that, up to a rotation in space,
    \[
        \|(\zeta v-p_r)(x_0+r\cdot,t_0+r^2\cdot)\|_{L^2((-1,-1/4);L^2(\gamma_n))}\le \delta_0 r^3\quad\text{and}\quad \widetilde W(\zeta v,r)<1\quad\forall r<\bar r.
    \]
    Assume by contradiction that the claim is false. Then there are solutions $u_k\in \cS(M,c,\rho)$, singular points $(x_k,t_k)$ such that
    \[
        (x_k,t_k)\in\Sigma_{n-1}(u_k)\cap B_{1-\rho}\times[-1+\rho^2,1]
    \]
    but there are $r_k\to0$ such that
    \[
        \|(\zeta v_k -p)(x_k+r_k\cdot,t_k+r_k^2\cdot)\|_{L^2((-1,-1/4);L^2(\gamma_n))} >\delta_0 r_k^3
    \]
    for all $p\in\cP_3^+$ satisfying $\partial_tp^\even\ge c'|x_n|$, or $\widetilde W(w_k,r_k)\ge1$.
    Up to a rescaling and translation, we will assume $u_k\colon B_1\times[-1,0]\to[0,M']$ are equibounded solutions of \eqref{eq:SP} and $x_k=t_k=0$. Thus, by local a priori estimates \eqref{eq:local C11 estimates} there is $u_\infty$ such that
    \[
        u_k\to u_\infty\quad\text{in }C^{1,1}_{x,\loc}(C_1)\cap C^{0,1}_{t,\loc}(C_1).
    \]
    We note that (see \cite{Blanchet06,LindgrenMonneau15,CaffarelliPetrosyanShahgolian}) there is modulus of continuity $\omega(r)$ depending only on $n,M$ such that
    \[
        v_k(r\cdot,r^2\cdot) = \omega(r)r^2\quad \forall r\in(0,1).
    \]
    Letting $k\to +\infty$ this implies $(0,0)\in\Sigma_{n-1}(u_\infty)$.
    As a consequence of Corollary~\ref{cor:frequency gap} it holds $\lim_{r\to0^+}\phi(\zeta v_k,r)=\lim_{r\to0^+}\phi(\zeta v_\infty,r)=3$.
    This together with \eqref{eq:frequency almost monotonicity} yields
    \[
        3\le \phi(\zeta v_\infty,r)+C\exp(-1/r)\le 3 + \sigma(r), 
    \]
    where $\sigma(r)\to0$ as $r\to0$.
    The same argument applied to the functions
    \[
        w_k\coloneqq r_k^{-3} (\zeta v_k)(r_k\cdot,r_k^2\cdot).
    \]
    together with $C^{1,1}$ convergence of $u_k$ to $u_\infty$ yields for all $R,\delta>0$
    \begin{equation}\label{eq:frequency pinching}
        3-\delta\le\phi(\zeta v_k,r)\le3+\delta\quad\forall r\in(0,R),\quad k\gg1.
    \end{equation}
    Similarly, since $W_3(\zeta(u_\infty-x_n^2/2),r)\to0$ as $r\to0$, $C^{1,1}$ convergence implies
    \[
        \widetilde W(\tilde w,r_k)\le \tfrac12
    \]
    provided $k$ is large enough.
    Moreover, \cite[Corollary 6.2, Lemma 6.5]{FRS} and Lemma~\ref{lem:uniform cubic decay} imply that for all $R>0$ there is $C(R)>0$ such that
    \begin{equation}\label{eq:a priori estimates}
        \|w_k\|_{L^\infty(C_R)}+\|\nabla w_k\|_{L^\infty(C_R)} + \|\partial_t w_k\|_{L^\infty(C_R)}\le C(R).
    \end{equation}
    In addition, since $\partial_tx_n^2/2\equiv0$, the nondegeneracy condition~\eqref{eq:condition nondegeneracy partial_t} implies
    \begin{equation}\label{eq:uniform nondegeneracy dt}
        \fint_{C_1} \partial_t w_k \ge c\quad \forall k\ge1.
    \end{equation}
    Finally, since the solutions $u_k$ are uniformly bounded, the cubic scaling defining $w_k$ yields uniform polynomial growth at $\infty$, namely there is $C>0$ depending only on $n,\zeta,M,\rho$ such that
    \begin{equation}\label{eq:cubic growth}
        |w_k|\le C R^3\quad \text{in }C_R\quad\forall R\ge1\forall k\ge1.
    \end{equation}

    \smallskip\noindent We now note that thanks to \eqref{eq:a priori estimates} there is $q\in H^1_{\loc}(\R^n\times(-\infty,0))$ such that
    \[
        w_k \to q\quad \text{locally weakly in } H^1_\loc (\R^n\times(-\infty,0)).
    \]
    We now claim that $q(\cdot,-1)\in\cP_3^+$ satisfies $\partial_tq\ge c'|x_n|$.
    Indeed, by Lemma~\ref{lem:closeness contactSet} the functions $w_k$ solve for all $R>0$ and $k$ large enough
    \[
        \Heat w_k = 0\quad \text{in }B_R\times(-R^2,0)\setminus\{|x_n|\le Cr_k\},
    \]
    thus $\Heat q=0$ on $\R^n\times(-\infty,0)\setminus\{x_n=0\}$.
    Moreover, since $\Heat w_k=-r_k^{-1}\chi_{\{u_k(r_k\cdot,r_k^2\cdot)=0\}}\le0$ are nonnegative measures and they weakly converge to $\Heat q$, we also have $\Heat q\le0$.
    By Lipschitz estimates we also have $w_k\to q$ locally uniformly in $\R^n\times(-\infty,0)$.
    Since $w_k = u_k(r_k\cdot,r_k^2\cdot)\ge0$ on $\{x_n=0\}$, this implies $q\ge0$ on $\{x_n=0\}$.
    Thus $q\Heat q\le0$. However, the nonnegative measures $w_k\Heat w_k = \tfrac12 x_n^2\chi_{\{u_k(r_k\cdot,r_k^2\cdot)=0\}}\ge0$ converge to $q\Heat q$, thus $q\Heat q\equiv0$. It follows that $q$ solves the parabolic thin obstacle problem.
    We also note that the estimate \eqref{eq:cubic growth} yields
    \[
        |q|\le CR^3\quad\text{in }B_R\times(-R^2,0).
    \]
    Moreover, \eqref{eq:frequency pinching} together with \cite[Lemma 5.6 (b)]{FRS} yields
    \[
        \int_{\R^n}q(\cdot,-r^2) G_n(\cdot,-r^2)dx \ge C_\delta r^{6+3\delta}.
    \]
    Since $\delta>0$ is arbitrary, these growth estimates imply that $q$ is 3-homogeneous, i.e. $q\in\cP_3^+$.\\
    Finally, \eqref{eq:uniform nondegeneracy dt} and the explicit form of $q$ (see \eqref{eq:cubic blow-ups}) yields that there is $c'$ depending only on $c$ and $n$ so that
    \[
        \partial_t q^\even\ge c'|x_n|,
    \]
    thus showing the claim.
    To conclude, we note that \eqref{eq:cubic growth} yields that for all $\eps>0$ there is $R_\eps>0$ independent from $k$ so that
    \[
        \int_{-1}^{-1/4}\int_{\R^n\setminus B_{R_\eps}} \tilde w_k^2d\gamma_n<\eps.
    \]
    This, together with local convergence in $H^1_\loc(\R^n\times(-\infty,0))$, is enough to reach a contradiction.

    \medskip\noindent\textit{Step 2.} There are $\bar r>0,\beta\in(0,\frac12), C>0$ depending only on $M,c,\rho$ such that the following holds:

    Let $u\in \cS(M,c,\rho)$ and let $(x_0,t_0)\in \Sigma_{n-1}(u)\cap C_{1-\rho}$. Then, up to a rotation in space,
    \begin{equation}\label{eq:C3beta in paraboloids}
        \int_{\R^n} (\zeta(u(x_0+\cdot,t_0-r^2)-x_n^2/2)-p_3(\cdot,-r^2))^2G_n(x,-r^2)dx \le Cr^{6+2\beta}\quad\forall r<\bar r
    \end{equation}
    for some $p_3\in\cP_3^+$.
    The proof is a standard consequence of the epiperimetric inequality together with a diadic argument in conformal coordinates.
    We recall that conformal coordinates $(y,s)$ are defined in \eqref{eq:conformal coordinates}. If we set $v=u-p_2$ and we define $\bar\sigma$ so that $e^{-\bar\sigma/2}=\bar r$ then, using the notation \eqref{eq:conformal transformation}, the function $\tilde v$ will solve \eqref{eq:conformal Stefan} in $B_{e^{s/2}}\times[0,+\infty)$. Moreover, by Step 1 the function $\tilde w = \widetilde{\zeta v}$ will satisfy
    \[
        \int_{\bar \sigma}^{\bar\sigma+1}\|\tilde w- p_s\|^2_{L^2(\gamma_n)}ds < \delta_0^2\quad\forall s>\bar \sigma
    \]
    for some $p_s\in \cP_3^+$ satisfying $\partial_t p_s^\even \ge c|x_n|$. Up to taking $\bar r$ smaller, we can assume that $\bar \sigma>s_0$. Thus, we can apply Proposition~\ref{prop:epiperimetric stefan} at all times $s\ge\bar\sigma$. Setting $s_k\coloneqq \bar\sigma+k$, applying Proposition~\ref{prop:epiperimetric stefan} yields
    \[
        \widetilde W(\tilde w,s_k) \le e^{-ck}\widetilde W(\tilde w,\bar\sigma)\quad\forall k\ge0,
    \]
    for some $c>0$ depending only on $\eps_0$. Note also that \eqref{eq:conformal monotonicity weiss} yields
    \[
        \|\partial_s\tilde w\|^2_{L^2(\gamma_n)}\le -\frac d{ds}\widetilde W(\tilde w,s) +\frac d{ds} e^{-s/2} + C\exp(-e^{s/2})\le -\frac d{ds}\widetilde W(\tilde w,s)
    \]
    provided $s>\bar\sigma$ is chosen possibly larger, depending only on $n,\zeta,M$.
    Thus, for all $h,k>0$ we compute
    \begin{multline*}
        \|\tilde w(\cdot,s_{k+h})-\tilde w(\cdot,s_k)\|_{L^2(\gamma_n)}\le \sum_{j=0}^{h-1} \|\tilde w(\cdot,s_{k+j+1})-\tilde w(\cdot,s_{k+j})\|_{L^2(\gamma_n)}\\
        \le \sum_{j=0}^{h-1} \left(\int_{s_{k+j}}^{s_{k+j+1}} \|\partial_s\tilde w\|^2_{L^2(\gamma_n)}ds\right)^{1/2}
        \le \sum_{j=0}^{h-1}\left(\widetilde W(\tilde w,s_{k+j})-\widetilde W(\tilde w,s_{k+j+1})\right)^{1/2}\\
        \le \sum_{j=0}^{h-1} e^{-c(k+j)/2}(\widetilde W(\tilde w,\bar\sigma))^{1/2}
        \le Ce^{-ck/2}.
    \end{multline*}
    It follows that the sequence $\tilde w(\cdot,s_k)$ is Cauchy. Since any accumulation point of $\tilde w(\cdot,s_k)$ is in $\cP_3^+$, there is $p_3\in \cP_3^+$ such that $\tilde w(\cdot,s_k)\to p_3$ and
    \[
        \|\tilde w(\cdot,\bar\sigma+k)-p_3\|_{L^2(\gamma_n)} \le Ce^{-ck/2}\quad \forall k\ge0.
    \]
    The same computation as before also yields $\|\tilde w(\cdot,s_k)-\tilde w(\cdot,s_k+s)\|_{L^2(\gamma_n)}\le e^{-ck/2}$ for all $s\in(0,1)$, thus
    \[
        \|\tilde w(\cdot,s)-p_3\|_{L^2(\gamma_n)}\le Ce^{-c(s-\bar\sigma)/2}\quad\forall s\ge\bar\sigma.
    \]
    Recalling \eqref{eq:conformal coordinates} and \eqref{eq:conformal transformation}, a change of variables yields \eqref{eq:C3beta in paraboloids}.

    \medskip\noindent\textit{Step 3.} There is $C>0$ depending only on $n,M$ such that for all $u$ solving \eqref{eq:SP} in $B_1\times[-1,0]$ with $|u|\le M$ and all $p\in\cP_3^+$ satisfying $\|p\|\le M$
    \begin{equation}\label{eq:local uniform estimates}
        |r^{-3}(u(r\cdot,r^2\cdot)-\tfrac{r^2}2x_n^2)-p|\le C (\|r^{-3}(u(r\cdot,r^2\cdot)-\tfrac{r^2}2x_n^2)-p\|_{L^2(C_1)} + r)\quad\text{in }C_{1/2}.
    \end{equation}
    The proof is similar to \eqref{eq:L2 Linfty}. Setting
    \[
        v_r\coloneqq r^{-3}(u(r\cdot,r^2\cdot)-\tfrac{r^2}2x_n^2),\quad u_r = u(r\cdot,r^2\cdot)
    \]
    then $v_r$ solve
    \[
        \Heat v_r = -\tfrac1r\chi_{\{u_r=0\}}\quad\text{in }C_1.
    \]
    Note that, since $\Heat p$ is supported on $\{x_n=0\}$, we have
    \[
        \Heat (p-v_r) =\tfrac1r\chi_{\{u_r=0\}}\ge0\quad\text{in }C_1\setminus\{x_n=0\}
    \]
    and, since $p\equiv0$ and $v_r=r^{-3}u_r\ge0$ on $\{x_n=0\}$, we also have $p-v_r\le 0$ on $\{x_n=0\}$. Thus,
    \[
        \Heat (p-v_r)_+\ge0\quad\text{in }C_1
    \]
    and Lemma \ref{lem:parabolic ABP} yields
    \begin{equation}\label{eq:proof local uniform 1}
        p-v_r\le C \|p-v_r\|_{L^2(C_1)}\quad\text{in }C_{1/2}.
    \end{equation}
    Similarly, since $-\Heat p\ge0$, we compute
    \[
        \Heat (v_r-p)\ge -\frac1r\chi_{\{u_r=0\}}\quad\text{in }C_1.
    \]
    We now note that, recalling \eqref{eq:polynomial lower bound}, on $\{v_r-p-Cr>0\}$ we have $u_r >\tfrac{r^2}2x_n^2 + r^3 p +Cr^4 \ge0$, thus
    \[
        \Heat (v_r-p-Cr)_+\ge0\quad\text{in }C_{1/2}
    \]
    and Lemma \ref{lem:parabolic ABP} yields
    \[
        v_r-p \le C(\|v_r-p\|_{L^2(C_1)} +r)\quad\text{in }C_{1/2}.
    \]
    Recalling~\eqref{eq:proof local uniform 1}, \eqref{eq:local uniform estimates} follows.
    
    \medskip\noindent\textit{Step 4.} We show that there is $C>0$ such that, up to a rotation in space,
    \[
        \|(u-\tfrac12x_n^2-p_3)(r\cdot,r^2\cdot)\|_{L^2(C_1)}\le Cr^{3+\beta}\quad\forall r<\bar r.
    \]
    This, together with \eqref{eq:local uniform estimates}, concludes the proof. Setting
    \[
        w=u-\tfrac12x_n^2-p,\quad w_r=w(r\cdot,r^2\cdot),
    \]
    we first claim that for all $A\ge1$ there is $C_A\ge1$ (depending only on $A,n,M,\beta$) such that the following implication holds:
    \begin{equation}\label{eq:proof claim}
        \|w_r\|_{L^2(C_1)} \ge C_A r^{3+\beta}\quad\Longrightarrow\quad \|w_{2r}\|_{L^2(C_1)}\ge A\|w_r\|_{L^2(C_1)}.
    \end{equation}
    To show \eqref{eq:proof claim} we will instead assume
    \[
        \|w_{2r}\|_{C^1}\le A\|w_r\|_{C^1}
    \]
    and prove
    \[
        \|w_r\|_{L^2(C_1)}\le C_Ar^{3+\beta}.
    \]
    By \eqref{eq:local uniform estimates} the assumption implies
    \[
        \|w_r\|_{L^\infty(C_1)}\le C'(\|w_r\|_{L^2(C_2)}+r^4)\le C''(\|w_{2r}\|_{L^2(C_1)}+r^4)\le C(A\|w_r\|_{L^2(C_1)}+r^4)
    \]
    for some constant $C$ depending only on $n,M$.
    Thus, for all $\tau>0$ small enough we find
    \[
        \int_{B_1\times(-1,-\tau)}w_r^2 \ge \int_{C_1}w_r^2 - \tau\|w_r\|^2_{L^\infty(C_1)}\ge (1-\tau C^2A^2)\|w_r\|_{L^2(C_1)}^2-\tau C^2r^8.
    \]
    Choosing $\tau = (2C^2A^2)^{-1}$, since $G\ge c_\tau$ on $B_1\times(-1,-\tau)$ and recalling \eqref{eq:C3beta in paraboloids}, we find
    \[
        \tfrac12\|w_r\|^2_{L^2(C_1)} \le \int_{B_1\times(-1,-\tau)}w_r^2 +Cr^8\le C\left(\int_{B_1\times(-1,-\tau)} w_r^2G +r^8\right)\le Cr^{6+2\beta}
    \]
    for some $C$ depending only on $n,M,A,\beta$, as we wanted.\\
    We now conclude the proof. Let $N>1$ be a large constant to be fixed, depending only on $n,M$, and let $A=N2^{3+\beta}$. Assume by contradiction that there is $r<\bar r/2$ such that
    \[
        \|w_r\|_{L^2(C_1)}\ge C_A r^{3+\beta},
    \]
    where $C_A$ is given by \eqref{eq:proof claim}.
    Then \eqref{eq:proof claim} yields
    \[
        \|w_{2r}\|_{L^2(C_1)}\ge A\|w_r\|_{L^2(C_1)}\ge AC_Ar^{3+\beta}\ge AC_A2^{-(3+\beta)}(2r)^{3+\beta}\ge NC_A(2r)^{3+\beta}.
    \]
    Thus, we can still apply \eqref{eq:proof claim} to find
    \[
        \|w_{4r}\|_{L^2(C_1)}\ge M\|w_{2r}\|_{L^2(C_1)}\ge ANC_A (2r)^{3+\beta} = N^2 C_A (4r)^{3+\beta},
    \]
    and iterating $\ell$ times yields
    \[
        \|w_{2^\ell r}\|_{L^2(C_1)}\ge N^\ell C_A (2^\ell r)^{3+\beta}.
    \]
    Choose $\ell\ge1$ so that $\bar r/2<2^\ell r\le\bar r$. Since
    \[
        \|w_{\rho}\|_{L^2(C_1)}\le C\rho^{3+\beta}\quad\forall \rho\in(\bar r/2,\bar r]
    \]
    for some constant $C>0$ depending only on $n,M,\bar r$, we find
    \[
        C (2^\ell r)^{3+\beta}\ge \|w_{2^\ell r}\|_{L^2(C_1)} \ge N(2^\ell r)^{3+\beta},
    \]
    thus reaching a contradiction if $N$ is large enough.
\end{proof}

\section{Proof of Theorem~\ref{teo:smooth covering}}\label{sec:main results}

To show Theorem~\ref{teo:smooth expansion}, we will use the following $C^\infty$ expansion, proven in \cite{FRS}.
We say that $u$ satisfies a $C^{3,\beta}$ expansion at $(0,0)\in\Sigma_{n-1}$ provided
\begin{equation}\label{eq:C3beta expansion}
    |u(rx,r^2t)-r^2x_n^2/2 + r^3p_3|\le C_0r^{3+\beta}\quad\forall r\in(0,1)
\end{equation}
for some $C_0>0$ and $p_3\in \cP_3^+$.
We construct a series of polynomial Ansätze for the Taylor expansion of $u$ at a singular point in the maximal stratum, based on~\cite[Definitions 13.3 and 13.4]{FRS}.
\begin{defn}[Two-sided Ansätze]\label{defn:two sided ansatz}
    Let $k\ge3$, and let $(Q_\ell^\pm)_{2\le\ell\le k-1}$ be two families of parabolically homogeneous polynomials of degree $\ell$ satisfying $\Heat(x_nQ_\ell) \equiv0$.
Then, given $\tau\in\R$ and a rotation $R \in\mathrm{SO}(n)$, we define
\[
    \mathscr P_k = \mathscr P_k [Q_2^\pm,\dots,Q_{k-1}^\pm,\tau,R] (x,t)
\]
by
\[
\mathscr P_k(x,t) := \frac12
\mathscr A_k[Q_2^+,\dots,Q_{k-1}^+]^2_+(R(x+ \tau\mathbf e_n),t)+\frac12\mathscr A_k[Q_2^-,\dots,Q_{k-1}^-]^2_-(R(x+ \tau\mathbf e_n),t),
\]
where $\mathscr A_k[Q_2,\dots,Q_{k-1}]$ is given in~\cite[Definition 13.3]{FRS}.
\end{defn}

\begin{teo}[{\cite[Theorems 13.1 and 13.5]{FRS}}]\label{teo:C3beta implies Cinfty}
    For all $C_0>0,\beta\in(0,1)$, $\alpha\in(0,1), k\ge3$ there is $\bar r>0$ depending only on $\alpha,k,\bar r, C_0,\beta$ such that the following holds:
    
    Let $u$ solve \eqref{eq:SP} in $B_1\times[-1,1]$ with $(0,0)\in \Sigma_{n-1}$ satisfying~\eqref{eq:C3beta expansion}.
    Then there is a two-sided polynomial Ansatz $\mathscr P_k=\mathscr P_k[Q_2^\pm\dots,Q_{k-1}^\pm,0,I]$ (see Definition \ref{defn:two sided ansatz}) such that
    \[
        \|u-\mathscr P_k\|_{L^2(B_r\times(-r^2,-r^{2+\beta/2})}\le r^{k+\alpha}\quad\forall r<\bar r.
    \]
\end{teo}

\begin{proof}[Proof of Theorem~\ref{teo:smooth expansion}]
    Given $M,c,\rho>0$, let $u\in\cS(M,c,\rho)$.
    By Theorem~\ref{teo:C3beta expansion} there are $\bar r, C_0$ and $\beta>0$ such that the following holds: 
    for all $(x_0,t_0)\in\Sigma_{n-1}(u)\cap B_{1-\rho}\times[-1+\rho^2,1]$ there is a rotation $R_{x_0,t_0}$ such that
    \[
        \bar u\coloneqq \bar r^{-2} u(x_0+\bar rR_{x_0,t_0}\cdot,t_0+\bar r^2\cdot)
    \]
    satisfies the $C^{3,\beta}$ expansion \eqref{eq:C3beta expansion} for some $p_3$. Thus, applying Theorem~\ref{teo:C3beta implies Cinfty} the result follows.
\end{proof}

To show Theorem~\ref{teo:smooth covering} we will need the following GMT results.
\begin{lem}[{\cite[Corollary~7.8]{FRS}}]\label{lem:gmt cleaning}
    Let $E\subset\R^n\times[-1,1]$, let $(x,t)$ denote a point in $\R^n\times[-1,1]$, and let $\pi_1 \colon (x,t) \to x$
and $\pi_2 \colon (x,t) \to t$ be the standard projections. Assume that for some $\beta\in(0,n]$ and $s>0$ with $\beta< s$ we have:
\begin{enumerate}[label=\roman*)]
    \item $\dim_\cH\pi_1(E)\le\beta$;
    \item for all $(x_0,t_0) \in E$ and $\eps>0$ there exists $\rho=\rho(x_0,t_0,\eps)>0$ such that
    \[
        \{(x,t) \in B_{\rho}(x_0)\times[-1,1] \colon t-t_0>|x-x_0|^{s-\eps}\} \cap E= \emptyset.
    \]
    \end{enumerate}
    Then $\dim_\cH(\pi_2(E))\le\beta/s$.
\end{lem}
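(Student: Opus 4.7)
The plan is to show $\cH^{\beta/s+\delta}(\pi_2(E))=0$ for an arbitrary $\delta>0$. Fix such a $\delta$ and pick $\eps>0$ so small that $(s-\eps)(\beta/s+\delta)\ge\beta+\eta$ with $\eta\coloneqq s\delta/2$; a direct expansion shows this holds once $\eps(\beta/s+\delta)<s\delta/2$. Decompose $E=\bigcup_{k\ge 1}E_k$ with
\[
    E_k\coloneqq\{(x_0,t_0)\in E\,:\,\rho(x_0,t_0,\eps)>1/k\},
\]
so that, by countable subadditivity of Hausdorff measure, it is enough to prove $\cH^{\beta/s+\delta}(\pi_2(E_k))=0$ for every fixed $k$.

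The pointwise cone condition (ii) upgrades, on $E_k$, to a uniform two-point estimate: given $(x_0,t_0),(x_0',t_0')\in E_k$ with $|x_0-x_0'|<1/k$ and $t_0\le t_0'$, the point $(x_0',t_0')$ must lie outside the forbidden cone at $(x_0,t_0)$, and thus $t_0'-t_0\le|x_0'-x_0|^{s-\eps}$. Consequently, for every ball $B_r(y)\subset\R^n$ with $r<1/(2k)$, the time projection of $E_k\cap (B_r(y)\times[-1,1])$ is contained in an interval of length at most $(2r)^{s-\eps}$.

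Combined with hypothesis (i), this yields a covering reduction. Given $\mu>0$, since $\dim_\cH\pi_1(E_k)\le\beta<\beta+\eta$, we can cover $\pi_1(E_k)$ by countably many balls $B_{r_i}(y_i)$ with $r_i<1/(2k)$ and $\sum_i r_i^{\beta+\eta}<\mu$. The previous step lifts this to a cover of $\pi_2(E_k)$ by intervals of length $(2r_i)^{s-\eps}$, and the exponent choice gives
\[
    \cH^{\beta/s+\delta}_\infty(\pi_2(E_k))\le\sum_i(2r_i)^{(s-\eps)(\beta/s+\delta)}\le C\sum_i r_i^{\beta+\eta}\le C\mu.
\]
Since the diameters $(2r_i)^{s-\eps}$ may be made arbitrarily small by requiring $r_i$ small (the $\beta+\eta$-cover is available at every scale), the same bound holds for $\cH^{\beta/s+\delta}_\theta$ for every $\theta>0$, and sending $\mu\to 0$ gives $\cH^{\beta/s+\delta}(\pi_2(E_k))=0$.

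The only delicate point is the balancing of $\eps,\eta,\delta$ so that the exponent arithmetic closes with room to spare; once the parameters are fixed, the remainder is the standard projection-Hausdorff covering argument, which is why no monotonicity or regularity input is needed beyond hypotheses (i) and (ii).
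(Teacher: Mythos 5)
Your proof is correct: the decomposition into the sets $E_k$ where the cone radius $\rho(\cdot,\cdot,\eps)$ is bounded below, the resulting two-point estimate $|t_0-t_0'|\le|x_0-x_0'|^{s-\eps}$, and the lifting of an efficient $(\beta+\eta)$-cover of $\pi_1(E_k)$ to short time intervals, with the exponent bookkeeping you indicate, closes without gaps. Note that the paper itself does not prove this lemma but quotes it from \cite[Corollary 7.8]{FRS}; your argument is essentially the standard covering proof used there, so nothing further is needed.
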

\begin{lem}[{\cite[Theorem 3.8]{Mattila}}]\label{lem:GMT rectifiability}
    Let $E\subset\R^{n+1}$. Assume that for all $x_0\in E$ there are $r,C>0$ and an $(n-2)$-dimensional plane $L\subset \R^n$ such that
    \[
        E\cap B_r(x_0)\subset \{|\pi_L^\perp (x-x_0)|_\pb\le C|\pi_L(x-x_0)|_\pb\},
    \]
    where $\pi_L$ denotes the orthogonal projection onto $L$ and $\pi_L^\perp$ denotes the orthogonal projection onto $L^\perp$.
    Then $E$ can be covered by the images of countably many parabolically Lipschitz functions $f_i\colon\R^{n-2}\to\R^{n+1}$.
\end{lem}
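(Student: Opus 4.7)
The plan is to reduce the pointwise cone assumption to a uniform one by a countable decomposition, observe that on each resulting piece the set is a parabolic-Lipschitz graph over its projection onto a fixed $(n-2)$-plane, and finally extend each such graph to a map defined on all of $\R^{n-2}$.

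Since the Grassmannian of $(n-2)$-dimensional subspaces of $\R^n$ is compact, for each $m\in\N$ I cover it by $1/m$-neighborhoods of finitely many reference planes. Combining these with a countable base of balls for $\R^{n+1}$ and quantizing the constant $C$ in the hypothesis, I partition $E$ into countably many subsets $F$, each contained in a ball of sufficiently small Euclidean radius, such that for some fixed plane $L_F\subset\R^n$, some constant $C_F$, and all $x_0,x\in F$,
\[
    |\pi_{L_F}^\perp(x-x_0)|_\pb\le C_F\,|\pi_{L_F}(x-x_0)|_\pb.
\]
Here the deviation between $L_F$ and the plane $L_{x_0}$ supplied by the hypothesis at each $x_0$ is absorbed into $C_F$ by taking the Grassmannian quantization fine enough, and the radius condition on the ball ensures that every pair of points of $F$ lies within the range of validity of the cone condition centered at either of them.

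Now fix such an $F$. Two points $x_1,x_2\in F$ with $\pi_{L_F}(x_1)=\pi_{L_F}(x_2)$ satisfy $|\pi_{L_F}^\perp(x_1-x_2)|_\pb=0$, hence $x_1=x_2$; therefore $\pi_{L_F}\colon F\to L_F\simeq\R^{n-2}$ is injective. Its inverse $f\colon\pi_{L_F}(F)\to\R^{n+1}$ is parabolic-Lipschitz with constant $1+C_F$: decomposing $f(y_1)-f(y_2)$ into its $L_F$ and $L_F^\perp$ components and applying the uniform cone estimate yields $|f(y_1)-f(y_2)|_\pb\le(1+C_F)|y_1-y_2|$, where the right-hand side is the Euclidean distance on $L_F\simeq\R^{n-2}$ (which coincides with its parabolic norm, since $L_F$ is purely spatial). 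Thus $F$ is contained in the image of one parabolic-Lipschitz map defined on a subset of $\R^{n-2}$.

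Finally, I extend each such $f$ from its domain $\pi_{L_F}(F)$ to all of $\R^{n-2}$ by the classical McShane/Whitney-type construction in the parabolic category, which preserves the Lipschitz constant up to a dimensional factor. The genuine content is the projection argument in the third paragraph; the decomposition in the second is a standard compactness-plus-quantization exercise, and the extension step is classical. The only subtle point I expect is making the quantization transfer step precise: one must ensure that the operator-norm perturbation between $L_F$ and $L_{x_0}$ is truly absorbed into a uniform constant $C_F$ on each piece, which is handled by choosing the quantization scale $1/m$ and the ball radii small enough relative to the constant provided by the hypothesis.
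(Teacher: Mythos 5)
Your Steps 1--2 are sound and follow the classical route behind the cited result (the paper itself gives no proof, it just invokes Mattila's cone-condition criterion): quantizing the Grassmannian, the constant and the radius to get countably many pieces on which the cone condition holds pairwise with a fixed plane, then observing that on each piece the projection $\pi_{L_F}$ is injective with parabolically Lipschitz inverse. The plane-perturbation bookkeeping you worry about does work, because on the cone one has $|x-x_0|\le(1+C)|\pi_{L_{x_0}}(x-x_0)|$, so an operator-norm error $\eps\le \tfrac{1}{2(1+C)}$ between $L_{x_0}$ and $L_F$ is absorbed; and since the perturbation only touches the spatial components, the time part of the parabolic norm is untouched.

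The genuine gap is Step 3. A map $f\colon\R^{n-2}\to\R^{n+1}$ that is Lipschitz into the \emph{parabolic} distance satisfies $|f_t(y_1)-f_t(y_2)|\le L^2|y_1-y_2|^2$ for its time component; chaining along a segment subdivided into $N$ pieces gives $|f_t(y_1)-f_t(y_2)|\le L^2|y_1-y_2|^2/N\to0$, so any parabolically Lipschitz map defined on \emph{all} of $\R^{n-2}$ has constant time component. Consequently there is no McShane/Whitney-type extension in this category: the quadratic modulus $|\Delta t|\le C|\Delta y|^2$ is not a metric-type Lipschitz condition, and whenever the time component of your graph map is nonconstant on $\pi_{L_F}(F)$ (which is the typical situation here), no extension to the whole plane preserving parabolic Lipschitzness exists, with any constant. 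The correct resolution is simply to stop after Step 2: the notion of parabolic countable $(n-2)$-rectifiability used in the paper (see the footnote to Theorem~\ref{teo:smooth covering}) only asks for parabolically Lipschitz maps defined on subsets $E\subset\R^{n-2}$, and that is exactly what your graph construction produces; if one insists on full domains, one can at most extend the spatial components. So the substance of your argument is right, but the claim that the extension step is ``classical'' is false in the parabolic setting and should be removed or replaced by the subset formulation.
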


\begin{proof}[Proof of Theorem~\cref{teo:smooth covering}]
    The fact that $\Sigma\setminus\Sigma_{n-1}$ is countably parabolically $(n-2)$-rectifiable follows from Lemma~\ref{lem:parabolic rectifiability lower strata}. Here we show the result with $\Sigma_{n-1}$.

    \medskip\noindent\textit{Step 1.} Let $(x_0,t_0)\in\Sigma_{n-1}$. Then there is $r_0>0$ such that $\Sigma_{n-1}\cap B_{r_0}(x_0) \times [-r_0^2,r_0^2]$ is covered by an $(n-1)$-dimensional $C^\infty$ manifold in $\R^{n+1}$.
    We sketch the proof, as the interested reader can find the details in the proof of \cite[Theorem 1.3]{FRS}.
    By Lemma~\ref{lem:non-degeneracy time derivative single solution} we can apply Theorem~\ref{teo:smooth expansion} to $u$ on compact subsets of $\Omega\times[0,T]$. Setting $K_r\coloneqq B_r\times(-r^2,-r^2/100)$, given $(x_1,t_1)\in\Sigma_{n-1}\cap B_{r_0}(x_0) \times [t_0-r_0^2,t_0+r_0^2]$ this yields
    \begin{equation}\label{eq:expansion}
        \|u(x_1+\cdot,t_1+\cdot)-\mathscr P_k\|_{L^\infty(K_r)}\le C_k r^{k+1/2}
    \end{equation}
    for some two-sided Ansatz $\mathscr P_k$ and some constants $C_k,r_0$ where $r_0,C_k$ are locally independent on the point ($C_k$ might depend on $k$).
    Arguing as in \cite{FRS}, up to a rotation in space this implies the existence of two smooth functions $\mathscr G^{(i)}\colon \R^{n-1}\times\R\to \R$ such that, writing $\R^n\ni x = (x',x_n)\in\R^{n-1}\times\R$,
    \begin{equation}\label{eq:tangential intersection}
        \Sigma_{n-1} \subset \{x_n=\mathscr G^{(1)}(x',t)\}\cap\{x_n=\mathscr G^{(2)}(x',t)\}\cap\{\nabla_{x'} \mathscr G^{(1)}(x',t)=\nabla_{x'}\mathscr G^{(2)}(x',t)\}
    \end{equation}
    in $B_{r_0}(x_0) \times [t_0-r_0^2,t_0+r_0^2]$.
    Using the relation between $\partial_t\mathscr P_3$ and $\partial_t\mathscr G^{(i)}$, condition \eqref{eq:condition nondegeneracy partial_t} yields $\partial_t \mathscr G^{(1)}(x_1,t_1)\neq \partial_t \mathscr G^{(2)}(x_1,t_1)$ at all singular points. This implies the claim.

    \medskip\noindent\textit{Step 2.} We claim that Theorem~\ref{teo:smooth covering} holds with $\Sigma^\infty$ given by
    \[
        \Sigma^\infty \coloneqq\bigcap_{k\ge3}\Sigma_{n-1}^{\ge k},
    \]
    where $\Sigma_{n-1}^{\ge k}$ is defined as follows. Set $\mathscr G^\even(x',t) = \mathscr G^{(1)}(x',t)-\mathscr G^{(2)}(x',t)$ and assume (up to exchanging the indices) that $\partial_t\mathscr G^\even(x',t)<0$.
    Given $k\ge 3$ we define
    \[
        \Sigma_{n-1}^{\ge k}\coloneqq\{(x_0,t_0)\in\Sigma_{n-1}\,:\,\mathscr |\mathscr G^\even(x_0'+x',t_0)|\le C|x'|^{k-1}\}\quad\text{and}\quad
        \Sigma_{n-1}^k=\Sigma_{n-1}^{\ge k}\setminus\Sigma_{n-1}^{\ge k+1}.
    \]

    \smallskip\noindent\textit{Step 2a.} We note that given $(x_0,t_0)\in \Sigma_{n-1}^{\ge k}$ there is $C>0$ such that
    \begin{equation}\label{eq:cleaning}
        t_1\le t_0 + C|x_0-x_1|^{k-1}\quad \forall (x_1,t_1)\in\Sigma_{n-1}.
    \end{equation}
    Indeed, as shown in \eqref{eq:tangential intersection}, if $(x_1,t_1)\in\Sigma_{n-1}$ then
    \[
        0=\mathscr G^\even(x_1,t_1).
    \]
    Since $\mathscr G^\even$ is a smooth function with $\partial_t \mathscr G^\even(x_0,t_0)<0$ and since $(x_0,t_0)\in \Sigma_{n-1}^{\ge k}$ this implies
    \[
        0 = \mathscr G^\even(x_1,t_1)\le C|x_1-x_0|^{k-1} - c(t_1-t_0),
    \]
    for some $C,c>0$, as we wanted.

    \smallskip\noindent\textit{Step 2b.} Let $\pi_t\colon\R^{n+1}\to\R$ denote the projection onto the time axis. Thanks to Step 1 and \eqref{eq:cleaning} we can apply Lemma~\ref{lem:gmt cleaning} with $\beta=n-1$ and $s =k-1$ for all $k\ge3$ to find $\dim_\cH(\pi_t(\Sigma^\infty))=0$.

    \smallskip\noindent\textit{Step 2c.} We conclude by showing that $\Sigma_{n-1}\setminus\Sigma^\infty$ is covered by countably many $(n-2)$-dimensional Lipschitz graphs (with respect to the parabolic structure). Since, as a consequence of $\eqref{eq:tangential intersection}$, $\Sigma_{n-1}=\Sigma^\infty\cup\bigcup_{k\ge3}\Sigma_{n-1}^k$, it is enough to show the result for the sets $\Sigma_{n-1}^k$ for $k\ge3$. Let $L=\{x_{n-1}=x_n=t=0\}$. We claim that for all $k\ge3$ and all $(x_0,t_0)\in\Sigma_{n-1}^k$ there are $C,r>0$ such that, up to a rotation in space,
    \begin{equation}\label{eq:sufficient for rectifiability}\begin{cases}
        |t_1-t_0|\le C|x_1-x_0|^{k-1},\\ |(x_1)_n-(x_0)_n|\le C|x_1-x_0|^2,\\ |(x_1)_{n-1}-(x_0)_{n-1}|\le C|\pi_L(x_1-x_0)|\end{cases}
        \quad\forall (x_1,x_0)\in B_r(x_0)\times (t_0-r,t_0+r).
    \end{equation}
    Indeed, the first inequality follows from~\eqref{eq:cleaning},
    while the second holds since $\mathscr G^{(i)}$ are smooth functions and we can assume, up to a rotation in space, that $\mathscr G^{(i)}(x_0,t_0)=0=\nabla_{x'}\mathscr G^{(i)}(x_0,t_0)$ for $i=1,2$ together with \eqref{eq:cleaning} with $k=3$.
    Moreover, by the definition of $\Sigma_{n-1}^k$ and \eqref{eq:tangential intersection} there is a nonzero $(k-2)$-homogeneous polynomial $g_{k-2}(x')$ such that
    \begin{equation}\label{eq:imposing tangential intersection}
        0=\nabla_{x'}\mathscr G^\even(x_1',t_1) = g_{k-2}(x'_1-x_0') +O(|x_1'-x_0'|^{k-1}+|t_1-t_0|).
    \end{equation}
    Up to a rotation of the first $(n-1)$ coordinates, we can assume that $\{g_{k-2}(x')=0\}\subset\{|x_{n-1}'|\le \frac C4|\pi_L x'|\}$, hence there is $c>0$ such that
    \[
        |g_{k-2}(x')| \ge c(|x'_{n-1}|-\tfrac C2|\pi_L (x')|)_+^{k-2}.
    \]
    Using this in \eqref{eq:imposing tangential intersection} together with $|t_1-t_0|\le C|x_1-x_0|^{k-1}$ and possibly choosing a smaller $r$, \eqref{eq:sufficient for rectifiability} follows. We conclude applying Lemma~\ref{lem:GMT rectifiability}.
\end{proof}

\begin{oss}
    Setting $Q(x_1,x_2) = (x_1^2-x_2^2)^2$ in $\R^3$, by Cauchy-Kovalevskaya Theorem (see for instance \cite[Theorem 4.6.3.2]{Evans}) there is a function $u^{(1)}$ solving~\eqref{eq:SP} in a parabolic neighbourhood of $(0,0)$ satisfying $\{u>0\}=\{x_3>-t+ Q(x_1,x_2)\}$. If $u^{(2)}(x,t) = u^{(1)}(x_1,x_2,-x_3,t)$, then $u=u^{(1)}+u^{(2)}$ will have $\Sigma = \{x_n=t=0, |x_1|=|x_2|\}$ and, using the notations of the proof of Theorem~\ref{teo:smooth covering}, $\Sigma^\infty=\emptyset$.
\end{oss}

\section{Proof of Theorem~\ref{teo:generic R4}}\label{sec:generic}

The proof of Theorem~\ref{teo:generic R4} is based on the following result, whose proof is postponed at the end of the section.
\begin{prop}\label{prop:quadratic cleaning}
    Let $u\colon B_1\times[-1,1]\to[0,+\infty)$ be a bounded solution of \eqref{eq:SP}. Then for all $(x_0,t_0)\in\Sigma\cap B_{1/2}\times[-1/2,1/2]$ there is $C_{x_0,t_0}$ such that
    \begin{equation}\label{eq:quadratic cleaning}
        \Sigma\cap\{|x-x_0|\le r,t\ge t_0+C_{x_0,t_0}r^2\}=\emptyset.
    \end{equation}
\end{prop}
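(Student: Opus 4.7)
The strategy is to split according to the stratum of $(x_0,t_0)$, the main case being $(x_0,t_0)\in\Sigma_{n-1}$. There I would recycle the construction from Step~1 of the proof of Theorem~\ref{teo:smooth covering}: after a rotation in space, two smooth functions $\mathscr G^{(1)},\mathscr G^{(2)}\colon\R^{n-1}\times\R\to\R$ describe the two branches of the free boundary near $(x_0,t_0)$, meeting tangentially at $(x_0',t_0)$ (i.e.\ $\mathscr G^{(1)}=\mathscr G^{(2)}$ and $\nabla_{x'}\mathscr G^{(1)}=\nabla_{x'}\mathscr G^{(2)}$ there), and by the nondegeneracy~\eqref{eq:condition nondegeneracy partial_t} (which holds here via Lemma~\ref{lem:non-degeneracy time derivative single solution}) their time derivatives differ. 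Setting $\mathscr G^{\mathrm{even}}:=\mathscr G^{(1)}-\mathscr G^{(2)}$ and relabelling so that $\partial_t\mathscr G^{\mathrm{even}}(x_0',t_0)=-c<0$, any $(x_1,t_1)\in\Sigma_{n-1}$ close to $(x_0,t_0)$ satisfies $\mathscr G^{\mathrm{even}}(x_1',t_1)=0$ by~\eqref{eq:tangential intersection}; Taylor expanding around $(x_0',t_0)$ gives
\[
0=-c(t_1-t_0)+O\bigl(|x_1-x_0|^2+(t_1-t_0)^2\bigr),
\]
hence $t_1-t_0\le C|x_1-x_0|^2$ once $t_1-t_0$ is small. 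For $(x_1,t_1)\in\Sigma\setminus\Sigma_{n-1}$ close to $(x_0,t_0)$ the identity $\mathscr G^{\mathrm{even}}(x_1',t_1)=0$ may fail, and I would instead exploit the time monotonicity $\partial_tu\ge 0$ together with $u(x_1,t_1)=0$ to conclude $u(x_1,t)=0$ for every $t\le t_1$; applying the $L^\infty$-expansion estimate~\eqref{eq:expansion} on a backward parabolic cylinder centered at $(x_0,t_0)$ at scale $r\sim|x_1-x_0|$ yields $\mathscr G^{\mathrm{even}}(x_1',t_0-|x_1-x_0|^2)\ge -C|x_1-x_0|^{k/2+1/4}$, and smoothness of $\mathscr G^{\mathrm{even}}$ in $t$ propagates this bound to $t=t_1$ so the same Taylor argument concludes.

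For $(x_0,t_0)\in\Sigma\setminus\Sigma_{n-1}$ I would argue by compactness: if cleaning fails, there is a sequence $(x_k,t_k)\in\Sigma$ with $r_k:=|x_k-x_0|\to 0$ and $(t_k-t_0)/r_k^2\to\infty$. Rescaling at scale $\rho_k:=\sqrt{t_k-t_0}$, the Stefan-problem $C^{1,1}$ estimates give $u_k(y,s):=\rho_k^{-2}u(x_0+\rho_k y,t_0+\rho_k^2 s)\to p_{2,x_0,t_0}(y)$ locally uniformly, while the rescaled singular points $(x_k-x_0)/\rho_k$ concentrate at $0$. Combining the lower-strata analogue of Lemma~\ref{lem:closeness contactSet} (confining $\Sigma$ near $(x_0,t_0)$ to an $O(r^2)$-tubular neighborhood of $x_0+\ker A_{x_0,t_0}$) with the upper semicontinuity of the stratification, one reduces the contradiction to a cluster of $\Sigma_{n-1}$-points converging to a maximal-stratum point in the limit, excluded by the first part.

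The principal obstacle is that Theorem~\ref{teo:smooth expansion} provides the refined expansion only at \emph{backward} times from $(x_0,t_0)$, whereas the proposition is a \emph{forward}-time statement. The key trick is to use the monotonicity $\partial_tu\ge 0$ to turn a contact-set condition at $t_1>t_0$ into one at a backward time of size $\sim|x_1-x_0|^2$, and then exploit smoothness of the graph functions $\mathscr G^{(i)}$ (or of the quadratic blow-up) to propagate the estimate back to $t=t_1$ without spoiling the quadratic scale.
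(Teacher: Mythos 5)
Your treatment of the case $(x_0,t_0)\in\Sigma\setminus\Sigma_{n-1}$ — which is where the real content of the proposition lies — has a genuine gap: the soft compactness argument cannot produce a contradiction. After rescaling by $\rho_k=\sqrt{t_k-t_0}$, the limit of $u_k$ is the time-independent polynomial $p_{2,x_0,t_0}$, which vanishes together with its gradient on $\{p_{2,x_0,t_0}=0\}\times\R$ for \emph{all} times; a rescaled singular point sitting at $(o(1),1)$ is therefore perfectly compatible with the limit (it satisfies $u_k=|\nabla u_k|=0$ there, and the nondegeneracy of $u_k$ at that point is matched by the quadratic growth of $p_2$), so nothing is contradicted — the blow-up itself ``never cleans'', which is exactly why the statement is quantitative. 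Moreover, the proposed reduction to ``a cluster of $\Sigma_{n-1}$-points converging to a maximal-stratum point'' is not available: by uniqueness and continuity of the quadratic blow-up along $\Sigma$ (the Monneau-type monotonicity underlying the coverings of \cite{Blanchet06,LindgrenMonneau15}), blow-ups at singular points near $(x_0,t_0)$ converge to $p_{2,x_0,t_0}$, whose Hessian has rank at least two, while rank-one trace-one nonnegative matrices form a closed set; hence a neighbourhood of a lower-stratum point contains no $\Sigma_{n-1}$-points at all, and even if it did, controlling them relative to the nearby maximal point would not yield the quadratic rate centred at $(x_0,t_0)$. The paper instead proves a second-order nondegeneracy of $\partial_t u$: Lemma~\ref{lem:cleaning nondegeneracy dt} blows up the \emph{error} $u(r\cdot,r^2\cdot)-r^2p_2$ normalized by $H(r,\zeta(u-p_2))^{1/2}$ and uses the estimates $(D^2u)_-\le C\partial_tu$ and $\partial_{tt}u\ge-C$ of Lemma~\ref{lem:estimates} to force the limiting caloric quadratic to have a strictly positive time-slope, giving $\partial_tu\gtrsim r^{-2}H(r,\zeta(u-p_2))^{1/2}$ on $C_r\cap\{|x_n|>r/10\}$; the barrier construction of Lemma~\ref{lem:cleaning dt barrier} (Harnack plus an explicit caloric barrier) then upgrades this to $\{u=0\}\cap B_{r/2}\times[Cr^2,1)=\emptyset$, which contains \eqref{eq:quadratic cleaning}. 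Nothing in your sketch plays the role of this quantitative lower bound at the scale of the error $H(r)^{1/2}$, and without it the quadratic rate cannot be reached.

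For $(x_0,t_0)\in\Sigma_{n-1}$ your main line — Taylor expansion of $\mathscr G^{(1)}-\mathscr G^{(2)}$ using the strict time-slope, i.e.\ \eqref{eq:cleaning} with $k=3$ — is exactly the paper's argument and is fine for competitors $(x_1,t_1)\in\Sigma_{n-1}$. Your patch for nearby singular points outside $\Sigma_{n-1}$, however, does not close: monotonicity plus \eqref{eq:expansion} only gives information at backward times $s\le t_0$ (the expansion is stated on backward cylinders), and since $\partial_t(\mathscr G^{(1)}-\mathscr G^{(2)})<0$, a lower bound on $\mathscr G^{(1)}-\mathscr G^{(2)}$ at $t_0-r^2$ ``propagated by smoothness'' yields nothing at the forward time $t_1>t_0$; you never play the vanishing of $u$ \emph{at} $(x_1,t_1)$ against a quantity that is positive there. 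The clean way out, and the one the paper takes, is the barrier-type cleaning at maximal-stratum points (\cite[Proposition 8.3]{FRS}, the analogue of Lemma~\ref{lem:cleaning dt barrier}), which removes the entire contact set — hence all of $\Sigma$, regardless of stratum — from $B_{r/2}\times[t_0+Cr^2,1)$.
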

To show Theorem~\ref{teo:generic R4} we will need the following results.
\begin{prop}[{\cite[Theorem 1.9]{LindgrenMonneau15}}]\label{prop:covering lower strata}
    Let $u$ be a solution of \eqref{eq:SP}, then $\cup_{h\le n-2}\Sigma_h$ can be covered by one $(n-2)$-dimensional manifold $C^1$ in space and $C^{1/2}$ in time.
\end{prop}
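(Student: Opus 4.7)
The plan is to follow the standard Whitney-extension strategy for singular sets in obstacle problems, adapted to the parabolic setting.

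First, establish uniqueness of the $2$-homogeneous blow-up $p_{2,x_0,t_0}$ at each singular point $(x_0,t_0)\in\Sigma$ via a Monneau-type Gaussian monotonicity formula: for every admissible $2$-homogeneous profile $p$ (of the form $\tfrac12 x^tAx$ with $A\ge0$ and $\tr A=1$) the map
\[
    r\longmapsto \frac1{r^{4}}\int_{\{t=-r^{2}\}}\bigl(u(x_0+\cdot,t_0+\cdot)-p\bigr)^{2}\, G_n(x,t)\, dx
\]
is nondecreasing in $r$, up to exponentially small errors introduced by the spatial cutoff $\zeta$. Combined with the almost-monotonicity of the quadratic Weiss-type energy (obtained as in~\eqref{eq:almost mononicity weiss} with the $2$-homogeneous normalization), this forces uniqueness of the blow-up.

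Second, upgrade the pointwise expansion~\eqref{eq:first taylor} to a uniform expansion on compact subsets $K\subset\Sigma$ with a single modulus $\omega_K$, via a compactness-and-contradiction argument based on the Monneau formula. This simultaneously yields continuity of $(x_0,t_0)\mapsto p_{2,x_0,t_0}$ in the parabolic metric with a modulus that, once integrated in space and in time, provides $C^1$ spatial and $C^{1/2}$ temporal Hölder moduli for the coefficients of the $2$-jet field. Now apply a parabolic Whitney extension theorem to the compatible family $\{p_{2,x_0,t_0}\}_{(x_0,t_0)\in\Sigma}$: this produces $v\in C^1_xC^{1/2}_t(\R^{n+1})$ whose $2$-jet at every $(x_0,t_0)\in\Sigma$ equals $p_{2,x_0,t_0}$. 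In particular $\nabla_x v\equiv0$ on $\Sigma$ and $D_x^{2}v(x_0,t_0)=A_{x_0,t_0}$ there.

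Third, stratify by the rank. At a point of $\Sigma_h$ the Hessian $A_{x_0,t_0}$ has rank $n-h$; selecting $n-h$ linearly independent rows of $D_x^{2}v$ and invoking the implicit function theorem on the corresponding components of the vector equation $\nabla_x v=0$ shows that $\Sigma_h$ is locally contained in an $h$-dimensional submanifold of $\R^{n+1}$, inheriting the $C^1_xC^{1/2}_t$ regularity of $v$. Taking a countable cover of $\Sigma_h$ by such local pieces, embedding each $h$-dimensional piece with $h\le n-2$ into an $(n-2)$-dimensional ambient slice, and assembling the disjoint union yields a single $(n-2)$-dimensional manifold of the required regularity containing $\bigcup_{h\le n-2}\Sigma_h$.

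The main obstacle is the uniform modulus in the second step: unlike the elliptic case the parabolic Monneau formula is not immediately quantitative, and the compactness argument must prevent degeneration of the blow-up profile along singular sequences $(x_0^{(k)},t_0^{(k)})\to(x_0^\infty,t_0^\infty)$. This is handled by exploiting the normalization $\tr A_{x_0,t_0}=1$, which confines the admissible profiles to a compact set of $2$-homogeneous polynomials and allows the limit profile to be identified with the blow-up at the limit point, thereby closing the compactness loop.
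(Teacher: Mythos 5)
The paper does not prove this proposition at all: it is imported from \cite{LindgrenMonneau15}, so there is no internal argument to compare against. Your sketch reconstructs a proof along the classical monotonicity-formula route (parabolic Monneau-type monotonicity for uniqueness of the quadratic blow-up, compactness for a uniform modulus, parabolic Whitney extension, implicit function theorem), which is essentially the parabolic version of the elliptic Caffarelli--Monneau argument and closer in spirit to \cite{Blanchet06} than to the cited proof; it is a legitimate route here, and it is indeed simplified by the Stefan structure, since $\partial_t u\ge0$ forces the blow-ups to be the purely spatial polynomials $\tfrac12x^t\cdot Ax$ with $A\ge0$, $\tr A=1$, so the admissible profiles form a compact family, as you use to close the compactness step.

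Two points need tightening before this is a proof. First, a function that is only $C^1$ in space has no spatial Hessian in a neighbourhood, so with your $v\in C^1_xC^{1/2}_t$ you may prescribe the pointwise $2$-jet on $\Sigma$ but you cannot invoke the implicit function theorem for the system $\nabla_xv=0$: that requires the Whitney extension to be taken at second order in space, i.e. $v$ of class $C^2$ in space with $D^2_xv$ continuous in space-time (and $\nabla_xv$ uniformly $C^{1/2}$ in $t$), which is exactly what the second-order compatibility conditions coming from the uniform version of \eqref{eq:first taylor} and the continuity of $(x_0,t_0)\mapsto p_{2,x_0,t_0}$ provide. Second, the implicit function theorem can only be applied in the spatial variables, with $t$ as a parameter (no time differentiability is available, nor needed); solving for $n-h\ge2$ spatial coordinates therefore produces a graph over the remaining spatial coordinates \emph{and} time, i.e. an ``$h$-dimensional manifold, $C^1$ in space and $C^{1/2}$ in time'' in the sense in which the proposition is meant and used in Lemma~\ref{lem:parabolic rectifiability lower strata} --- not literally an $h$-dimensional submanifold of $\R^{n+1}$ as you state; your argument does not (and need not) eliminate the time direction. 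Relatedly, to obtain one $(n-2)$-dimensional covering it is cleaner to observe that on $\cup_{h\le n-2}\Sigma_h$ the Hessian $A_{x_0,t_0}$ has rank at least $2$ everywhere, so two locally chosen components of $\nabla_xv$ already cut out the desired graph of two spatial coordinates over $\R^{n-2}\times\R$; assembling per-stratum pieces of different dimensions as a ``disjoint union'' does not quite work as written, since those pieces need not be disjoint.
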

\begin{lem}\label{lem:parabolic rectifiability lower strata}
    The set $\Sigma\setminus\Sigma_{n-1}$ is countably parabolically $(n-2)$-rectifiable.
\end{lem}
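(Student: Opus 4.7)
The plan is to apply Lemma~\ref{lem:GMT rectifiability} after decomposing $\Sigma\setminus\Sigma_{n-1}$ into countably many pieces. At each point I would produce an $(n-2)$-dimensional spatial plane $L$, together with a local radius and cone constant, for which the parabolic cone condition holds. Proposition~\ref{prop:covering lower strata} will control the spatial direction and Proposition~\ref{prop:quadratic cleaning} will control the time direction.

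By Proposition~\ref{prop:covering lower strata} the set $\bigcup_{h\le n-2}\Sigma_h$ is contained in an $(n-2)$-dimensional manifold $\mathscr M$ of class $C^1$ in space and $C^{1/2}$ in time. At each $(x_0,t_0)\in\Sigma\setminus\Sigma_{n-1}$ I take $L=L_{x_0,t_0}\subset\R^n$ to be the spatial tangent to the time-$t_0$ slice of $\mathscr M$ at $x_0$, and parametrize $\mathscr M$ locally as a graph over $L\times\R$. The $C^1$-in-space and $C^{1/2}$-in-time regularity then yields, for $(x,t)\in\mathscr M$ near $(x_0,t_0)$,
\[
    \operatorname{dist}\bigl((x-x_0)_{sp},L\bigr)\le\eta(r)\,|\pi_L(x-x_0)|+C\sqrt{|t-t_0|},\qquad\eta(r)\xrightarrow{r\to 0}0,
\]
where $(x-x_0)_{sp}$ denotes the spatial component of $x-x_0$.

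Next, Proposition~\ref{prop:quadratic cleaning} applied at both endpoints of a pair of nearby singular points gives $|t_1-t_0|\le\max(C_{x_0,t_0},C_{x_1,t_1})|x_1-x_0|^2$. To make this uniform I decompose
\[
    \Sigma\setminus\Sigma_{n-1}=\bigcup_{k\ge 1}E_k,\qquad E_k\coloneqq\{(x,t)\in\Sigma\setminus\Sigma_{n-1}\,:\,C_{x,t}\le k\},
\]
so that within each $E_k$ one has $|t_1-t_0|\le k|x_1-x_0|^2$. Substituting $\sqrt{|t-t_0|}\le\sqrt k\,|x-x_0|$ into the graph estimate and choosing the local radius $r=r(k)$ small enough that $\eta(r)$ is sufficiently small, one closes a spatial cone bound $\operatorname{dist}((x-x_0)_{sp},L)\le C'_k|\pi_L(x-x_0)|$; combined with the quadratic time bound this produces the full parabolic cone estimate $|\pi_L^\perp(x-x_0)|_\pb\le C''_k|\pi_L(x-x_0)|_\pb$ required in Lemma~\ref{lem:GMT rectifiability}. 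Applying that lemma to each $E_k$ and taking the countable union in $k$ will yield the parabolic countable $(n-2)$-rectifiability of $\Sigma\setminus\Sigma_{n-1}$.

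The main technical point will be this algebraic step: the $C^{1/2}$-in-time term $C\sqrt{|t-t_0|}$ in the graph estimate is of the same parabolic order as the spatial variables, so it cannot be absorbed into a purely spatial cone condition directly, and only the uniform quadratic cleaning available on $E_k$ allows its reabsorption. The partition by the value of $C_{x,t}$ is essential since the cleaning constant of Proposition~\ref{prop:quadratic cleaning} is only a pointwise quantity on $\Sigma\setminus\Sigma_{n-1}$; the final cone constants $C'_k,C''_k$ may blow up with $k$, but this is harmless since only countable rectifiability is claimed.
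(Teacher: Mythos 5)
Your overall strategy --- decompose $\Sigma\setminus\Sigma_{n-1}$ according to the size of the cleaning constant, get a uniform quadratic time bound on each piece $E_k$ from Proposition~\ref{prop:quadratic cleaning}, combine it with the spatial information from Proposition~\ref{prop:covering lower strata}, and conclude with Lemma~\ref{lem:GMT rectifiability} --- is exactly the paper's. However, the step you yourself single out as the main technical point does not close as written. For two points of $E_k$ set $a=|\pi_L(x_1-x_0)|$, $b=\dist(x_1-x_0,L)$ (spatial distance to the plane) and $c=\sqrt{|t_1-t_0|}$. Your graph estimate reads $b\le\eta(r)\,a+C\,c$ with a \emph{fixed} constant $C$ (the $C^{1/2}$-in-time seminorm), while the cleaning bound gives $c\le\sqrt k\,|x_1-x_0|\le\sqrt k\,(a+b)$. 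Substituting yields $b\le(\eta(r)+C\sqrt k)\,a+C\sqrt k\,b$, and the term $C\sqrt k\,b$ cannot be absorbed into the left-hand side unless $C\sqrt k<1$, which fails for large $k$; shrinking $r$ only makes $\eta(r)$ small, not $C\sqrt k$. Indeed, with only these two inputs the configuration $a=0$, $b\le Cc$, $c\le\sqrt k\,b$ is perfectly consistent, so no spatial cone bound, and hence no parabolic cone condition, follows.

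What rescues the argument (and what the appeal to the covering result is really using) is that at a singular point the closeness to $L$ holds with a \emph{modulus in front of the whole parabolic distance}, not a fixed constant in front of $\sqrt{|t_1-t_0|}$: from the expansion $u(x_0+x,t_0+t)=p_{2,x_0,t_0}(x)+o(|x|^2+|t|)$ and $u(x_1,t_1)=0$ one gets $p_{2,x_0,t_0}(x_1-x_0)=o(\rho^2)$, where $\rho$ is the parabolic distance between the two points; since $p_{2,x_0,t_0}(y)\ge\lambda\,\dist(y,L)^2$ for some $\lambda>0$ once $L$ is chosen as an $(n-2)$-dimensional plane containing $\{p_{2,x_0,t_0}=0\}$, this gives $b\le\omega(\rho)\,\rho\le\omega(\rho)(a+b+c)$ with $\omega(\rho)\to0$ as $\rho\to0$ (a pointwise statement, which is all Lemma~\ref{lem:GMT rectifiability} requires). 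Now inserting $c\le\sqrt k\,(a+b)$ and choosing $r$ so small (depending on $k$ and on the point) that $\omega(\rho)(1+\sqrt k)\le\tfrac12$, one obtains $b\le a$ and then $c\le2\sqrt k\,a$, i.e.\ the parabolic cone condition with a constant depending on $k$, after which your application of Lemma~\ref{lem:GMT rectifiability} on each $E_k$ goes through and coincides with the paper's proof.
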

\begin{proof}
    Given a singular point $(x_0,t_0)$ we denote by $C_{x_0,t_0}$ the constant given by Proposition~\ref{prop:quadratic cleaning}.
    Given $N\ge1$ we define
    \[
        \Sigma^N\coloneqq \{(x_0,t_0)\in\Sigma_{\le n-2}\,:\, C_{x_0,t_0}\le N\}.
    \]
    It then follows that if $(x_0,t_0),(x_1,t_1)\in\Sigma^N$ then
    \[
        |t_1-t_0|\le N|x_1-x_0|^2.
    \]
    Moreover, up to a rotation in space we can assume that $\{p_{2,x_0,t_0}=0\}\subset\{x_n=x_{n-1}=0\}=L$.
    Thus, this and Proposition~\ref{prop:covering lower strata} yield
    \[
        |(x_0-x_1)_n|^2+|(x_0-x_1)_{n-1}|^2+|t_0-t_1|\le C|\pi_L(x_0-x_1)|^2,
    \]
    where $\pi_L\colon\R^n\to L$ denotes the orthogonal projection onto $L$.
    Since by Proposition~\ref{prop:quadratic cleaning} $\Sigma_{\le n-2}=\cup_{N\ge1} \Sigma^N$ we conclude using Lemma~\ref{lem:GMT rectifiability}.
\end{proof}
The following fact is essentially \cite[Lemma A.3]{FerryLemma}.

\begin{lem}\label{lem:FerryLemma}
    Let $E\subset \R^n$ and $m\in\N$. Assume that $E$ is countably $m$-rectifiable (with respect to the euclidean structure) and that there is $f\colon E\to \R$ satisfying for some $C>0$ and some $p>1$
    \[
        |f(x)-f(y)|\le C|x-y|^p\quad\forall x,y\in E.
    \]
    Then
    \[
        \cH^\frac mp (f(E))=0.
    \]
\end{lem}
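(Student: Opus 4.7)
The plan is to reduce the claim to two elementary observations about $p$-H\"older functions. By the definition of countable $m$-rectifiability, combined with Kirszbraun's extension theorem applied to each Lipschitz parametrization, I can write $E \subset N_0 \cup \bigcup_{i\in\N} g_i(\R^m)$ with $\cH^m(N_0) = 0$ and each $g_i\colon \R^m \to \R^n$ Lipschitz. By countable subadditivity of $\cH^{m/p}$ it then suffices to prove $\cH^{m/p}(f(N_0)) = 0$ and $\cH^{m/p}(f(g_i(\R^m))) = 0$ for every $i$.

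For the null part $N_0$, I would establish the general estimate $\cH^{m/p}(f(S)) \le C^{m/p}\cH^m(S)$ valid for every $S\subset E$. This is a direct covering argument: any cover of $S$ by sets of diameter at most $\delta$ is sent by $f$ to a cover of $f(S)$ by sets of diameter at most $C\delta^p$, and since $(C\delta^p)^{m/p} = C^{m/p}\delta^m$, passing to the infimum over such covers and then letting $\delta\to 0$ yields the bound. Applying it with $S = N_0$ gives $\cH^{m/p}(f(N_0)) = 0$.

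For each piece $g_i(\R^m)$, the composition $h \coloneqq f\circ g_i\colon \R^m \to \R$ satisfies $|h(x)-h(y)| \le C(\mathrm{Lip}\, g_i)^p\, |x-y|^p$, i.e.\ it is $p$-H\"older on the whole of $\R^m$. The crucial observation is that any $p$-H\"older function on $\R^m$ with $p>1$ is constant: given $x,y\in\R^m$, I would partition the segment from $x$ to $y$ into $N$ equal pieces $x_0=x, x_1,\dots,x_N=y$ with $|x_{k+1}-x_k|=|x-y|/N$ and telescope:
\[
    |h(x)-h(y)|\le \sum_{k=0}^{N-1}|h(x_{k+1})-h(x_k)|\le N\cdot C\bigl(|x-y|/N\bigr)^p = C|x-y|^p N^{1-p},
\]
which tends to $0$ as $N\to\infty$ since $p>1$. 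Therefore $f(g_i(\R^m))$ is a single point, contributing nothing to $\cH^{m/p}$, and the conclusion follows by summing over $i$.

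There is no substantive obstacle in this scheme: the argument is a combination of a standard GMT covering estimate with the telescoping computation above. The hypothesis $p>1$ is used precisely once, in the exponent $N^{1-p}$, to force $h$ to be constant on the ambient $\R^m$; this is what turns the mere finiteness of $\cH^{m/p}(f(S))$ into the vanishing claim.
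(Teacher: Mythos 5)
There is a genuine gap in your treatment of the rectifiable pieces. The function $f$ is defined only on $E$, so the composition $h=f\circ g_i$ is defined only on $g_i^{-1}(E)$, which in general is a disconnected (typically porous or totally disconnected) subset of $\R^m$ — not on the whole of $\R^m$. Your telescoping argument needs to evaluate $h$ at the intermediate points $x_k$ of the segment, which need not belong to $g_i^{-1}(E)$, so you cannot conclude that $h$ is constant; and the conclusion is in fact false. For instance, on $E=\{4^{-k}\colon k\in\N\}\cup\{0\}\subset\R$ the non-constant function $f(4^{-k})=16^{-k}$, $f(0)=0$ satisfies $|f(x)-f(y)|\le 2|x-y|^2$, so a $p$-Hölder map with $p>1$ on a subset of an $m$-dimensional manifold need not be constant. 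You also cannot repair this by extending $f$ to a $p$-Hölder map on $\R^n$ with the same exponent $p>1$: such an extension would be constant by your own telescoping argument, contradicting the example. Finally, your covering estimate $\cH^{m/p}(f(S))\le C^{m/p}\cH^m(S)$ does not close the gap either, because the pieces $E\cap g_i(\R^m)$ may have positive (locally finite) $\cH^m$ measure, so that estimate only gives finiteness of $\cH^{m/p}$ of their images, not vanishing.

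The step you are missing is precisely the content of the cited result \cite[Lemma A.3]{FerryLemma}: for a set contained in an $m$-dimensional $C^1$ manifold, a map into $\R$ that is $p$-Hölder with $p>1$ has image of zero $\cH^{m/p}$ measure, even though the map need not be constant. The paper's proof decomposes $E$ into an $\cH^m$-negligible set $E^0$ plus countably many pieces $E_h=E\cap M_h$ lying on $C^1$ manifolds, applies that lemma to each $E_h$, and disposes of $E^0$ by the same ball-covering computation you use for $N_0$ (that part of your proposal is correct and coincides with the paper's argument). So the one part of the lemma requiring a genuine idea — upgrading ``finite'' to ``zero'' on the $m$-dimensional pieces — is exactly the part your argument skips; to make your scheme work you would have to either quote that lemma or reprove it for each $E\cap g_i(\R^m)$.
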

\begin{proof}
    Since $E$ is countably rectifiable, there is an $\cH^m$-negligible set $E^0$ such that $E\setminus E^0$ is covered by countably many $m$-dimensional $C^1$ manifolds $M_h$. Writing $E_h= E\cap M_h$, we can apply \cite[Lemma A.3]{FerryLemma} to each $E_h$ to find
    \[
        \cH^\frac mp(f(E_h))=0.
    \]
    Moreover, for all $k>0$ there is a covering of $E^0$ with countably many balls $B_i(x_i,r_i)$ with $x_i\in E^0$ and of radii $r_i<1/k$ such that
    \[
        \sum_{i} r_i^m<\tfrac1k.
    \]
    Thus, setting for each $i$ the interval $I_i=(f(x_0)-Cr_i^p,f(x_0)+Cr_i^p)$, this is a covering of $f(E^0)$. As a consequence,
    \[
        \cH^{\frac m p}(f(E^0)) \le \sum_i C^\frac m p (r_i^p)^\frac m p =C^\frac mp\sum_i r_i^m< C^{\frac m p}\tfrac1k.
    \]
    It follows that $\cH^{\frac m p}(f(E^0))=0$, as we wanted.
\end{proof}
\begin{proof}[Proof of Theorem~\ref{teo:generic R4}]
    Define
    \[
        \tau(x)\coloneqq \sup\{t\in(0,T)\,:\, u(x,t)=0\}
    \]
    and denote by $\pi_x,\pi_t$ the projections on the space and time variables respectively.
    By definition of $\tau$ we have $\tau(x)=t$ if and only if $(x,t)\in\partial\{u>0\}$.
    Thus, if we denote $\cS=\pi_t(\Sigma)$ and $\cS^\infty=\pi_t(\Sigma^\infty)$ where $\Sigma^\infty$ is given by Theorem~\ref{teo:smooth covering}, we have
    \[
        \cS = \tau(\pi_x(\Sigma)),\quad \cS^\infty = \tau(\pi_x(\Sigma^\infty)),\quad \dim_\cH(\cS^\infty)=0.
    \]
    Setting $\Sigma^*=\Sigma\setminus\Sigma^\infty$, Theorem~\ref{teo:generic R4} follows if we show $\cH^1(\tau(\pi_x(\Sigma^*)))=0$.
    Given $N\in\N$ we set
    \[
        E_N \coloneqq \{x_0\in \pi_x\Sigma^* \, : \,\Sigma\cap\{|x-x_0|\le r,t\ge \tau(x_0)+Nr^2\}=\emptyset\}.
    \]
    By Proposition~\ref{prop:quadratic cleaning} we have $\pi_x(\Sigma^*)=\cup_{N\ge1}E_N$, hence it is enough to show
    \[
        \cH^1(\tau(E_N))=0\quad\forall N\ge1.
    \]
    Since $\pi_x(\Sigma_*)$ is countably $(n-2)$-rectifiable by Theorem~\ref{teo:smooth covering} and $E_N\subset \pi_X(\Sigma^*)$, we also have that $E_N$ is countably $(n-2)$-rectifiable.
    In addition, by definition we have
    \[
        \tau(x)\le \tau(y) + N|x-y|^2\quad\forall x,y\in E_N,
    \]
    which, by symmetry, yields
    \[
        |\tau(x)-\tau(y)|\le N|x-y|^2\quad \forall x,y\in E_N.
    \]
    Thus, we can apply Lemma~\ref{lem:FerryLemma} with $E=E_N$, $f=\tau$ and $m=n-2=p=2$ to find
    \[
        \cH^1(\tau(E_N))=0\quad \forall N\ge1
    \]
    as we wanted.
\end{proof}

\subsection{Quadratic cleaning at singular points}
We prove Proposition~\ref{prop:quadratic cleaning}. We will make use of the following result.
\begin{lem}[{\cite[Propositions 3.4 and 3.7]{FRS}}]\label{lem:estimates}
    Let $u\colon B_1\times[-1,1]\to[0,+\infty)$ be a bounded solution of the Stefan problem \eqref{eq:SP}. Then there is a constant $C>0$ such that
    \[
        \partial_{tt}u\ge -C\quad\text{in } B_{1/2}\times[-1/2,1/2]
    \]
    and
    \[
        (D^2u)_-\le C\partial_tu\quad \text{in } B_{1/2}\times[-1/2,1/2].
    \]
\end{lem}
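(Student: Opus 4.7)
The plan is to prove both one-sided second-derivative bounds by applying the parabolic maximum principle to auxiliary functions constructed from derivatives of $u$. The core structural input is that in the open set $\{u > 0\}$ the right-hand side $\chi_{\{u>0\}}$ of the Stefan equation is locally constant, so any spatial or temporal derivative of $u$ is caloric there; moreover, by monotonicity and non-negativity, $\partial_tu \equiv 0$ on the contact set $\{u = 0\}$, which gives favourable boundary data on $\partial\{u > 0\}$. Throughout, the interior $C^{1,1}_x\cap C^1_t$ bounds on $u$ coming from the larger domain $B_1\times[-1,1]$ (as in \cite{Caffarelli77}) will provide boundary bounds on the relevant auxiliary functions.

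For the bound $\partial_{tt}u \ge -C$, I would set $v := \partial_tu$ and consider $\psi := \partial_{tt}u + C$. Differentiating $\Heat u = \chi_{\{u>0\}}$ twice in $t$ gives $\Heat \psi = 0$ in $\{u > 0\}$. In the interior of $\{u = 0\}$, $v \equiv 0$ on a time interval, so $\partial_tv = 0$ and $\psi = C > 0$. The interior estimates bound $|\partial_{tt}u|$ on the parabolic boundary of $B_{1/2}\times[-1/2,1/2]$ by a constant depending only on $\|u\|_{L^\infty(B_1\times[-1,1])}$; choosing $C$ larger than this bound makes $\psi \ge 0$ on that parabolic boundary, and the maximum principle for the caloric $\psi$ in $\{u > 0\}$, combined with $\psi \ge 0$ on $\{u = 0\}$, yields $\psi \ge 0$ throughout.

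For the bound $(D^2u)_- \le C\partial_tu$, it suffices to show $\phi := \partial_{ee}u + C\partial_tu \ge 0$ for every unit vector $e$. Differentiating $\Heat u = \chi_{\{u>0\}}$ in $e$ and in $t$ shows $\Heat \phi = 0$ in $\{u > 0\}$. On $\{u = 0\}$, the minimum property of $u$ (since $u\ge 0$ vanishes there) gives $D^2u \ge 0$ a.e., so $\partial_{ee}u \ge 0$, and together with $\partial_tu = 0$ we obtain $\phi \ge 0$. As before, a boundary bound on $\phi$ from the interior estimates, combined with a large choice of $C$, ensures $\phi \ge 0$ on the parabolic boundary of $B_{1/2}\times[-1/2,1/2]$, and the maximum principle concludes.

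The main obstacle is making these arguments rigorous near the free boundary, where $\partial_{ee}u$ and $\partial_{tt}u$ can have distributional singularities reflecting the jumps of $\chi_{\{u > 0\}}$ across $\partial\{u>0\}$. The careful treatment requires either working with smooth approximations of $u$ (and passing to the limit) or interpreting the second derivatives weakly, together with exploiting the geometric monotonicity that the positivity set $\{u(\cdot,t) > 0\}$ expands in $t$ to ensure the free-boundary contributions to the distributional derivatives carry the correct sign. The full technical details are carried out in~\cite[Propositions~3.4 and~3.7]{FRS}.
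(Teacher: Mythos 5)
The paper offers no proof of this lemma: it is quoted verbatim from \cite[Propositions 3.4 and 3.7]{FRS}, so the only question is whether your sketch would stand as a proof, and as written it would not. The decisive gap is the boundary data for your maximum principles, not the distributional interpretation of second derivatives that you flag at the end. For the bound $\partial_{tt}u\ge -C$, you assert that interior estimates control $|\partial_{tt}u|$ on the parabolic boundary of $B_{1/2}\times[-1/2,1/2]$; they do not. The a priori regularity of solutions of \eqref{eq:SP} is $C^{1,1}$ in space and $C^{1}$ in time (see \eqref{eq:local C11 estimates}, which bounds $|D^2u|$ and $|\partial_t u|$ only), and no two-sided pointwise bound on $\partial_{tt}u$ is available anywhere in the cylinder --- the one-sided bound is precisely the content of the proposition. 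Since the parabolic boundary of the half cylinder meets the free boundary, and since on the free-boundary portion of $\partial\{u>0\}$ you would likewise need $\liminf \partial_{tt}u\ge -C$ from the positive phase, choosing ``$C$ larger than this bound'' assumes what is to be proved: the argument is circular. The same circularity affects the second estimate: on the part of the parabolic boundary of $B_{1/2}\times[-1/2,1/2]$ lying inside $\{u>0\}$ you need exactly $\partial_{ee}u\ge -C\partial_t u$, and because the claimed inequality is a ratio bound that degenerates where $\partial_t u\to 0$ (for instance approaching the free boundary, where $\partial_t u$ vanishes), no choice of a large constant converts $|\partial_{ee}u|\le C'$ and $\partial_t u\ge 0$ into the required sign on that boundary. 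Note also that $D^2u\ge0$ a.e.\ on $\{u=0\}$ only helps at interior density points of the contact set; the maximum principle on $\{u>0\}$ needs information on the free boundary itself, i.e.\ on limits of $D^2u$ from the positive phase, which is again nontrivial.

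This is why the actual proofs of these estimates in \cite{FRS} (in the tradition of Caffarelli--Friedman) do not run a maximum principle on $\{u>0\}$ intersected with an interior cylinder with presumed boundary bounds, but rely on comparison and approximation arguments exploiting $\partial_t u\ge0$ and the monotone expansion of $\{u>0\}$ (e.g.\ second-order incremental quotients in time, penalized approximations), with the localization built into the argument rather than imposed through boundary data. Citing \cite[Propositions 3.4 and 3.7]{FRS} as a black box --- which is exactly what the paper does --- is perfectly acceptable; presenting the sketch above as the proof is not, and the obstacle you identify (weak formulation of the second derivatives across the free boundary) is not the one that breaks it.
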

Given $G_n(x,t)$ the reverse heat kernel (see \eqref{eq:gaussian kernels}) and $f\colon\R^n\times(-1,0)$ we define
\[
    H(r,f) = \int_{\R^n} f^2(\cdot,-r^2)G(\cdot,-r^2)dx.
\]
\begin{lem}\label{lem:cleaning nondegeneracy dt}
    Let $u\colon B_1\times[-1,1]\to[0,+\infty)$ be a bounded solution of the Stefan problem \eqref{eq:SP}. For all $(x_0,t_0)\in\Sigma\cap B_{1/2}\times[-1/2,1/2]$ with blow-up $p_2$ there is $c_{x_0,t_0}$ such that, denoting by $e_n$ the direction of the maximal eigenvalue of $D^2p_2$,
    \[
        \fint_{C_r\cap\{|x_n|>r/10\}} \partial_t u\ge c_{x_0,t_0} r^{-2}H(r,\zeta (u-p_2))^{1/2}\quad \forall r\in(0,1/4).
    \]
\end{lem}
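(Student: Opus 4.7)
After translating to $(x_0,t_0)=(0,0)$ and rotating so that $e_n$ is a maximal-eigenvalue eigendirection of $D^2p_2$, the trace constraint $\tr A=1,\ A\ge 0$ forces $\lambda_{\max}\ge 1/n$, hence $p_2(x)\ge x_n^2/(2n)$ and $p_2\ge c_n r^2$ on $\Omega_r:=C_r\cap\{|x_n|>r/10\}$. Since $\{p_2=0\}\subset\{x_n=0\}$, Lemma~\ref{lem:closeness contactSet} gives $\{u(\cdot,t)=0\}\cap B_r\subset\{|x_n|\le Cr^2\}$ for all $t\ge-r^2$; hence for $r$ below a universal threshold $\Omega_r\subset\{u>0\}$, and $\partial_t u$ is strictly positive and caloric there. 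Setting $v=u-p_2$ and integrating $\partial_t u=\partial_t v$ in time reformulates the left-hand side as the purely spatial integral
\[
    \int_{\Omega_r}\partial_t u\,dx\,dt=\int_{B_r\cap\{|x_n|>r/10\}}\bigl[v(x,0)-v(x,-r^2)\bigr]dx.
\]

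I then pass to the parabolic rescaling $u_r(y,s):=r^{-2}u(ry,r^2 s)$, $v_r:=u_r-p_2$, $\zeta_r(y):=\zeta(ry)$; a direct change of variables gives
\[
    r^{-2}H(r,\zeta v)^{1/2}=\|\zeta_r v_r(\cdot,-1)\|_{L^2(\gamma_n)},\qquad \fint_{\Omega_r}\partial_t u=\fint_{C_1\cap\{|y_n|>1/10\}}\partial_s u_r,
\]
so the lemma reduces to a uniform lower bound of the latter average by $c_0\|\zeta_r v_r(\cdot,-1)\|_{L^2(\gamma_n)}$ for all $r<r_0$. I argue by contradiction along $r_k\to 0$, in the spirit of Step~3 of the proof of Proposition~\ref{prop:epiperimetric stefan}. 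By Lemma~\ref{lem:estimates} together with standard $L^\infty$ bounds, $u_{r_k}\to p_2$ in $C^{1,1}_{x,\loc}\cap C^{0,1}_{t,\loc}$, so $\delta_k:=\|\zeta_{r_k}v_{r_k}(\cdot,-1)\|_{L^2(\gamma_n)}\to 0$. The renormalized $f_k:=v_{r_k}/\delta_k$ satisfies $\|\zeta_{r_k}f_k(\cdot,-1)\|_{L^2(\gamma_n)}=1$, $\Heat f_k=-\delta_k^{-1}\chi_{\{u_{r_k}=0\}}$ (with source vanishing on $C_1\cap\{|y_n|>1/10\}$ by the positivity above), and $\fint\partial_s f_k\to 0$ on that set; a Caccioppoli estimate on Gaussian shells together with Lemma~\ref{lem:cpt} then yields a strong $L^2(\gamma_n)$ limit $f_\infty$ which is caloric off $\{y_n=0\}$, time-stationary on $\{|y_n|>1/10\}$, and non-trivial at $s=-1$.

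The contradiction is produced by the Green identity
\[
    -\int\zeta^2 v(\cdot,-r^2)G(\cdot,-r^2)\,dx=\int_{C_r}\zeta^2 G\,\chi_{\{u=0\}}\,dx\,dt+O(e^{-c/r^2}),
\]
which after rescaling and dividing by $\delta_k$ constrains the Gaussian mean of $f_\infty(\cdot,-1)$ together with the structure inherited from the concentration of $\chi_{\{u_{r_k}=0\}}$ near $\{y_n=0\}$; combined with time-stationarity, caloricity off the thin set, and Gaussian decay, this will force $f_\infty\equiv 0$, contradicting the normalization. The main obstacle is the behavior of $f_k$ across $\{y_n=0\}$: the source $\delta_k^{-1}\chi_{\{u_{r_k}=0\}}$ is supported in a slab of thickness $O(r_k)$ but has blowing-up amplitude, so one must carefully absorb it using the thinness of the contact set provided by Lemma~\ref{lem:closeness contactSet} and the integrated Caccioppoli bound, mirroring the technique used in Step~3 of Proposition~\ref{prop:epiperimetric stefan}.
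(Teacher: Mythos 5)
Your setup (reduction to a rescaled statement, contradiction along $r_k\to0$, normalization of $v_{r_k}$ by the Gaussian norm at time $-1$) is in the same family as the paper's argument, but the decisive step fails. The paper does \emph{not} show that the normalized limit vanishes for structural reasons coming from a Green identity: at lower-strata singular points the second blow-up $q=\lim v_{r_k}/H(r_k,\zeta(u-p_2))^{1/2}$ is in general a genuinely non-trivial quadratic caloric polynomial, and your list of properties of $f_\infty$ (caloric off $\{y_n=0\}$, time-stationary on $\{|y_n|>1/10\}$, a sign/mean constraint from the Green identity) does not exclude it. Concretely, $f_\infty(x)=x_1^2-x_2^2$ is caloric everywhere, time-independent, has zero Gaussian mean, and is perfectly compatible with the concentration of $\chi_{\{u_{r_k}=0\}}$ near $\{y_n=0\}$; so "this will force $f_\infty\equiv0$" is not a proof and, as stated, is false. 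What actually produces the contradiction in the paper is additional structural information: (i) by \cite[Proposition 6.7]{FRS} the limit is a non-trivial caloric quadratic polynomial $q=At+\tfrac\nu2\sum_{i>k}x_i^2+\sum_{i\le k}\tfrac{\nu_i}2x_i^2$ with $A\ge0$ and $\nu_i\le\nu$; (ii) the degeneracy hypothesis kills the time coefficient, $A=0$; (iii) the estimate $(D^2u)_-\le C\,\partial_tu$ of Lemma~\ref{lem:estimates}, applied in the directions where $\partial_{ii}p_2=0$, passes to the limit and gives $\nu_i\ge0$; then the caloricity (trace) identity \eqref{eq:caloric condition for q} forces $q\equiv0$, contradicting non-triviality. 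Your sketch never uses the second-derivative versus time-derivative estimate, which is exactly the ingredient that converts "degenerate $\partial_t$" into sign information strong enough to rule out harmonic limits such as $x_1^2-x_2^2$.

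There are also two smaller gaps. First, the compactness and, crucially, the \emph{non-triviality} of the limit of $f_k=v_{r_k}/\delta_k$ cannot be obtained from a Caccioppoli estimate plus Lemma~\ref{lem:cpt} alone: the normalization is taken at the single time $s=-1$, and controlling nearby times/scales (doubling of $H$, boundedness of the frequency, Gaussian tightness) is precisely the content of the frequency-monotonicity machinery behind \cite[Proposition 6.7]{FRS}, which you would need to invoke or reprove; the $C^{1,1}_x\cap C^{0,1}_t$ convergence you attribute to Lemma~\ref{lem:estimates} in fact comes from \eqref{eq:local C11 estimates}. Second, Lemma~\ref{lem:closeness contactSet} is stated for $(0,0)\in\Sigma_{n-1}$, whereas the case the lemma is really needed for is $\Sigma\setminus\Sigma_{n-1}$; the positivity of $u$ on $C_r\cap\{|x_n|>r/10\}$ for small $r$ is still true there, but it should be justified by the uniform convergence of the quadratic rescalings to $p_2$ (an $o(r)$ statement), not by quoting that lemma as stated.
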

\begin{proof}
     Since we need the result only for $(x_0,t_0)\in\Sigma\setminus \Sigma_{n-1}$ we will prove it only in this case. For points in $\Sigma_{n-1}$ it follows from~\cite[Proposition 8.4]{FRS}. We note, however, that the same argument used here, together with \eqref{eq:cubic blow-ups}, also applies to points in $\Sigma_{n-1}$.\\
     If by contradiction the claim is false, there is a subsequence $r_k\to0$ such that, setting $w_r = u(r\cdot,r^2\cdot)-r^2 p_2$,
     \begin{equation}\label{eq:degeneracy dt}
        \fint_{C_1\cap\{|x_n|>1/10\}} \frac{\partial_t w_{r_k}}{H(r_k,\zeta (u-p_2))^{1/2}} \to0.
     \end{equation}
     It follows from \cite[Proposition 6.7]{FRS} that, up to a subsequence,
     \[
        \frac{w_{r_k}}{H(r,\zeta (u-p_2))^{1/2}} \to q
     \]
     locally weakly in $H^1(\R^n\times(-\infty,0))$, where $q\not\equiv0$ is a quadratic caloric polynomial.
     In addition if we assume that up to a rotation $p_2 = \frac12\sum_{i=k+1}^n \mu_i x_i^2$ for some $\mu_i>0$ satisfying $\sum_{i=k+1}^n \mu_i=1$, then $q$ is of the form
     \[
        q(x,t) = At + \tfrac\nu2 \sum_{i=k+1}^n x_i^2 + \sum_{i=1}^k \tfrac{\nu_i}2x_i^2,
     \]
     where $A\ge0$, $\nu_i\le\nu$ satisfy
     \begin{equation}\label{eq:caloric condition for q}
        (n-k)\nu - A + \sum_{i=1}^k \nu_i = 0.
     \end{equation}
     However, \eqref{eq:degeneracy dt} yields $A=0$, so that $q$ is a non-zero quadratic harmonic polynomial.
     Moreover, since $\partial_{ii}p_2=0$ for all $1\le i\le k$, Lemma~\ref{lem:estimates} yields
     \[
        (\partial_{ii}w_{r_k})_-\le C \partial_t w_{r_k}.
     \]
     Dividing by $H(r_k,\zeta (u-p_2))^{1/2}$ and letting $k\to+\infty$ we find
     \[
        \partial_{ii}q=\nu_i\ge0
     \]
     for all $1\le i\le k$.
     Since $q\not\equiv0,\nu\ge\nu_i\ge0$ and $A=0$, this contradicts~\eqref{eq:caloric condition for q}.
\end{proof}

\begin{lem}[{cfr~\cite[Lemma 8.1]{FRS}}]\label{lem:cleaning dt barrier}
    Let $u\colon B_1 \times(-1,1) \to[0,\infty)$ be a solution of \eqref{eq:SP} and $(0,0)$ a singular point with first blow-up $p_2$. Assume that $e_n$ is an eigenvector for $D^2p_2$ with maximal eigenvalue, and that there exists $c>0$ such that
    \begin{equation}\label{eq:cleaning dt assumption}
        \fint_{C_r\cap\left\{|x_n|\ge \tfrac r{10}\right\}}\partial_tu \ge cr^{-2}H(r,\zeta (u-p_2))^{1/2}\quad \forall r\in(0,1).
    \end{equation}
Then there exists $C >0$ such that
\[
    \{u= 0\}\cap B_{r/2} \times[Cr^2,1) = \emptyset \quad\forall r\in(0,1).
\]
\end{lem}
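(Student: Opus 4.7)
The plan is to decompose $B_{r/2}$ into a ``thick region'' $\{|x_n|\ge r/10\}$, on which positivity of $u$ comes essentially for free from the blowup $p_2$, and a ``thin slab'' $\{|x_n|<r/10\}$, on which positivity must be forced by a barrier argument driven by the averaged nondegeneracy of $\partial_t u$. By parabolic rescaling it is enough to treat the case $r=1$ and to produce a universal $C$ with $u>0$ on $B_{1/2}\times[C,1)$.

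On the thick region positivity is immediate. Since $D^2p_2\ge 0$, $\operatorname{tr}(D^2p_2)=1$, and $e_n$ is a maximal eigendirection of $D^2p_2$, the maximal eigenvalue is at least $1/n$, so $p_2(x)\ge c|x_n|^2$ for some dimensional $c>0$. The first-order expansion $u(x,0)=p_2(x)+o(|x|^2)$ coming from $(0,0)\in\Sigma$ with blowup $p_2$, together with the monotonicity $\partial_t u\ge 0$, then yields $u>0$ on $B_1\cap\{|x_n|\ge 1/10\}$ for every $t\ge 0$.

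For the thin slab I would first convert the averaged lower bound $\fint_{C_1\cap\{|x_n|\ge 1/10\}}\partial_t u \ge c\,H(1,\zeta(u-p_2))^{1/2}$ into a pointwise statement. Pigeonholing against the $L^\infty$ bound on $\partial_t u$ gives a set of positive measure on which $\partial_t u \gtrsim H^{1/2}$; the bound $\partial_{tt}u\ge -C$ from Lemma~\ref{lem:estimates} propagates this pointwise lower bound to a short forward time interval; integrating in time then yields $u\gtrsim H$ on a ball inside the thick region at some small positive time. This pointwise bound serves as data for a comparison between $u$ and an explicit subsolution $\phi$ of the Stefan problem in the slab, chosen so that $\Heat\phi\le\chi_{\{\phi>0\}}$, $\phi\le u$ on the parabolic boundary of the slab (matching in particular the lower bound just produced on the thick faces), and $\phi(x,C)>0$ on $B_{1/2}\cap\{|x_n|<1/10\}$ for $C$ sufficiently large. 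Combining this with the thick-region step concludes the proof.

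The main technical obstacle is the precise construction of $\phi$: in the obstacle problem the naive parabolic comparison principle holds only on the positivity set $\{u>0\}$, and verifying that $\phi\le u$ persists across the free boundary of $u$ requires care. The strategy I would follow for this comparison is precisely that of~\cite[Lemma~8.1]{FRS}, which handles the analogous quantitative cleaning statement.
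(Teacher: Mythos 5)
There is a genuine quantitative gap in your thin-slab step, and it is precisely where the lemma's content lies. First, you never establish the deficit bound $u\ge p_2-C_1H(r,\zeta(u-p_2))^{1/2}$ in $B_r\times[-r^2/2,1)$, which in the paper follows from the fact that $u-p_2$ is supercaloric ($\Heat(u-p_2)=-\chi_{\{u=0\}}\le0$) and nondecreasing in time, combined with the $L^2$--$L^\infty$ estimate of Lemma~\ref{lem:parabolic ABP}; without it you have no control of how negative $u-p_2$ can be on the slab, only the qualitative $o(r^2)$, which is \emph{not} comparable to $H^{1/2}$ (indeed $H^{1/2}=o(r^2)$), so there is no scale at which your barrier's boundary data can be matched to $u$. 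Second, your pigeonhole-plus-$\partial_{tt}u\ge-C$ scheme produces the wrong size of surplus: pigeonholing \eqref{eq:cleaning dt assumption} against $\|\partial_t u\|_{L^\infty}$ only gives $\partial_t u\gtrsim r^{-2}H^{1/2}$ on a set of measure $\sim H^{1/2}$, the semiconvexity bound preserves this only for a time of length $\sim H^{1/2}$, and integrating in time gains only $\sim H$ for $u$. A gain of order $H$ cannot beat a deficit of order $H^{1/2}$, so no barrier can carry it across the slab. The paper's mechanism is different: since the contact set is confined to $\{|x_n|\le r\delta^2\}$ (by the deficit bound and the maximality of $e_n$), the nonnegative function $\partial_t u$ is \emph{caloric} on the thick region, so the parabolic Harnack inequality upgrades the average \eqref{eq:cleaning dt assumption} to the pointwise bound $\partial_t u\gtrsim r^{-2}H^{1/2}$ on all of $B_r\cap\{|x_n|>r\delta\}$; the bound $\partial_{tt}u\ge-C$ of Lemma~\ref{lem:estimates} then propagates it over a time interval of length $\sim r^2$ (using $r^{-2}H^{1/2}\gtrsim r$), and integrating over that interval gains $\sim H^{1/2}$, i.e.\ $u\ge p_2+C_1H^{1/2}$ on the thick region, which does dominate the deficit $-C_1H^{1/2}$.

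Your worry about the comparison principle across the free boundary is also misplaced, and resolving it the way you suggest (a subsolution of the obstacle problem with $\Heat\phi\le\chi_{\{\phi>0\}}$) is unnecessary: the paper compares $u$ with $\psi=p_2+C_1H^{1/2}(-1+h_\delta)$ where $h_\delta$ is caloric, so $\Heat\psi=1\ge\chi_{\{u>0\}}=\Heat u$ holds on all of $B_r$, $u-\psi$ is globally supercaloric, and the ordinary maximum principle applies with no positivity-set issue. Finally, the reduction ``by scaling it suffices to take $r=1$ with a universal $C$'' glosses over the fact that the hypothesis couples all scales through $H(r,\zeta(u-p_2))$; the argument should (as in the paper) be run at each scale $r$ directly, with all quantities measured in units of $r^{-2}H(r,\zeta(u-p_2))^{1/2}$.
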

\begin{proof}
    The proof is the same as \cite[Lemma 8.1]{FRS}.
    Since $\Heat(u-p_2) =-\chi_{\{u=0\}}\le0$, the function $u-p_2$ is supercaloric.
    Also, since $\partial_tu\ge0$, then $u-p_2$ is nondecreasing in time. Thus, Lemma~\ref{lem:parabolic ABP} yields
    \begin{equation}\label{eq:cleaning proof 1}
        u\ge p_2-C_1H(r,\zeta(u-p_2))^{1/2}\quad \text{in }B_r\times[-r^2/2,1).
    \end{equation}
    Since $H(r,\zeta(u-p_2))^{1/2} = o(r^2)$ and $e_n$ is an eigenvector of $D^2p_2$ with maximal eigenvalue, for any fixed $\delta>0$ small, we obtain
    \[
        \{u= 0\}\cap B_r \times[-r^2/2,1) \subset\{|x_n|\le r\delta^2\} \quad\forall r\ll1.
    \]
    Thus $\Heat(\partial_tu) = 0$ inside $B_r \cap\{|x_n|>r\delta^2\} \times[-r^2/2,1)$,
    and therefore \eqref{eq:cleaning dt assumption} and Harnack inequality imply that $\partial_tu(\cdot,-r^2/4) \ge2c_2r^{-2}H(r,\zeta(u-p_2))^{1/2}$ inside $B_r \cap\{|x_n|>r\delta\}$, for some $c_2 = c_2(n,\delta) >0$. Combining this bound with the estimate $\partial_{tt}u\ge-C$ (see Lemma~\ref{lem:estimates}), we get
    \begin{equation}\label{eq:clening proof 2}
        \partial_tu\ge c_2r^{-2}H(r,\zeta(u-p_2))^{1/2}\quad\text{in }B_r\cap\{|x_n|>r\delta\} \times[-r^2/4,c_3r]
    \end{equation}
    for some $c_3 >0$ (recall that $r^{-2}H(r,\zeta(u-p_2))^{1/2}\ge Cr$).
    In particular, combining \eqref{eq:cleaning proof 1} and \eqref{eq:clening proof 2}, we obtain for all $h\in[0,c_3r]$
    \[
        u(\cdot,-r^2/4 + h) \ge p_2-C_1H(r,\zeta(u-p_2))^{1/2} + c_2r^{-2}H(r,\zeta(u-p_2))^{1/2} h\quad\text{in }B_r \cap\{|x_n|>r\delta\}.
    \]
    Choosing $h\ge r^2/4+2C_1c_2^{-1}r^2$ and using again \eqref{eq:cleaning proof 1} we obtain
    \begin{equation}\label{eq:cleaning proof 3}
        u\ge p_2 + C_1H(r,\zeta(u-p_2))^{1/2}(-1 + 2\chi_{\{|x_n|>r\delta\}})\quad \forall(x,t) \in B_r \times[2C_1c_2^{-1}r^2,1)
    \end{equation}
    Now, let $h_\delta$ be the solution to
    \[\begin{cases}
            \Heat h_\delta = 0  &\text{in }B_1\times(0,\infty),\\
            h_\delta = 2        &\text{on }\partial B_1\cap\{|x_n>\delta\}\times[0,\infty),\\
            h_\delta=0          &\text{on }\partial B_1\cap\{|x_n|<\delta\}\times[0,\infty),\\
            h_\delta=0          &\text{at }t=0.
    \end{cases}\]
    Since $h_\delta \to2$ as $\delta\to0$, it follows that $h_\delta\ge3/2$ inside $B_{1/2}$ for all $t\ge1$, provided $\delta$ is small enough.
    Now we can observe that
    \[
        \psi(x,t) \coloneqq p_2(x) + C_1H(r,\zeta(u-p_2))^{1/2}\left(-1 + h_\delta\left(\frac xr,\frac{t-2C_1c_2^{-1}r^2}{r^2}\right)\right)
    \]
    satisfies $\Heat \psi = 1$ in $B_r\times[2C_1c^{-1}_2 r^2,\infty)$ and, by \eqref{eq:cleaning proof 3}, we have $u \ge\psi$ on the parabolic boundary $\partial_\pb B_r \times[2C_1c^{-1}_2 r^2,1)$.
    Hence, by the maximum principle, $u\ge\psi$ in $B_r$, for $t\ge2C_1c^{-1}_2r^2$. Evaluating at $t= 2C_1c^{-1}_2r^2+ r^2$ (and using that $h_\delta \ge3/2$ in $B_{1/2}$ for all $t\ge1$) we obtain
    \[
        u\ge \psi \ge p_2 + \tfrac{C_1}2H(r,\zeta(u-p_2))^{1/2} >0\quad \text{in }B_{r/2} \quad\text{for }t\ge (2C_1c_2^{-1}+1)r^2.
    \]
    and the result follows.
\end{proof}
\begin{proof}[Proof of Proposition~\ref{prop:quadratic cleaning}]
    For $(x_0,t_0)\in\Sigma\setminus\Sigma_{n-1}$ it is an immediate consequence of Lemma~\ref{lem:cleaning dt barrier} and \ref{lem:cleaning nondegeneracy dt}. For $(x_0,t_0)\in\Sigma_{n-1}$ it follows from \eqref{eq:cleaning} with $k=3$, or simply from \cite[Proposition 8.3]{FRS}.
\end{proof}
\bibliographystyle{aomalpha}
\bibliography{stefan}
\end{document}